\newtheorem*{thm*}{Theorem}
\newtheorem*{conj*}{Conjecture}
\newtheorem*{remark}{Remark}
\newtheorem*{ThreeRemarks}{Three Remarks}
\newtheorem*{FiveRemarks}{Five Remarks}
\newtheorem{thm}{Theorem}[section]
\newtheorem{lem}{Lemma}[section]
\newtheorem{prop}[thm]{Proposition}
\newtheorem*{example}{Example}
\newcommand{\ord}{\mathrm{ord}}
\newcommand{\im}{\rm {Im}}
\newcommand{\Z}{\mathbb{Z}}
\newcommand{\Q}{\mathbb{Q}}
\newcommand{\F}{\mathbb{F}}
\newcommand{\leg}[2]{\genfrac{(}{)}{}{}{#1}{#2}}
\renewcommand\expandafter\subsection\expandafter{%
    \expandafter\@fb@secFB\subsection
  }%
\numberwithin{equation}{section}
\begin{document}
\title[Variants of Lehmer's speculation for newforms]{Variants of Lehmer's Speculation for newforms}
\author{Jennifer S. Balakrishnan, William Craig,  Ken Ono, and Wei-Lun Tsai}
\address{Department of Mathematics and Statistics, Boston University,
Boston, MA 02215}
\email{jbala@bu.edu}
\address{Department of Mathematics, University of Virginia, Charlottesville, VA 22904}
\email{wlc3vf@virginia.edu}
\email{ken.ono691@virginia.edu}
\email{tsaiwlun@gmail.com}

\thanks{The first author acknowledges the support of the NSF (DMS-1702196), the Clare Boothe Luce Professorship
(Henry Luce Foundation), a Simons Foundation grant (Grant \#550023), and a Sloan
Research Fellowship. The third author thanks the support of the Thomas Jefferson Fund and the NSF
(DMS-1601306 and DMS-2055118).}
\keywords{Modular forms, Lehmer's Conjecture}

\begin{abstract} In the spirit of 
Lehmer's unresolved speculation on the nonvanishing of Ramanujan's tau-function,
it is natural to ask whether a fixed integer $\alpha$ is a value of $\tau(n)$ or is a
Fourier coefficient $a_f(n)$ of any given newform $f(z)$. We offer a method, which applies to newforms with integer coefficients and trivial residual mod 2 Galois representation, that answers this question for odd $\alpha$.
We determine infinitely many spaces for which the ordinary primes $3\leq \ell\leq 37$  are not  absolute values of coefficients of newforms with integer coefficients, and we obtain many explicit examples for $\tau(n)$.
We also obtain sharp lower bounds for the number of prime factors of such newform coefficients. 
 In the weight aspect, for powers of odd ordinary primes $\ell$, we prove that 
$\pm \ell^m$ is not a coefficient
of any such newform  $f$ with weight $2k>M^{\pm}(\ell,m)$ and even level coprime to $\ell,$ where $M^{\pm}(\ell,m)$ are effectively computable constants that are $O_{\ell}(m).$
\end{abstract}
\maketitle
\section{Introduction and statement of results}

In a paper innocently entitled ``On certain arithmetical functions,'' Ramanujan introduced his tau-function, whose values are the coefficients of the
weight 12 modular form (note: $q:=e^{2\pi i z}$ where $\im(z)>0$)
\begin{equation}
\Delta(z)=\sum_{n=1}^{\infty}\tau(n)q^n:=q\prod_{n=1}^{\infty}(1-q^n)^{24}=q-24q^2+252q^3-1472q^4+4830q^5-\cdots.
\end{equation}
These coefficients have served as a prototype and testing ground for important phenomena in the theory of modular forms.
Their multiplicative properties offered hints of the theory of Hecke operators. Ramanujan's conjectured bounds on their size are famous corollaries of
Deligne's proof of the Weil Conjectures. Furthermore, Ramanujan offered congruences \cite{RamanujanUnpublished, Ramanujan, SerreRamanujan}, such as
\begin{equation}\label{Cong691}
\tau(n)\equiv \sum_{1\leq d\mid n}d^{11}\pmod{691},
\end{equation}
that Serre  \cite{SerreRamanujan} later viewed as glimpses of the theory of modular $\ell$-adic Galois representations.

Despite these important roles, some of the function's most basic properties remain unknown. For example, Lehmer's speculation\footnote{``Lehmer's Conjecture'' is the assertion that $\tau(n)$ never vanishes. To our knowledge, he never formulated such a conjecture, and so we refer to his question as his speculation.}  that $\tau(n)$ never vanishes remains open. Lehmer proved \cite{Lehmer} that if $\tau(n)$ ever vanishes, then there is a prime $p$ for which $\tau(p)=0.$  Using the Chebotarev Density Theorem, Serre \cite{SerreCheb}  established a quantitative result that implies that the set of such primes $p$ (if any)
 has density zero within the primes. Serre's estimate, which holds for weight $\geq 2$ newforms without complex multiplication,  has been improved several times, and  thanks to work by Thorner and Zaman \cite{ThornerZaman} it is now known that
 $$
 \# \{ p\leq	X \ {\text {\rm prime}}\ : \ \tau(p)=0\} \ll \pi(X)\cdot \frac{(\log \log X)^2}{\log X},
 $$
 where $\pi(X)$ is the usual prime counting function.
 Recent work by Calegari and Sardari \cite{CalegariSardari} considers a different aspect; they establish that
 at most finitely many non-CM newforms with fixed tame $p$ level $N$ have vanishing $p$th Fourier coefficient.
 
 We consider a variation of Lehmer's original speculation that has also been the focus of study. For an  odd integer $\alpha$,
 Murty, Murty, and Shorey \cite{MMS} proved (see \cite{MM} for a generalization) that
 $\tau(n)=\alpha$ for at most finitely many $n$. 
 Due to the enormous bounds that  arise in the theory of linear forms in logarithms (the crux of their method), the classification
 of such $n$ has not been carried out for any $\alpha\neq \pm 1$. For $\alpha=\pm \ell$, where $\ell$ is almost any odd prime,
 it is widely believed that there are  no solutions.  However, there are counterexamples, such as
Lehmer's prime value example \cite{LehmerMonthly}
\begin{equation}\label{LehmerPrime}
\tau(251^2)=-80561663527802406257321747.
\end{equation}
\noindent 
Lygeros and Rozier \cite{LR} have subsequently discovered further prime values.

We investigate these questions for even weight newforms with integer coefficients and trivial mod 2 residual Galois representation (i.e. even Hecke eigenvalues for $T(p)$ for primes $p\nmid 2N$, where $N$ is the level).
We obtain a general theorem (see Theorem~\ref{LehmerVariantGeneral}) that theoretically locates those coefficients that are odd prime powers in absolute value for such newforms. For $\tau(n)$, this theorem gives the following criterion, which restricts arguments to explicit finite sets.

\begin{thm}\label{LehmerVariation}
Suppose that $\ell$ is an odd prime for which $\ell \nmid \tau(\ell).$ If $\tau(n)=\pm\ell^m$, with $m\in \Z^{+},$ then $n=p^{d-1},$ where $p$ and $d\mid \ell( \ell^2-1)$
are odd primes. Furthermore,  $\tau(n)=\pm \ell^m$ for at most finitely many $n$.
\end{thm}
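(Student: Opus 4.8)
The plan is to exploit the multiplicativity of $\tau$, the factorization of $\tau$ at prime powers into cyclotomic Lucas values, and the Bilu--Hanrot--Voutier theorem on primitive prime divisors. First I would extract the parity information. Because $\Delta$ has trivial residual mod $2$ representation, $\tau(p)$ is even for every prime $p$, and the classical congruence $\sum_n \tau(n)q^n \equiv \sum_{j\ge 0} q^{(2j+1)^2}\pmod 2$ shows that $\tau(n)$ is odd exactly when $n$ is an odd square. Since $\pm\ell^m$ is odd, $n$ must be an odd square: writing $n=\prod_i p_i^{a_i}$, every $p_i$ is odd and every $a_i$ is even. Multiplicativity then gives $\tau(n)=\prod_i \tau(p_i^{a_i})=\pm\ell^m$, so each factor satisfies $\tau(p_i^{a_i})=\pm\ell^{b_i}$ with $\sum_i b_i=m$; in particular each individual prime-power coefficient is itself $\pm$ a power of the single prime $\ell$.

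Next I would analyze a single prime-power coefficient. Writing $\alpha,\beta$ for the roots of $X^2-\tau(p)X+p^{11}$, Deligne's bound makes them complex conjugates of absolute value $p^{11/2}$, so the sequence $U_{k}:=(\alpha^{k}-\beta^{k})/(\alpha-\beta)$ is a nondegenerate Lucas sequence with $\tau(p^a)=U_{a+1}$, and one has the cyclotomic factorization
\[
\tau(p^a)=\frac{\alpha^{a+1}-\beta^{a+1}}{\alpha-\beta}=\prod_{\substack{c\mid a+1\\ c>1}}\Phi_c(\alpha,\beta),
\]
a product of rational integers. By Bilu--Hanrot--Voutier, outside finitely many explicit indices each $\Phi_c(\alpha,\beta)$ carries a primitive prime divisor, and any such divisor is $\equiv\pm1\pmod c$. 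If $\tau(p^a)=\pm\ell^{b}$, then every prime factor of every $\Phi_c(\alpha,\beta)$ equals $\ell$; since a prime can be primitive for at most one index, at most one cyclotomic factor can be a nontrivial power of $\ell$. This forces $d:=a+1$ to be prime (so $\tau(p^a)=\Phi_d(\alpha,\beta)$) and simultaneously excludes $\tau(p^a)=\pm1$ for $a\ge 2$. The primitive-versus-intrinsic dichotomy for $\Phi_d$ then yields either $\ell\equiv\pm1\pmod d$ (primitive case) or $d=\ell$ (intrinsic case), that is $d\mid\ell(\ell^2-1)$, and $d$ is odd because $a$ is even. The finitely many small indices falling outside the Bilu--Hanrot--Voutier range I would dispatch using the explicit tables of exceptional Lucas sequences together with Deligne's bound.

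Feeding this back through the multiplicative decomposition forces each surviving factor $\tau(p_i^{a_i})$ into the shape $\pm\ell^{b_i}$ with $b_i\ge 1$ and $a_i+1$ an odd prime dividing $\ell(\ell^2-1)$, after which one must still rule out the presence of two distinct prime-power factors. Here the point is that two coprime Lucas values of the form $\Phi_{d_1}(\alpha_{p},\beta_{p})$ and $\Phi_{d_2}(\alpha_{q},\beta_{q})$ cannot simultaneously be exact powers of the \emph{same} prime $\ell$; I would derive the needed incompatibility from the primitive-divisor congruences $\ell\equiv\pm1\pmod{d_i}$ (which already bound $d_i\le \ell+1$) combined with the effective finiteness below, thereby collapsing $n$ to a single prime power $p^{d-1}$ of the asserted form, with $p$ odd.

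Finally, for the finiteness assertion I would fix one of the finitely many admissible odd primes $d\mid\ell(\ell^2-1)$ and study $\tau(p^{d-1})=\Phi_d(\alpha,\beta)=\pm\ell^m$ as $p$ and $m$ vary. Since $\phi(d)\ge 2$ for $d\ge 3$, this is a genuinely nondegenerate polynomial–exponential equation in the two unknowns $p$ and $\ell^m$, and its finiteness follows from Baker's theory of linear forms in logarithms exactly as in the argument of Murty--Murty--Saradha for $\tau(n)=\alpha$; equivalently one reduces to finitely many Thue--Mahler equations. I expect this Diophantine input, together with the exclusion of multiple prime factors and the bookkeeping of the small exceptional Lucas sequences in Bilu--Hanrot--Voutier, to be the genuine obstacle, whereas the parity reduction and the multiplicative decomposition are routine.
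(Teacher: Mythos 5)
Your skeleton is essentially the paper's: parity forces $n$ to be an odd square, Hecke multiplicativity reduces to prime-power coefficients, $\tau(p^{a})=u_{a+1}(\alpha_p,\beta_p)$ is a nondegenerate Lucas number by Deligne, and the Bilu--Hanrot--Voutier theorem together with the classification of defective terms compatible with $|A|\leq 2\sqrt{B}$, $B=p^{11}$ (the paper's Lemma~\ref{Part1DefectivePrimality}, extracted from the BHV--Abouzaid tables) forces $\pm\ell^m$ to be the \emph{first} multiple of $\ell$ in the sequence, whence $d=a+1$ is an odd prime equal to the first-occurrence index; your primitive/intrinsic dichotomy for $\Phi_d$ ($\ell\equiv\pm1\pmod d$ or $d=\ell$) is exactly Proposition~\ref{PropB}, and your cyclotomic-factorization phrasing is equivalent to the paper's use of Proposition~\ref{PropA} plus Theorem~\ref{Bilu}. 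The one tool-level difference is the finiteness assertion: since $m$ is \emph{fixed} in the statement, the paper needs only Siegel's theorem applied to the curves produced by Lemma~\ref{DiophantineCriterion} (integral points on $Y^2=X^{11}\pm\ell^m$, on $Y^2=5X^{22}\pm4\ell^m$, and on the Thue equations $F_{d-1}(X,Y)=\pm\ell^m$); your Baker/Thue--Mahler route \`a la Murty--Murty--Saradha is valid and effective but heavier than necessary here --- it is precisely the machinery the paper postpones to Theorem~\ref{Power}, via \cite{BW} and \cite{TW}.

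The genuine defect is your proposed exclusion of two distinct prime-power factors. The congruences $\ell\equiv\pm1\pmod{d_i}$ (or $d_i=\ell$) are conditions on $\ell$ and the indices $d_i$ alone; they hold independently within each Lucas sequence and carry no information linking the sequences attached to distinct primes $p\neq q$ --- nothing in them forbids, say, $\tau(p^2)$ and $\tau(q^2)$ from simultaneously being powers of the same $\ell$ with $d_1=d_2=3$, and ``effective finiteness'' of each individual equation likewise cannot produce mutual exclusion. No such incompatibility lemma exists in the paper, nor is one available: the paper instead gets the shape of $n$ from the multiplicative reduction together with Proposition~\ref{One} (for $\Delta$ one has $\mathcal{U}_f=\{1\}$, so $|\tau(p^a)|>1$ for every prime power $p^a>1$), so every prime-power factor of $n$ contributes $\ell^{b_i}$ with $b_i\geq 1$ and $a_i=d_i-1$ of the stated form, giving $\omega(n)\leq m$; when $m=1$ --- the only case fed into Theorems~\ref{Lehmer135} and \ref{LehmerGeneral} --- the collapse to $n=p^{d-1}$ is then immediate from multiplicativity. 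You should therefore delete the claimed congruence-based incompatibility and present the argument factorwise, as the paper's proof of Theorem~\ref{LehmerVariantGeneral} does, with uniqueness of the prime automatic in the $m=1$ case.
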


Theorem~\ref{LehmerVariation} offers a method for determining whether $|\tau(n)|=\ell^m$ has any solutions, which reduces the
problem to the determination of certain integer points on finitely many algebraic curves.  For $\ell \in \{3, 5, 7\},$ 
examples of these curves include
\begin{equation}\label{HyperEquations}
 Y^2-X^{11}=\pm 3^m,\ \ \ \ \ 
 Y^2-5X^{22}=\pm 4\cdot 5^m \ \ \ \ {\text {\rm and}}\ \ \ \ Y^{3}-5XY^2+6X^2Y-X^3=\pm 7^m.
\end{equation}
By classifying such points when $m=1$, we obtain 
the following theorem.\footnote{The {\it Journal of Number Theory} published the proceedings of the conference ``Modular forms and Drinfeld Modules'' held in 2018 in Pisa, Italy. Paper \cite{PaperJNT}  is an exposition of the third author's  lecture at the conference, and pertains to some of the cases  of Theorem~\ref{Lehmer135} (1). All of the other results in the present paper have not appeared elsewhere. This article is the main reference for the authors' work on variants of Lehmer's speculation.}

\begin{thm}\label{Lehmer135}
For every $n>1$, the following are true. 

\noindent
(1) We have that
$$\tau(n)\not \in \{\pm 1, \pm 3, \pm 5, \pm 7, \pm 13, \pm 17, -19, \pm 23,  \pm 37, \pm 691\}.$$

\noindent
(2)  Assuming the Generalized Riemann Hypothesis, we have that
 $$\tau(n)\not \in 
\left \{ \pm  \ell\ : \ 41\leq  \ell\leq 97  \  {\text {\rm with}}\ \leg{\ell}{5}=-1\right\} \cup
\left \{ -11, -29, -31, -41, -59, -61, -71, -79, -89\right\}.
 $$
\end{thm}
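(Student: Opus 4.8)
The plan is to derive Theorem~\ref{Lehmer135} from the reduction in Theorem~\ref{LehmerVariation} by making the associated equations explicit and then classifying their relevant integral points. Fix an odd prime $\ell$ appearing in one of the two lists and suppose $\tau(n)=\pm\ell$ for some $n>1$; this is the case $m=1$. By Theorem~\ref{LehmerVariation}, $n=p^{d-1}$ for odd primes $p$ and $d\mid \ell(\ell^2-1)$, so the first step is, for each such $\ell$, to enumerate the finitely many admissible $d$ (the odd prime divisors of $\ell(\ell^2-1)$) and, for each, to express $\tau(p^{d-1})$. Using the Hecke recursion $\tau(p^{k+1})=\tau(p)\tau(p^k)-p^{11}\tau(p^{k-1})$ one has $\tau(p^{d-1})=t^{(d-1)/2}\,U_{d-1}\!\bigl(\tau(p)/(2\sqrt{t})\bigr)$ with $t=p^{11}$ and $U_{d-1}$ the Chebyshev polynomial of the second kind; since $d$ is odd this is a binary form $\Phi_d(\tau(p)^2,p^{11})$ of degree $(d-1)/2$. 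The equation to solve is then $\Phi_d(\tau(p)^2,p^{11})=\pm\ell$.

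For the smallest admissible $d$ these are exactly the curves in \eqref{HyperEquations}. When $d=3$ one gets $\Phi_3=\tau(p)^2-p^{11}$, i.e.\ the hyperelliptic curve $Y^2-X^{11}=\pm\ell$ with $(X,Y)=(p,\tau(p))$. When $d=5$ the binary quadratic form $\Phi_5=\tau(p)^4-3\tau(p)^2p^{11}+p^{22}$ has discriminant $5$, and completing the square via $4\Phi_5=(2\tau(p)^2-3p^{11})^2-5p^{22}$ yields $Y^2-5X^{22}=\pm 4\cdot 5$; this case arises only when $5\mid \ell^2-1$, that is when $\leg{\ell}{5}=1$, which is precisely why the GRH-conditional list in part (2) is restricted to $\leg{\ell}{5}=-1$, as this removes the difficult genus-$10$ curve. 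When $d=7$ the cubic form $\Phi_7=Y^3-5XY^2+6X^2Y-X^3$, with $(X,Y)=(p^{11},\tau(p)^2)$, splits over the cubic subfield of $\Q(\zeta_7)$ and gives the Thue equation $\Phi_7=\pm 7$; in general $\Phi_d=\pm\ell$ is a Thue equation attached to the totally real field $\Q(\zeta_d)^{+}$ of degree $(d-1)/2$.

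The second step is to classify integral points. For the Thue equations ($d\geq 7$) I would invoke a standard Thue solver, and for the hyperelliptic curves ($d=3,5$) carry out a descent to compute the Mordell--Weil group of the Jacobian, followed by Chabauty--Coleman together with a Mordell--Weil sieve. Among the finitely many solutions one retains only those with $X=p$ prime and $\tau(p)$ a genuine Hecke eigenvalue, discarding the rest using the Deligne bound $|\tau(p)|\leq 2p^{11/2}$ and elementary congruences; this establishes the prime-power entries of both lists. The values $\pm 1$ and $\pm 691$, which are not of the form $\ell^m$ with $m\geq 1$, are treated separately: $\tau(n)$ being odd forces $n$ to be an odd square, and the Eisenstein congruence \eqref{Cong691} gives $\sigma_{11}(n)\equiv \tau(n)\pmod{691}$, which rules out $\pm 1$ and $\pm 691$ directly.

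The main obstacle is the point classification on the higher-genus curves. Chabauty--Coleman requires the Mordell--Weil rank of the Jacobian to be strictly less than the genus, and for the degree-$11$ and degree-$22$ hyperelliptic models this is delicate and may force elliptic-curve Chabauty over the splitting of the endomorphism algebra, or a carefully designed sieve. Moreover, the descent and the Thue resolution both rest on computing class groups, units, and regulators of number fields whose discriminants grow with $\ell$, and for the larger primes in part (2) these computations are only rigorous under GRH, which is exactly why that part is conditional. Finally, the asymmetry between $+\ell$ and $-\ell$ in the lists (for instance $-19$ but not $+19$, or $-11$ but not $+11$) reflects that the sign on the right-hand side of $\Phi_d=\pm\ell$ changes the curve, so one sign may admit an excluded point while the other does not.
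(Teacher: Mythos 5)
Your overall architecture is the paper's: reduce via Theorem~\ref{LehmerVariation} to $n=p^{d-1}$, convert each admissible $d$ into a Diophantine problem exactly as in Lemma~\ref{DiophantineCriterion} ($Y^2=X^{11}\pm\ell$ for $d=3$, the discriminant-$5$ completion $Y^2=5X^{22}\pm4\ell$ for $d=5$, Thue equations $F_{d-1}=\pm\ell$ for $d\geq 7$), classify integral points, and discard solutions with $X$ not prime. But three concrete gaps remain. The most serious is your dispatch of $\pm 1$ and $\pm 691$: the congruence $\tau(n)\equiv\sigma_{11}(n)\pmod{691}$ rules out neither value. For $n=p^2$ with $p^{11}\equiv -1\pmod{691}$ one gets $\sigma_{11}(n)\equiv 1$, perfectly consistent with $\tau(n)=1$, and $\sigma_{11}(n)\equiv 0\pmod{691}$ holds for many $n$, consistent with $\tau(n)=\pm 691$ (note also that $691$ is prime, so it \emph{is} of the form $\ell^m$ and falls under the main reduction). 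The paper eliminates $\pm 1$ via the Lucas machinery (Proposition~\ref{One}: $\tau(p)$ is even, there are no defective terms, and primitive prime divisors exist), and the case $\pm 691$ is the heaviest part of the whole proof: the congruence is used only to compute $m_{691}(p)$ and pin the exponent to $\{2,4,22,690\}$, after which one must still settle $C_{6,691}^{\pm}$ and $H_{11,691}^{\pm}$ (Chabauty--Coleman on rank-zero genus-$5$ Jacobians, plus the Lebesgue--Ramanujan--Nagell-to-Thue algorithm producing enormous degree-$11$ and degree-$22$ Thue equations), $F_{22}=\pm 691$, and the degree-$345$ equation $F_{690}=\pm 691$ (handled via Cor.~6.6 of \cite{BHV} and continued fractions). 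None of this work appears in your proposal.

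Second, by skipping the Ramanujan congruences mod $9$, $5$, $7$, $691$ — which, combined with the ``first multiple of $\ell$'' property forced by primitive divisors, determine the admissible exponent exactly — your enumeration of all odd primes $d\mid\ell(\ell^2-1)$ includes cases the paper never faces, most notably $d=173$ for $\ell=691$ (as $173\mid 692$), i.e., the degree-$86$ Thue equation $F_{172}(X,Y)=\pm 691$, to which Cor.~6.6 of \cite{BHV} does not apply (it concerns $\widehat{F}_p=\pm p$ with matching index). Third, your plan for $d=5$ — rank computation plus Chabauty--Coleman plus a Mordell--Weil sieve on $Y^2=5X^{22}\pm 4\ell$ — is not realistic at genus $10$; the paper instead uses the classical Lucas-number theorem of Bugeaud--Mignotte--Siksek for $\ell=5$ and the LRN-to-Thue reduction for the others, and in fact could \emph{not} treat $H^{+}_{11,\ell}$ at all, which (rather than stray integral points) is the true source of the sign asymmetry: only $-19$, and only the negative values $-11,-29,-31,\dots$, can be excluded for primes with $\leg{\ell}{5}=1$. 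Relatedly, your claim that restricting part (2) to $\leg{\ell}{5}=-1$ ``removes the difficult genus-$10$ curve'' overlooks that the second list in part (2) consists entirely of primes with $\leg{\ell}{5}=1$, whose exclusion requires precisely the $H^{-}_{11,\ell}$ classifications (largely under GRH) recorded in Table~\ref{Htable}.
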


\begin{remark}
This paper\footnote{This paper was first posted to the arXiv on May 20, 2020.}  has stimulated a number of recent works
on variants of Lehmer's speculation.  Many authors have made use of
its contents and strategy to obtain further results extending and generalizing Theorem~\ref{Lehmer135}.
To be precise, Amir and Hatziiliou \cite{AmirHatziiliou}, Amir and Hong \cite{AmirHong}, 
Bennett, Gherga, Patel and Siksek \cite{BGPS}, Dembner and Jain
\cite{DembnerJain}, Hanada and Madhukara \cite{HanadaMadhukara}, and the authors \cite{PaperJNT, PaperLaMatematica} have made use of
Theorem~\ref{LehmerVariation} to obtain explicit extensions and further generalizations of Theorem~\ref{Lehmer135}.
Most notably, Bennett, Gherga, Patel and Siksek (see Theorem 6 of \cite{BGPS}) proved the striking fact  that
$|\tau(n)|\neq \ell^m$ for every prime $3\leq \ell<100$ and every positive integer $m$.
\end{remark}

\noindent
There are infinite families of newforms with even level for which these methods apply for ordinary primes $\ell$ (i.e. $\ell \nmid a_f(\ell)$). The next theorem
offers unconditional results for $3\leq \ell \leq 37,$ when $2k \in \{4, 6, 8, 10\}$ or
 $\gcd(3 \cdot 5\cdot 7, 2k-1)\neq 1$. It also
gives further  results conditional on the Generalized Riemann Hypothesis (GRH).

\begin{thm}\label{LehmerGeneral} 
If  $f(z)=q+\sum_{n=2}^{\infty}a_f(n)q^n\in S_{2k}(\Gamma_0(2N))\cap \Z[[q]]$ is an even weight $2k\geq 4$ newform with trivial mod 2 residual Galois representation, then the following are true for ordinary primes $\ell.$
\begin{enumerate}
\item For every $n>1$ we have $a_f(n)\not \in \{\pm 1\}.$
\item If $2k=4$, then for every $n$ we have
$$
a_f(n)\not \in \left\{\pm \ell \ : \ 3\leq \ell \leq 37\ {\text {\rm prime}} \right\}\setminus\left 
\{ \pm11, -13,17,\pm19,-23,37\right\}.
$$
Assuming GRH, for every $n$ we have
$$
a_f(n) \not \in \{\pm \ell \ : \ 41 \leq \ell \leq 97 \ {\text {\rm prime}}\}\setminus\{-41,-53,-61,-67,\pm71,73,-89\}.
$$
\item If $2k=6$, then for every $n$ we have
$$
a_f(n)\not \in \left\{\pm \ell \ : \ 3\leq \ell \leq 37\  {\text {\rm prime}}\right\}\setminus\left \{ 11,13\right\}.
$$
Assuming GRH, for every $n$ we have
$$
a_f(n) \not \in \{\pm \ell \ : \ 41 \leq \ell \leq 97 \ {\text {\rm prime}}\}\setminus\{-47\}.
$$
\item If $2k=8$, then for every $n$ we have
$$
a_f(n)\not \in \left\{\pm \ell \ : \ 3\leq \ell \leq 37\  {\text {\rm prime}}\right\}.
$$
Assuming GRH, for every $n$ we have
$$
a_f(n) \not \in \{\pm \ell \ : \ 41\leq \ell \leq 97 \ {\text {\rm prime}}\}\setminus\{-71\}.
$$
\item If $2k=10,$ then for every $n$ we have
$$
a_f(n)\not \in \left\{\pm \ell \ : \ 3\leq \ell \leq 37\ {\text {\rm prime}} \right\}.
$$
Assuming GRH, for every $n$ we have
$$
a_f(n) \not \in \{\pm \ell \ : \ 41\leq \ell \leq 97\ {\text {\rm prime}}\}\setminus\{-83\}.
$$
\item  If $\gcd(3 \cdot 5\cdot 7\cdot 11\cdot 13, 2k-1)\neq 1$ and $2k\geq 12$, then for every $n$ we have 
$$
a_f(n)\not \in  \left \{ \pm \ell \ : \ 3\leq \ell <37 \ {\text {\rm prime with}}\ \leg{\ell}{5}=-1\right\}
\cup \{-37\}.
 $$
Moreover, if $2k\neq 16,$ then $a_f(n)\neq 37.$
Assuming GRH, for every $n$ we have
$$
a_f(n)\not \in  \left \{ \pm \ell \ : \ 41\leq \ell \leq 97 \ {\text {\rm prime with}}\ \leg{\ell}{5}=-1\right\}.
 $$
\item If $\gcd(3\cdot 5, 2k-1)\neq 1$ and $2k\geq 12$, then for every $n$ we have
$$a_f(n) \not \in \left \{\pm \ell \ : \  11\leq \ell \leq 31 \  \text{ {\rm prime with }} \leg{\ell}{5}=1\right\}.
$$
Assuming GRH, the range of this set can be expanded to include $\ell \leq 89.$
\item If $7\mid (2k-1)$ and $2k\geq 12$, then for every $n$ we have
$$a_f(n) \not \in \left \{\pm \ell \ : \  11\leq \ell \leq 31 \  \text{ {\rm prime with }} \leg{\ell}{5}=1\right\}.
$$
Assuming GRH, for every $n$ we have
$$
a_f(n)\not \in \{\pm 41, \pm 59, \pm 61, -71, \pm 79, \pm 89\}.
$$
\item If $11\mid (2k-1),$ then for every $n$ we have $a_f(n)\neq -19$, and assuming GRH
we have
 \begin{displaymath}
a_f(n)\not \in 
\left \{ -11, -29, -31, -41, -59, -61, -71, -79, -89\right\}.
\end{displaymath}

\item If $13\mid (2k-1),$ then for every $n$ we have $a_f(n)\neq -11$, and assuming GRH we have
 \begin{displaymath}
a_f(n)\not \in \left \{-19, -29, -31, -41, -59, -61, -71, -79\right\}.
\end{displaymath}
\end{enumerate}
\end{thm}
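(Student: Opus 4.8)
The plan is to deduce Theorem~\ref{LehmerGeneral} from the general reduction announced as Theorem~\ref{LehmerVariantGeneral}, the exact analogue for these newforms of the criterion recorded in Theorem~\ref{LehmerVariation} for $\tau$. That reduction tells us that a solution of $a_f(n)=\pm\ell$ (here always the case $m=1$) forces $n=p^{d-1}$ for an odd prime $p$ and an odd prime $d\mid \ell(\ell^2-1)$, and it repackages the identity
\[
a_f(p^{d-1})=\frac{\alpha_p^{d}-\beta_p^{d}}{\alpha_p-\beta_p},\qquad \alpha_p+\beta_p=a_f(p),\ \ \alpha_p\beta_p=p^{2k-1},
\]
as the requirement that an explicit curve $C_{\ell,d}$ carry an integral point with $Y=a_f(p)$ and $X$ a power of $p$. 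So the first move is, for each $\ell$ in the relevant range and each congruence class of the weight, to list the admissible $d$ (the odd prime divisors of $\ell(\ell^2-1)$) and to write down the corresponding curves, exactly as in the sample equations \eqref{HyperEquations}. The degenerate part (1), $a_f(n)=\pm1$, is handled by the same reduction, where the resulting unit equation $a_f(p^{d-1})=\pm1$ is eliminated by a direct size and congruence argument.

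Next I would prune this list using congruences. Two inputs do most of the pruning. First, the hypothesis that $f$ has trivial mod $2$ residual representation makes every $a_f(p)$ even, hence $Y$ even; reducing the curve equations modulo small powers of $2$ frequently produces a local obstruction and is essential to the reduction itself. Second, and decisively, the value of $\leg{\ell}{5}$ controls whether $d=5$ is admissible at all: since $5\mid \ell(\ell^2-1)$ exactly when $\ell\equiv0,\pm1\pmod5$, the hypothesis $\leg{\ell}{5}=-1$ removes the $d=5$ curve outright, which is why that Legendre condition recurs throughout parts (6)--(8). The weight divisibility hypotheses ($7\mid 2k-1$, $11\mid 2k-1$, $13\mid 2k-1$, and the combined $\gcd$ conditions) are used similarly: when $d\mid 2k-1$ one may write $2k-1=ds$ and substitute $X=p^{s}$, collapsing the infinitely many weights in that class onto a single curve $C_{\ell,d}$ of bounded degree, so that one finite computation settles every such weight. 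One also reduces $a_f(p^{d-1})=\pm\ell$ modulo $d$ itself, where the Frobenius congruence $a_f(p^{d-1})\equiv\bigl(a_f(p)^2-4p^{2k-1}\bigr)^{(d-1)/2}\pmod d$ shows the left side is $\equiv 0,\pm1\pmod d$ and thereby constrains the surviving $p$.

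Finally I would resolve the finitely many curves that survive. The cases $d=3$, and small values of $d$ in general, typically yield Thue equations or genus $\le 1$ models, which can be handled unconditionally by standard Thue solvers and by elliptic Chabauty together with a Mordell--Weil sieve; these give the unconditional exclusions in each part. The remaining higher-genus curves are where GRH enters: the rigorous determination of the relevant class groups and Mordell--Weil groups (and any effective Chebotarev input) is only provably correct under GRH, which is exactly why parts (2)--(10) split into an unconditional statement and a GRH-conditional extension, and why the surviving $\pm\ell$ are flagged as exceptions. I expect the genuine obstacle to be precisely this last step: certifying that these explicit but high-genus curves have no integral point with $X$ a prime power, since no general unconditional algorithm is available and the computations are both delicate and, for the larger $\ell$, only conditionally rigorous.
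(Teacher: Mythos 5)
Your architecture matches the paper's proof of Theorem~\ref{LehmerGeneral} essentially step for step: invoke Theorem~\ref{LehmerVariantGeneral} to force $n=p^{d-1}$ with $d\mid \ell(\ell^2-1)$ an odd prime; observe that $\leg{\ell}{5}=-1$ removes $d=5$ (this is exactly why the Legendre condition appears in parts (6)--(8)); convert $d=3$, $d=5$, and $d\geq 7$ via Lemma~\ref{DiophantineCriterion} into integer points on $Y^2=X^{2k-1}\pm\ell$, on $Y^2=5X^{2(2k-1)}\pm 4\ell$, and into the weight-independent Thue equations $F_{d-1}(X,Y)=\pm\ell$; and your substitution $X=p^{(2k-1)/e}$ for a divisor $e\in\{3,5,7,11,13\}$ of $2k-1$ is precisely how the paper collapses every weight in a divisibility class onto the bounded-degree curves $C_{d,\ell}^{\pm}$ and $H_{d,\ell}^{\pm}$, after which Lemmas~\ref{Monster}, \ref{HYPER}, \ref{AnnalsCorollary}, \ref{Hyper11_19}, and \ref{Plus691} (i.e., the Appendix tables) finish the argument, the exceptional values being the tabulated points whose $X$-coordinate is a prime power of the right shape and whose $Y$-coordinate is even.

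Two of your auxiliary claims are off, and one of them would fail as written. First, part (1) cannot be ``eliminated by a direct size and congruence argument'': $|a_f(p^{d-1})|=|u_d(\alpha_p,\beta_p)|=1$ is the total absence of prime divisors, and no elementary estimate rules this out (defective Lucas numbers equal to $\pm 1$ genuinely occur, e.g.\ for $(A,B,n)=(\pm 3,2^3,3)$). The paper needs the Bilu--Hanrot--Voutier primitive-divisor theorem together with the full classification of defective Lucas numbers in Tables~\ref{table1}--\ref{table2}; this is Proposition~\ref{One}, and the same defect analysis must be discharged before one may assume that $|a_f(p^{d-1})|=\ell$ occurs at the first index divisible by $\ell$, which the paper does by verifying that $C_{d,3}^{\pm}$ has no relevant points for $d\in\{2,3,4,6,7\}$ (Lemma~\ref{HYPER}). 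Second, your unconditional/GRH split does not track genus: the unconditional parts include genus $\geq 2$ cases (e.g.\ weight 8 requires $Y^2=X^7\pm\ell$, genus 3), which the paper settles not by elliptic Chabauty but by citing the unconditional resolutions of Lebesgue--Ramanujan--Nagell equations due to Cohn, Barros, and Bugeaud--Mignotte--Siksek; GRH enters through class-group computations in the Thue reductions for $41\leq\ell\leq 97$ and through certain Mordell--Weil rank certifications (e.g.\ $H_{7,89}^{+}$), while some high-genus curves are handled unconditionally by Chabauty--Coleman. Minor further discrepancies: the paper never reduces the curves modulo powers of $2$ (evenness of $a_f(p)$ is used to force $d$ odd, to kill defective cases, and to filter tabulated points), and your mod-$d$ Frobenius congruence, while true, plays no role in the paper's proof.
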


\begin{FiveRemarks} \ \ \ \newline 
\noindent
(i) Theorem~\ref{LehmerGeneral} applies to all newforms \cite{OnoTaguchi} with integer coefficients with level $2^aN$, where $a\geq 0$ and
$N\in \{1, 3, 5, 15, 17\}$. Moreover,
the result holds  for all odd levels when $a_f(2)$ is even. 

\smallskip
\noindent
(ii) These results follow from Theorem~\ref{LehmerVariantGeneral}, which  constrains
coefficients that are odd prime powers in absolute value. This method extends to arbitrary odd integers by Hecke multiplicativity, thereby giving an algorithm for determining whether a given odd integer is a newform coefficient. 

\smallskip
\noindent
(iii) The proof of Theorem~\ref{LehmerGeneral} (2-6)  locates values $\pm \ell$ that are possible coefficients.
For example, Theorem~\ref{LehmerGeneral} (2) allows weight 4 coefficients to be in the set
$\{\pm 11, -13, 17, \pm 19, -23, 37\}.$
 The proof shows that these values can only occur as one of the following coefficients:
\begin{displaymath}
\begin{split}
&a_f(3^2)=37,\  \  a_f(3^2)=-11,\  \ a_f(3^2)=-23, \ \  a_f(3^4)=19,\ \ a_f(5^2)=19, \\
&a_f(7^2)=-19,\ \ a_f(7^4)=11,\ \   a_f(17^2)=-13, \ \ a_f(43^2)=17.
\end{split}
\end{displaymath}
Similarly, Theorem~\ref{LehmerGeneral} (6) allows a coefficient  of 37 for weight $16,$
which must be $a_f(3^2)=37.$ 

\smallskip
\noindent
(iv) The assumption that $2k\geq 4$ guarantees that certain algebraic curves have positive genus,
and so have finitely many integer points by Siegel's Theorem.  Moreover, we do not believe that conclusions analogous to those obtained in Theorem~\ref{LehmerGeneral} hold for weight 2 newforms.

\smallskip
\noindent
(v) Some of the results in Theorem~\ref{LehmerGeneral} rely on the GRH.  These cases pertain to situations where GRH was required to  reduce the running time of certain computational number theoretic algorithms. The unconditional bounds lead to infeasible computer calculations.

\end{FiveRemarks}

\begin{example} By Theorem~\ref{LehmerGeneral},  the coefficients of the Hecke eigenform $E_4(z)\Delta(z)$ 
never belong to
$$\{-1\} \cup
\{\pm \ell \ : \  3\leq \ell \leq 37\ {\text {\rm prime}}\}.
$$
Moreover, under GRH the range of the second set can be extended to the odd primes $\ell \leq 97.$
\end{example}

Theorems~\ref{Lehmer135} and \ref{LehmerGeneral} offer variants of Lehmer's speculation for individual newforms.
It is natural to consider an aspect of these questions where the newforms $f$ vary. 
Namely, can a fixed odd $\alpha$ be a Fourier coefficient of
newforms with arbitrarily large weight? We effectively show that this is generically not the case. To ease notation, if $\ell$ is an odd prime, then let
$\mathbb{S}_{\ell}$ denote the set of even weight newforms with integer coefficients,
trivial residual mod 2 Galois representation, and even level that is coprime to $\ell$.

\begin{thm}\label{Power}
If  $m\in \Z^{+},$ then there are effectively computable constants
$M^{\pm}(\ell,m)=O_{\ell}(m)$ for which
$\pm \ell^m$ is not a  coefficient of any
$f\in \mathbb{S}_{\ell}$ with weight $2k>M^{\pm}(\ell,m)$ with $\ell$ ordinary for $f$.
In particular,\footnote{We offer these values to indicate that one can easily work out explicit constants.} 
for $\ell \in \{3, 5\},$ we have
$$
M^{\pm}(\ell,m):= \begin{cases}
2m+10^{23}\sqrt{m} \ \ \ \ \ &{\text {\rm if $\varepsilon=+, m$ odd, and $\ell=3$}},\\
2m+10^{13}\sqrt{m} \ \ \ \ \ &{\text {\rm if $\varepsilon=+, m$ even, and $\ell=3$}},\\
2m+10^{32}\sqrt{m} \ \ \ \ \ &{\text {\rm if $\varepsilon=-$ and $\ell=3$}},\\
3m+10^{24}\sqrt{m} \ \ \ \ \ &{\text {\rm if $\varepsilon=\pm, m$ odd, and $\ell=5$}},\\
3m+10^{13}\sqrt{m} \ \ \ \ \ &{\text {\rm if $\varepsilon=+, m$ even, and $\ell=5$}},\\
3m+10^{30}\sqrt{m} \ \ \ \ \ &{\text {\rm if $\varepsilon=-, m$ even, and $\ell=5$}}.\\
\end{cases}
$$                                  
\end{thm}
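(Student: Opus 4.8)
The plan is to combine the structural reduction already in hand with the effective theory of linear forms in logarithms. By the reduction underlying Theorem~\ref{LehmerVariantGeneral} (cf. Theorem~\ref{LehmerVariation}), if $\pm\ell^m=a_f(n)$ for some $f\in\mathbb{S}_\ell$ of weight $2k$ and some $n>1$, then $n=p^{d-1}$, where $p$ is an odd prime with $p\nmid 2N$ and $d$ is an odd prime dividing $\ell(\ell^2-1)$. The essential gain is that this leaves only \emph{finitely many} values of $d$, and for each the problem becomes a single exponential Diophantine equation in the unknowns $a_f(p)\in 2\Z$, the prime $p$, and the weight $2k$.

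The next step is to express the coefficient as a norm. Writing the Frobenius angle via $a_f(p)=2p^{(2k-1)/2}\cos\theta$ and using the Chebyshev factorization $a_f(p^{d-1})=p^{(d-1)(2k-1)/2}U_{d-1}(\cos\theta)$, then pairing the roots $\cos(j\pi/d)$ and $-\cos(j\pi/d)$, gives $a_f(p^{d-1})=\prod_{j=1}^{(d-1)/2}\bigl(a_f(p)^2-4p^{2k-1}\cos^2(j\pi/d)\bigr)=N_{K_d/\Q}\bigl(a_f(p)^2-4p^{2k-1}\beta_d\bigr)$, where $K_d=\Q(\zeta_d)^{+}$ is totally real of degree $(d-1)/2$ and $\beta_d=\cos^2(\pi/d)$. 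Thus $\pm\ell^m=N_{K_d/\Q}(a_f(p)^2-4p^{2k-1}\beta_d)$ is a norm-form equation, whose integer unknowns are a perfect square $a_f(p)^2$ and four times a prime power $4p^{2k-1}$.

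The resolution then splits according to the degree $(d-1)/2$. For $d\ge 7$ the form has degree $\ge 3$, so this is a genuine Thue equation, and the effective (Baker-type) bounds for Thue equations bound $\max(a_f(p)^2,4p^{2k-1})$, hence bound $p^{2k-1}$ and the weight directly. The degenerate small-degree cases are exactly the source of the explicit constants recorded for $\ell\in\{3,5\}$: for $\ell=3$ only $d=3$ occurs, and for $\ell=5$ only $d\in\{3,5\}$. When $d=3$ the equation is the Lebesgue--Nagell equation $a_f(p)^2=p^{2k-1}\pm\ell^m$, and when $d=5$ it is the Pellian $(2a_f(p)^2-3p^{2k-1})^2-5p^{2(2k-1)}=\pm4\cdot 5^m$, whose weight-$12$ specialization is the curve $Y^2-5X^{22}=\pm4\cdot 5^m$ in~\eqref{HyperEquations}. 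In these degrees the norm equation alone has infinitely many solutions, so one must feed in the arithmetic constraints that $p^{2k-1}$ is a prime power and $a_f(p)^2$ a perfect square; the exponent $2k-1$ is then bounded by locating perfect powers in the associated binary recurrence and invoking lower bounds for linear forms in (two) logarithms. Here, rather than Siegel's theorem (which gives only ineffective finiteness as in Remark~(iv)), effectivity is crucial and comes precisely from the logarithmic-forms input.

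Finally I would assemble the bound by taking the maximum over the finite data -- the prime $d$, the sign $\varepsilon$, and the residue of $m\bmod 2$ -- and over $p$, the worst case occurring at the smallest admissible prime $p=3$. This yields $M^{\pm}(\ell,m)=O_\ell(m)$: the leading linear term reflects the comparison of $\ell^m$ against the archimedean size of $a_f(p)$ and $p^{2k-1}$ at $p=3$, placing $\log 3$ in the denominator and so producing the coefficients $2$ for $\ell=3$ and $\approx 3$ for $\ell=5$, while the $\sqrt{m}$ term carries the astronomical constants intrinsic to the linear-forms-in-logarithms estimates. I expect the main obstacle to be exactly this last, quantitative step: extracting fully explicit and uniform-in-$p$ constants from the logarithmic-forms bounds, and in particular handling the degenerate degrees $d=3,5$, where the weight bound is invisible at the level of the norm form and must be recovered from the perfect-power constraint.
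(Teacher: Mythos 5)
Your proposal follows essentially the same route as the paper: the reduction via Theorem~\ref{LehmerVariantGeneral} to the finitely many odd primes $d\mid\ell(\ell^2-1)$, the identification of $a_f(p^{d-1})$ with the norm form $F_{d-1}(p^{2k-1},a_f(p)^2)$ handled by effective Thue bounds (the paper cites Tzanakis--de Weger) for $d\geq 7$, and the treatment of the degenerate cases $d=3,5$ as Lebesgue--Ramanujan--Nagell equations $X^2+\varepsilon\ell^m=Y^n$ resolved by Baker-type linear forms in logarithms (the paper's Theorem~\ref{Explicit35}, via Baker--W\"ustholz with three logarithms in $\Q(\sqrt{-\varepsilon\ell^m})$), which is exactly where the explicit constants for $\ell\in\{3,5\}$ arise. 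Your only slight inaccuracy is cosmetic: the paper's linear coefficients come from $\log\ell/\log 2$ (using $|Y|\geq 2$ in the general Diophantine statement) rather than $\log 3$ at $p=3$, but this does not affect the argument.
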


\begin{ThreeRemarks}\ \ \ \  \newline 
\noindent
(i) The condition that the level of $f$  is even  is not crucial  for the proof of Theorem~\ref{Power}.
If the level is odd, then  the proof implies that
$a_f(2n+1)\neq \pm \ell^m$ for all $n$ provided that $f$ has  large weight. Furthermore, if $a_f(2)$ is even, then  the stronger claim that $\pm \ell^m$
is not a Fourier coefficient holds.

\smallskip
\noindent
(ii) The condition that the level of $f$ is coprime to $\ell$  also is not crucial. If $\ell$ exactly divides the level, then there is at most one counterexample, and it will be
a Fourier coefficient of the form $a_f(\ell^r)$ (see Theorem~\ref{Newforms} (4)).
Otherwise, the stronger claim holds.

\smallskip
\noindent
(iii) Using the methods in this paper, one can obtain a generalization of Theorem~\ref{Power} for all odd $\alpha$, as well as analogous results for odd weights and forms with real Nebentypus. 
\end{ThreeRemarks}

These results are related to lower
 bounds for the number of prime divisors
of coefficients of newforms. We obtain a general
theorem (see Theorem~\ref{PrimeDivisorsGeneral}) which implies the following lower bound for $\Omega(\tau(n)),$ the number of prime divisors (counted with multiplicity) of $\tau(n)$. As usual,
we let $\omega(n)$ denote the number of distinct prime divisors of $n,$ and we let $\ord_p(n)$ denote the power of $p$ dividing $n.$

\begin{thm}\label{PrimeDivisors}
If $n>1$ is divisible by only ordinary primes, then
$$
\Omega(\tau(n))\geq \sum_{\substack{p\mid n\\ prime}}\left ( \sigma_0(\ord_p(n)+1)-1\right)\geq \omega(n).
$$
\end{thm}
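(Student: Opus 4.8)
The plan is to combine the multiplicativity of $\tau$ with the cyclotomic factorization of the Hecke recursion, thereby reducing the whole statement to the claim that each cyclotomic factor contributes at least one prime. First I would write $n=\prod_{p\mid n}p^{a_p}$ with $a_p=\ord_p(n)\geq 1$. Since $\tau$ is multiplicative and $\Omega$ is completely additive, $\Omega(\tau(n))=\sum_{p\mid n}\Omega(\tau(p^{a_p}))$, so it suffices to prove $\Omega(\tau(p^a))\geq \sigma_0(a+1)-1$ for every prime power $p^a$ with $a\geq 1$. The rightmost inequality of the theorem is then immediate: each $a_p\geq 1$ forces $a_p+1\geq 2$, hence $\sigma_0(a_p+1)-1\geq 1$, and summing over $p\mid n$ gives $\sum_{p\mid n}\bigl(\sigma_0(\ord_p(n)+1)-1\bigr)\geq \omega(n)$.

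Next I would set up the factorization. Let $\alpha_p,\beta_p$ be the roots of $X^2-\tau(p)X+p^{11}$, so that $\alpha_p+\beta_p=\tau(p)$ and $\alpha_p\beta_p=p^{11}$. Deligne's bound gives $|\alpha_p|=|\beta_p|=p^{11/2}$, and since $\tau(p)^2=4p^{11}$ is impossible (a perfect square cannot equal $p$ times a square), the roots are distinct complex conjugates. The Hecke recursion yields $\tau(p^a)=(\alpha_p^{a+1}-\beta_p^{a+1})/(\alpha_p-\beta_p)$, and the polynomial identity $(X^{m}-Y^{m})/(X-Y)=\prod_{d\mid m,\,d>1}\Phi_d(X,Y)$, with $\Phi_d$ the homogenized $d$-th cyclotomic polynomial, specializes to
\[
\tau(p^a)=\prod_{\substack{d\mid a+1\\ d>1}}\Phi_d(\alpha_p,\beta_p).
\]
For $d>1$ the form $\Phi_d(X,Y)$ is symmetric, so each factor $\Phi_d(\alpha_p,\beta_p)$ is a rational integer, and there are exactly $\sigma_0(a+1)-1$ of them.

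It remains to show that each factor satisfies $|\Phi_d(\alpha_p,\beta_p)|\geq 2$; this is the heart of the matter. Non-vanishing is clean: $\Phi_d(\alpha_p,\beta_p)=0$ would force $\gamma:=\alpha_p/\beta_p$ to be a primitive $d$-th root of unity, whence $\gamma+\gamma^{-1}=\tau(p)^2/p^{11}-2\in\Q$ would equal $2\cos(2\pi k/d)$; by Niven's theorem this rational value occurs only for $d\in\{1,2,3,4,6\}$, and Deligne's bound together with $cp$ never being a perfect square ($c\in\{1,2,3,4\}$) rules out all of these except $\gamma=-1$, i.e.\ $\tau(p)=0$, a case I would dispose of directly (then $\tau(p^a)$ is either $0$, so the bound holds trivially, or $\pm p^{11a/2}$, whose $\Omega$ equals $11a/2\geq \sigma_0(a+1)-1$). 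The genuine obstacle is excluding the unit values $\pm1$. Writing $|\Phi_d(\alpha_p,\beta_p)|=p^{11\phi(d)/2}\,|\Phi_d(\gamma)|$, the Deligne factor $p^{11\phi(d)/2}$ overwhelms everything once $|\Phi_d(\gamma)|$ is bounded below, which amounts to a lower bound of shape $|\gamma-\zeta|\geq c\,p^{-C}$ for the distance from $\gamma$ — a quadratic number of height $\asymp p^{11}$ — to a root of unity $\zeta$; this is exactly the Diophantine-approximation input, and it is the step where effectivity for all large $d$ is delicate (one may also note that parity gives $\Phi_d(\alpha_p,\beta_p)\equiv\Phi_d(1)\pmod 2$, which is even, hence $\geq 2$ in absolute value, precisely when $d$ is a power of $2$). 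The finitely many residual pairs $(p,d)$ reduce to integral points on curves of the form $\Phi_d(\alpha_p,\beta_p)=\pm 1$ — for instance $\Phi_3(\alpha_p,\beta_p)=\tau(p)^2-p^{11}=\pm 1$, a model cousin of those in \eqref{HyperEquations} — and are settled by the methods behind Theorem~\ref{PrimeDivisorsGeneral}. Granting $|\Phi_d(\alpha_p,\beta_p)|\geq 2$, complete additivity gives $\Omega(\tau(p^a))=\sum_{d\mid a+1,\,d>1}\Omega(\Phi_d(\alpha_p,\beta_p))\geq \sigma_0(a+1)-1$, and assembling over $p\mid n$ completes the proof.
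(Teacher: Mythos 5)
Your overall skeleton is sound and in fact parallels the paper's: multiplicativity splits the problem over prime powers, and since $\tau(p^a)=\prod_{d\mid a+1,\,d>1}\Phi_d(\alpha_p,\beta_p)$ is an \emph{exact} factorization into integers, $\Omega$ counted with multiplicity only requires each factor to avoid $\{0,\pm 1\}$ --- a nice structural simplification, since you never need the primes coming from different $d$ to be distinct. The genuine gap is exactly the step you flag as ``the heart of the matter'': proving $\Phi_d(\alpha_p,\beta_p)\neq\pm 1$ uniformly in $p$ and $d$ is essentially the full content of the Bilu--Hanrot--Voutier theorem \cite{BHV} (a unit value of the cyclotomic factor is one of the ways $u_d$ fails to have a primitive prime divisor), and your sketch does not deliver it. The proposed lower bound $|\gamma-\zeta|\geq c\,p^{-C}$ via Diophantine approximation is precisely the hard analytic input that occupies much of \cite{BHV}, and your fallback --- that ``finitely many residual pairs $(p,d)$'' are ``settled by the methods behind Theorem~\ref{PrimeDivisorsGeneral}'' --- is circular, because the paper's proof of Theorem~\ref{PrimeDivisorsGeneral} consists of citing Theorem~\ref{Bilu} together with the complete classification of defective Lucas numbers. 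Moreover ``finitely many residual pairs'' is unjustified as stated: for each fixed $d$ the condition $\Phi_d(\alpha_p,\beta_p)=\pm 1$ is a single curve (with finitely many integer points by Siegel, ineffectively), but $d$ ranges over all integers, so without a quantitative bound eliminating all large $d$ you face infinitely many Diophantine conditions; your parity observation disposes only of $d=2^k$.

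For comparison, the paper's route avoids re-proving any of this: Theorem~\ref{Newforms}(3) identifies $\tau(p^a)=u_{a+1}(\alpha_p,\beta_p)$ as a Lucas number; Proposition~\ref{PropA} gives $u_d\mid u_{a+1}$ for every $d\mid a+1$; and Theorem~\ref{Bilu} together with the classification of defective Lucas numbers under the constraint (\ref{Modularity}) (Tables \ref{table1}--\ref{table2}, distilled in Lemma~\ref{Part1DefectivePrimality}) shows that no pair $(A,B)=(\tau(p),p^{11})$ with $A$ even produces a unit value, so each $u_d$ with $d>2$ contributes a primitive (hence new) prime, while $u_2=\tau(p)$ is even and contributes one more; the case $p\mid\tau(p)$ is handled directly via $p^a\mid\tau(p^a)$. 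If you want to keep your cyclotomic decomposition, the fix is simply to invoke \cite{BHV} at this step --- their theorem operates at exactly the level of your factors $\Phi_d(\alpha_p,\beta_p)$. One further small slip: your Niven-theorem analysis asserts $cp$ is never a perfect square for $c\in\{1,2,3,4\}$, but $c=p$ occurs at $p=2,3$ (e.g.\ $\tau(3)^2=3^{12}$ is not excluded by Deligne's bound alone); this is harmless because $\tau(2)=-24$ and $\tau(3)=252$, but it needs to be said.
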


\begin{remark}Theorem~\ref{PrimeDivisors} is sharp, as the
prime in (\ref{LehmerPrime})
satisfies $\Omega(\tau(251^2))=\sigma_0(3)-1=1.$
\end{remark}

The proofs of these results make use of a number of important tools. 
The deep work of Bilu, Hanrot, and Voutier
\cite{BHV} on primitive prime divisors of Lucas sequences forms the primary framework for these results. 
The theory for Lucas sequences applies to the recursion relations given
by Hecke operators in the theory of modular forms. Their work, combined with some combinatorial facts and properties of 2-adic modular Galois representations, leads to Theorems~\ref{PrimeDivisors} and \ref{PrimeDivisorsGeneral}. 
Theorems~\ref{LehmerVariation} and \ref{LehmerVariantGeneral}
follow easily from these results, and they 
 offer an algorithm for locating $\pm \ell^m,$ for odd primes $\ell,$ in Fourier expansions in suitable newforms.
Such occurrences correspond to  special integer points (if any) on elliptic curves, hyperelliptic curves, and certain Thue equations.
 In Section~\ref{IntegerPoints} we classify the integer points on the six curves
in (\ref{HyperEquations}) when $m=1$ (among others), using facts about the classical Lucas sequence,  the Chabauty--Coleman method, and results on Thue equations. We rely  heavily on previous work of Barros \cite{Barros}, Cohn \cite{Cohn}, Bugeaud, Mignotte, and Siksek \cite{BMS}.
With some assistance from Ramanujan's congruences for $\tau(n)$, this classification gives Theorem~\ref{Lehmer135}. In general, this classification
  leads to the proof of Theorem~\ref{LehmerGeneral}.
Finally, in the last section we prove Theorem~\ref{Power} on variants of Lehmer's speculation for large weight newforms.
  
 \section*{Acknowledgements} 
\noindent
The authors thank Malik Amir, Matthew Bisatt, Michael Griffin, Guillaume Hanrot, Vanshika Jain, Sachi Hashimoto, C\'eline Maistret, Drew Sutherland, and Charlotte Ure for their helpful comments during the preparation of this paper. The authors are particularly grateful to Guillaume Hanrot, who
offered assistance with various computer calculations. Finally, we thank the referees for offering further suggestions that improved this paper.

\section{Lucas sequences and the proof of Theorem~\ref{PrimeDivisors}}\label{PPD}

We recall work of Bilu, Hanrot, and Voutier \cite{BHV} on Lucas sequences.
Combining their results with facts about newforms gives Theorem~\ref{PrimeDivisorsGeneral}, which in turn
implies Theorem~\ref{PrimeDivisors}.

\subsection{Lucas sequences and their prime divisors}

Suppose that $\alpha$ and $\beta$ are algebraic integers for which $\alpha+\beta$ and $\alpha \beta$
are relatively prime non-zero integers, where $\alpha/\beta$ is not a root of unity.
Their {\it Lucas numbers} $\{u_n(\alpha,\beta)\}=\{u_1=1, u_2=\alpha+\beta,\dots\}$ are the integers
\begin{equation}
u_n(\alpha,\beta):=\frac{\alpha^n-\beta^n}{\alpha-\beta}.
\end{equation}
A prime  $\ell \mid u_{n}(\alpha,\beta)$ is a {\it primitive prime divisor of $u_n(\alpha,\beta)$} if $\ell \nmid (\alpha-\beta)^2 u_1(\alpha,\beta)\cdots u_{n-1}(\alpha, \beta)$.
Bilu, Hanrot, and Voutier \cite{BHV} proved the following definitive theorem. 

\begin{thm}\label{Bilu}
Every Lucas number $u_n(\alpha,\beta)$, with $n>30,$
has a primitive prime divisor.
\end{thm}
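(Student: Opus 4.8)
The plan is to attack the obstruction head-on: call a triple $(n,\alpha,\beta)$ \emph{$n$-defective} if $u_n(\alpha,\beta)$ has no primitive prime divisor, and aim to prove that every $n$-defective triple satisfies $n\leq 30$. The first step is purely algebraic. I would factor the numerator through the homogenized cyclotomic polynomials, writing $\alpha^n-\beta^n=\prod_{d\mid n}\Phi_d^{*}(\alpha,\beta)$, where $\Phi_d^{*}(\alpha,\beta)=\beta^{\phi(d)}\Phi_d(\alpha/\beta)=\prod_{\zeta}(\alpha-\zeta\beta)$ ranges over the primitive $d$-th roots of unity $\zeta$, so that the ``primitive part'' of $u_n$ is carried by $\Phi_n^{*}(\alpha,\beta)$. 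A standard divisibility analysis then shows that any prime dividing $\Phi_n^{*}(\alpha,\beta)$ is either a genuine primitive prime divisor of $u_n$ or divides $n$ and occurs to the first power; consequently, if $(n,\alpha,\beta)$ is $n$-defective then $|\Phi_n^{*}(\alpha,\beta)|$ is bounded in terms of the largest prime factor of $n$. This converts the combinatorial nonexistence hypothesis into the sharp analytic statement that $|\Phi_n^{*}(\alpha,\beta)|$ is \emph{small}.

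Second, I would bound $n$ effectively using transcendence theory. Setting $\gamma=\alpha/\beta$ (not a root of unity), the smallness of $\Phi_n^{*}(\alpha,\beta)$ forces $\prod_{\zeta}|\gamma-\zeta|$ to be tiny, so that $\gamma$ is exponentially close (in $n$) to a primitive $n$-th root of unity. Equivalently, a linear form of the shape $n\log\gamma-2\pi i k$ becomes exponentially small. Lower bounds for linear forms in two logarithms, in the style of Laurent--Mignotte--Nesterenko and their refinements, then yield a contradiction once $n$ exceeds an explicit absolute constant $N_0$. This is the classical Stewart-type argument, and it delivers something like $n\leq 3\cdot 10^{4}$ rather than the sharp $n\leq 30$.

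The final, and by far the hardest, step is to descend from the transcendence bound $n\leq N_0$ to the sharp bound $n\leq 30$ and to enumerate the finitely many sporadic exceptions in that range. For each fixed $n\leq N_0$ one parametrizes the $n$-defective pairs: the smallness condition on $\Phi_n^{*}(\alpha,\beta)$, combined with the hypothesis that $\alpha+\beta$ and $\alpha\beta$ are coprime nonzero integers, pins the symmetric functions of $\alpha,\beta$ to lie on a finite list of curves, producing families of Thue and Thue--Mahler equations. One then resolves these equations—by elementary manipulation and congruences for the smallest $n$, and by substantial effective machinery for the residual range—to extract the complete (finite) solution set and verify that no defective triple survives beyond $n=30$. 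The main obstacle is precisely this last phase: the gap between the astronomically large transcendence bound $N_0$ and the value $30$ is enormous, so the bulk of the work is the effective solution of the resulting Thue-type equations across the entire intermediate range, which is what necessitated the extensive computer verification in the original argument.
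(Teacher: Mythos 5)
This statement is not proved in the paper at all: Theorem~\ref{Bilu} is quoted verbatim from Bilu--Hanrot--Voutier \cite{BHV} as an external input, so there is no internal proof to compare yours against. What you have written is, in outline, an accurate reconstruction of the strategy of the original BHV proof: the factorization $\alpha^n-\beta^n=\prod_{d\mid n}\Phi_d^{*}(\alpha,\beta)$ with the primitive part carried by $\Phi_n^{*}$; the classical lemma (in the tradition of Carmichael and Stewart) that a non-primitive prime divisor of $\Phi_n^{*}(\alpha,\beta)$ must be the largest prime factor of $n$ and divides it only to the first power, so defectiveness forces $|\Phi_n^{*}(\alpha,\beta)|$ to be tiny; lower bounds for linear forms in two logarithms (Laurent--Mignotte--Nesterenko refinements) applied to $\gamma=\alpha/\beta$ lying exponentially close to a primitive $n$-th root of unity, which is exactly how Voutier obtained the absolute bound $n\le 30030$ --- consistent with your $N_0\approx 3\cdot 10^4$; and the reduction, for each fixed $n$ in the residual range, to Thue-type equations (indeed the very equations $\widehat{F}_p(X,Y)=\pm p$ of (\ref{ModifiedThue}) that this paper reuses via Cor.~6.6 of \cite{BHV}) resolved by the Bilu--Hanrot method with extensive computation.

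The caveat is that your text is a roadmap, not a proof: each of the three stages is asserted rather than executed. Two points deserve explicit mention if you were to flesh it out. First, the transcendence step only bites in the hard case where $\alpha,\beta$ are complex conjugates, so $|\gamma|=1$; when $\gamma$ is real the primitive part grows rapidly and the argument is comparatively elementary, and these cases must be separated. Second, the descent from $N_0$ to $30$, together with the classification of the sporadic defective examples for small $n$ (the content of Tables 1--4 of \cite{BHV} and the supplement of Abouzaid \cite{Abouzaid}, on which this paper's Theorem~\ref{AwesomeList} depends), is the overwhelming bulk of the original work; no amount of structural argument substitutes for those explicit height estimates, reduction steps, and computer verifications. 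So your proposal correctly identifies the architecture of the known proof, but within the conventions of this paper the honest ``proof'' of the statement is the citation.
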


This theorem is sharp; there are sequences for which $u_{30}(\alpha,\beta)$
does not have a primitive prime divisor. 
We call a Lucas number $u_n(\alpha,\beta)$, with $n>2,$ {\it defective}\footnote{We do not consider the absence of
a primitive prime divisor for $u_2(\alpha,\beta)=\alpha+\beta$ to be   a defect.}  if $u_{n}(\alpha,\beta)$ does 
not have a primitive prime divisor. Bilu, Hanrot and Voutier essentially complete the theory; they basically
characterized all of the defective Lucas numbers.
 Their work, combined with a subsequent paper\footnote{This paper included a few cases which were omitted in \cite{BHV}.} 
by Abouzaid \cite{Abouzaid},   gives the {\it complete classification} of
defective Lucas numbers.
Tables 1-4 in Section 1 of \cite{BHV} and Theorem 4.1 of \cite{Abouzaid} offer this
classification. Every defective Lucas number either belongs to a  finite list of sporadic examples or
a finite list of parameterized infinite families.

We consider Lucas sequences  arising from those quadratic integral polynomials
\begin{equation}\label{Modularity}
F(X)=X^2-AX+B=(X-\alpha)(X-\beta),
\end{equation}
where $B=\alpha \beta =p^{2k-1}$ is an odd power of a prime $p$, and $|A|=|\alpha+\beta|\leq 2\sqrt{B}=2p^{\frac{2k-1}{2}}.$ 
 A straightforward analysis of these tables of defective Lucas numbers reveals a list of sporadic examples, and several potentially infinite families of examples.  A straightforward case-by-case analysis using elementary congruences, divisibilities, and the truth of Catalan's conjecture \cite{Catalan},
that $2^3$ and $3^2$ are the only consecutive perfect powers,
yields the following characterization.

\begin{thm}\label{AwesomeList}
Tables \ref{table1} and \ref{table2} in the Appendix list the defective $u_n(\alpha,\beta)$
 satisfying (\ref{Modularity}).
\end{thm}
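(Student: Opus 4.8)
The goal is to prove Theorem~\ref{AwesomeList}: to list all defective Lucas numbers $u_n(\alpha,\beta)$ arising from the polynomials in~\eqref{Modularity}, where $B = \alpha\beta = p^{2k-1}$ is an odd prime power and $|A| = |\alpha+\beta| \le 2p^{(2k-1)/2}$.

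Let me think about what constraints we have and how to use the BHV/Abouzaid classification.

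We have a Lucas sequence from $F(X) = X^2 - AX + B$ with:
- $B = \alpha\beta = p^{2k-1}$ (odd power of a prime)
- $A = \alpha + \beta$ with $|A| \le 2\sqrt{B}$ (so $\alpha,\beta$ are complex conjugates or real with this bound — actually the bound $|A| \le 2\sqrt{B}$ means the discriminant $A^2 - 4B \le 0$, so the roots are complex conjugates, unless equality)

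Wait, but for Lucas sequences we need $\alpha/\beta$ not a root of unity, and $\gcd(A, B) = 1$.

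Since $B = p^{2k-1}$, the condition $\gcd(\alpha+\beta, \alpha\beta) = \gcd(A, B) = 1$ means $p \nmid A$.

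The discriminant is $D = A^2 - 4B = A^2 - 4p^{2k-1}$. Since $|A| \le 2\sqrt{B}$, we have $A^2 \le 4B$, so $D \le 0$.

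So the parameters $(A, B)$ for these Lucas sequences are constrained. The BHV tables classify defective Lucas numbers in terms of $(a, b)$ pairs (or their normalized forms). We need to intersect the BHV/Abouzaid classification with our family.

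**My proof plan:**

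First I would set up the correspondence with the BHV parametrization. In \cite{BHV}, Lucas pairs $(\alpha,\beta)$ are typically described via $(a,b)$ where $\alpha, \beta = (a \pm \sqrt{b})/2$, so that $\alpha+\beta = a$ and $\alpha\beta = (a^2-b)/4$, giving $A = a$ and $B = (a^2 - b)/4$, with discriminant $b$. Since our $B = p^{2k-1}$, I would translate: $a^2 - b = 4p^{2k-1}$, i.e., $b = a^2 - 4p^{2k-1} = D \le 0$. So $b$ is a nonpositive integer (the Lucas sequences here have imaginary quadratic $\alpha,\beta$), and $a = A$ is coprime to $p$.

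Next I would go through the BHV Tables 1–4 and Abouzaid Theorem 4.1 case by case. For each index $n$ where defective Lucas numbers can occur (necessarily $3 \le n \le 30$, and only finitely many sporadic $(a,b)$ plus the parametrized families), I check which entries satisfy our constraints:
\begin{enumerate}
\item The constraint $B = (a^2-b)/4 = p^{2k-1}$ forces $a^2 - b$ to be $4$ times an odd prime power.
\item Since $B$ is an odd power of a prime (so $2k-1 \ge 1$ is odd), I must check that $(a^2-b)/4$ is a prime raised to an \emph{odd} exponent.
\item The coprimality $\gcd(a, B)=1$ must hold.
\end{enumerate}
For the sporadic examples, this is a finite check: for each listed $(a,b)$ I compute $B = (a^2-b)/4$ and test whether it is an odd prime power. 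For the parametrized infinite families, each family gives $a, b$ as polynomials in a parameter, and I would impose that $(a^2 - b)/4$ equals $p^{2k-1}$; this typically reduces to asking when a value of a polynomial (or a product of consecutive-type expressions) is a perfect odd prime power.

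**The main obstacle** will be handling the parametrized infinite families. Here the condition $(a^2-b)/4 = p^{2k-1}$ becomes a Diophantine condition on the family parameter, and determining exactly which members yield odd prime powers is where the real work lies. The excerpt tells me the tool: I would reduce each family's condition to a question about consecutive perfect powers or small exceptional solutions, and then invoke the truth of Catalan's conjecture (Mih\u ailescu's theorem), that $2^3$ and $3^2$ are the only consecutive perfect powers, together with elementary congruence and divisibility arguments to eliminate all but finitely many parameter values. This is exactly the "case-by-case analysis using elementary congruences, divisibilities, and the truth of Catalan's conjecture" flagged in the statement's preamble.

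To organize the computation, I would proceed as follows. For each family I write $B$ explicitly as a function of the family parameter; using $|A| \le 2\sqrt{B}$ (equivalently $b \le 0$) I first discard families whose discriminant $b$ is forced positive, since our setting is imaginary quadratic. Among the surviving families, I set $B = p^{2k-1}$ and analyze the resulting equation modulo small primes to pin down $p$ and the parameter, then apply Catalan to rule out the perfect-power coincidences, leaving only the finitely many genuine solutions. Collecting the surviving sporadic entries and family members, I record them in Tables~\ref{table1} and~\ref{table2}, which completes the characterization asserted by the theorem.
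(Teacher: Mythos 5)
Your proposal is correct and follows essentially the same route as the paper, whose entire proof is the sentence preceding the theorem: intersect the complete Bilu--Hanrot--Voutier/Abouzaid classification of defective Lucas numbers with the constraints of (\ref{Modularity}), dispatch the sporadic entries by a finite check, and handle the parametrized families by elementary congruences, divisibilities, and Mih\u{a}ilescu's theorem (Catalan's conjecture). One small adjustment: not every family collapses to finitely many explicit solutions---Table~\ref{table2} records the surviving families as parameterized by integer points on the curves in (\ref{Table2Curves}), so your final step should output curve-membership conditions rather than attempt to pin down $p$ and the parameter outright.
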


To identify the cases where $|u_n(\alpha,\beta)| = 1$ and $|u_n(\alpha,\beta)|=\ell$ is prime, we require the curves
\begin{equation}
 B_{1, k}^{r, \pm} : Y^2 = X^{2k-1} \pm 3^r,\quad \mathrm{and}\quad B_{2, k} : Y^2 = 2X^{2k-1} -1.
 \end{equation}

\begin{lem}\label{Part1DefectivePrimality}
Suppose that $u_n(\alpha,\beta)$ is a defective Lucas number from Table \ref{table1} or Table \ref{table2}.
\begin{enumerate}
\item We have that $|u_n(\alpha,\beta)|=1$ if and only if 
$$(A,B,n) \in \big\{ (\pm 1, 2, 5), (\pm 1, 2, 13), (\pm 1, 3, 5), (\pm 1, 5, 7), (\pm 2, 3, 3), (\pm 3, 2^3, 3) \big\},$$
or $(A,B,n) = (\pm m, p, 3),$ where $p = m^2+1$ is prime with $m>1$.

\item If $|u_n(\alpha,\beta)|=\ell$ is prime, then 
$(A,B,\ell, n) \in \big\{ (\pm 1, 2,7, 7), (\pm 1, 2,3, 8), (\pm 2, 11,5, 5) \big\},
$
or $(A,B,\ell, n) = (\pm m, p^{2k-1},3, 3),$ where $(p, \pm m)\in B^{1, \pm}_{1,k}$ and $3\nmid m$,  or $(A,B,\ell, n) = (\pm m, p^{2k-1},m, 4),$ where $(p, \pm m)\in B_{2,k}$.

\end{enumerate}
\end{lem}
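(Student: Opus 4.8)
The plan is to start from Theorem~\ref{AwesomeList}, which guarantees that every defective Lucas number $u_n(\alpha,\beta)$ satisfying~(\ref{Modularity}) appears in Table~\ref{table1} or Table~\ref{table2}. Those tables consist of finitely many sporadic triples $(A,B,n)$ together with a handful of parameterized families, all of small index $n$. Since $\alpha+\beta$ and $\alpha\beta$ are coprime by hypothesis, we have $\gcd(A,B)=1$ throughout, and every $u_n$ can be written as an explicit polynomial in $A$ and $B$ via the recursion $u_{n+1}=Au_n-Bu_{n-1}$. The strategy is then essentially mechanical: run through each entry of the two tables and determine exactly when $|u_n(\alpha,\beta)|$ equals $1$ or is prime.

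For the sporadic entries this is a finite computation. For each fixed triple $(A,B,n)$ we would evaluate the corresponding polynomial (for instance $u_5=A^4-3A^2B+B^2$ or $u_7=A^6-5A^4B+6A^2B^2-B^3$) and read off its absolute value, isolating those that equal $1$ and those that are prime. This produces the six sporadic triples in part~(1) and the three sporadic quadruples in part~(2). Here one must record both signs of $A$, since replacing $(\alpha,\beta)$ by $(-\alpha,-\beta)$ negates $A$ without changing $|u_n|$, and one must verify primality directly in order to separate the value-$1$ cases from the value-$\ell$ cases.

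The families carry the only genuine analysis, and they occur at $n=3$ and $n=4$. For $n=3$ we have $u_3=A^2-B$, and the primitive-divisor criterion shows $u_3$ is defective only if every prime $\ell\mid u_3$ divides $(\alpha-\beta)^2u_1u_2=(A^2-4B)A$; since $\gcd(A,B)=1$ rules out $\ell\mid A$, such an $\ell$ divides $(A^2-B)-(A^2-4B)=3B$ and hence $\ell=3$. Thus a defective $u_3$ equals $\pm 3^r$. The equation $|u_3|=1$ becomes $A^2-B=\pm 1$; by Catalan's conjecture (already invoked for Theorem~\ref{AwesomeList}) the only solutions with $B=p^{2k-1}$ and $2k-1>1$ are sporadic, so the surviving family has $2k-1=1$, whence $A^2-p=-1$ yields $p=m^2+1$ (while $A^2-p=+1$ forces $p=(|A|-1)(|A|+1)$, prime only for the sporadic $m=2$). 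The equation $|u_3|=3$ becomes $A^2-B=\pm 3$, i.e. $(p,\pm m)\in B^{1,\pm}_{1,k}$ with $\ell=3$, and $\gcd(A,B)=1$ forces $3\nmid m$. For $n=4$ we have $u_4=A(A^2-2B)$; a prime value requires a factor to be $\pm 1$, and an odd prime divisor of $A^2-2B$ is always primitive (it cannot divide $A$, $A^2-B$, or $A^2-4B$, as each case forces it to divide $2B$, contradicting $\gcd(A,B)=1$), so defectiveness forces $|A^2-2B|\in\{1,2\}$. When $A$ is even this gives $|u_4|=2|A|$, a multiple of $4$, hence never a unit or prime; when $A$ is odd it gives $A^2-2B=\pm 1$. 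The branch $A^2-2B=+1$ forces $B=(A^2-1)/2$ to be even, so $B=2^{2k-1}$ makes the product of the consecutive integers $t,t+1$ a power of two, which has no admissible solution; thus only $A^2-2B=-1$ survives, i.e. $(p,\pm m)\in B_{2,k}$, giving $|u_4|=|A|=m$, the prime $\ell=m$.

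The main obstacle is the bookkeeping for the families: one must correctly match each parameterized row of Tables~\ref{table1} and~\ref{table2} to the curves $B^{r,\pm}_{1,k}$ and $B_{2,k}$, and recognize that it is the \emph{defectiveness} hypothesis, not merely the equation $|u_n|=\ell$, that pins the prime to $\ell=3$ when $n=3$ and selects the branch $A^2-2B=-1$ when $n=4$. Once this correspondence and the coprimality/parity constraints are in place, the remaining steps (evaluating the sporadic polynomials, tracking the signs of $A$, and applying Catalan's conjecture to collapse the unit family to $B=p$) are routine.
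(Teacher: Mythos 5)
Your overall strategy coincides with the paper's: the published proof is literally the single sentence that both parts ``follow by a simple (and tedious) case-by-case analysis'' of Tables \ref{table1} and \ref{table2}, and your write-up is a correct execution of most of that analysis, in considerably more detail than the paper records. Your treatment of the sporadic entries, the $n=3$ families (including the use of coprimality to pin the prime to $\ell=3$, the Catalan argument collapsing the unit case to $B=p$ with $p=m^2+1$, and the deduction $3\nmid m$), and the $n=4$ analysis via $u_4=A(A^2-2B)$ with the parity argument selecting the branch $A^2-2B=-1$ are all sound and consistent with the tables.

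There is, however, one concrete omission: your claim that the parameterized families ``occur at $n=3$ and $n=4$'' is false as a description of Table \ref{table2}, which contains three further families at $n=6$, attached to the curves $B_{4,k}^{r}$, $B_{5,k}^{\varepsilon}$, and $B_{6,k}^{r,\varepsilon}$, with
$u_6=\pm(-2)^r m(2m^2+(-2)^r)/3$, $u_6=\pm\varepsilon m(2m^2+3\varepsilon)$, and $u_6=\pm 2^{r+1}\varepsilon m(m^2+3\varepsilon\cdot 2^{r-1})$ respectively. Since the lemma quantifies over \emph{every} defective entry of both tables, a complete proof must dispose of these rows. The repair is immediate but must be stated: in the first and third families $|u_6|$ is even (as $r\geq 1$), and one checks using the table's side constraints ($\gcd(m,6)=1$ with $(r,m)\neq(1,1)$, resp.\ $m\equiv 3\pmod 6$) that $|u_6|=2$ is impossible, while in the second family $3\mid m$ and $m>3$ force both factors $m$ and $2m^2+3\varepsilon$ to exceed $1$; hence no $n=6$ family value is ever $\pm 1$ or $\pm\ell$ with $\ell$ prime, which is why these rows are absent from the lemma's conclusion. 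With that paragraph added (and the analogous one-line reading of the remaining composite sporadic values $u_6=\pm 44$, $u_{10}=\mp 22,\mp 3725$, $u_{12}$, $u_{18}$, $u_{30}$, which your ``mechanical scan'' covers in principle), your argument is complete and matches the paper's intended proof.
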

\begin{proof} The proof of both (1) and (2) follow by a simple (and tedious) case-by-case analysis. 
\end{proof}

In addition to this classification, we recall several vital facts about Lucas numbers (see Section 2 of  \cite{BHV}).
It is important to know about
their relative divisibility properties. 

\begin{prop}[Prop. 2.1 (ii) of \cite{BHV}]\label{PropA}  If $d\mid n$, then $u_d(\alpha, \beta) | u_n(\alpha,\beta).$
\end{prop}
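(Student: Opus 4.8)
The plan is to prove the divisibility directly from the closed form $u_n = (\alpha^n - \beta^n)/(\alpha-\beta)$ by exhibiting an explicit cofactor and checking that it is a rational integer. Writing $n = dq$ with $q \in \Z^{+}$, I would set $X := \alpha^d$ and $Y := \beta^d$ and invoke the elementary factorization
$X^q - Y^q = (X-Y)(X^{q-1} + X^{q-2}Y + \cdots + Y^{q-1})$.
Substituting back gives $\alpha^n - \beta^n = (\alpha^d - \beta^d)\,S$, where $S := \sum_{j=0}^{q-1} \alpha^{dj}\beta^{d(q-1-j)}$.

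Dividing both sides by $\alpha - \beta$ and recognizing that $(\alpha^d - \beta^d)/(\alpha-\beta) = u_d(\alpha,\beta)$, I obtain the identity $u_n(\alpha,\beta) = u_d(\alpha,\beta)\cdot S$. Everything then reduces to verifying that the cofactor $S$ is an integer. To see this, I would observe that $S$ is symmetric in $\alpha$ and $\beta$: the involution $\alpha \leftrightarrow \beta$ sends the $j$-th summand to the $(q-1-j)$-th summand, so the entire sum is fixed. By the fundamental theorem of symmetric polynomials, $S$ is therefore a polynomial with integer coefficients in the elementary symmetric functions $\alpha + \beta = A$ and $\alpha\beta = B$, both of which are nonzero integers by the hypotheses defining a Lucas sequence. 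Hence $S \in \Z$, and the identity $u_n(\alpha,\beta) = u_d(\alpha,\beta)\cdot S$ yields $u_d(\alpha,\beta) \mid u_n(\alpha,\beta)$.

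The argument presents no genuine obstacle; the only point requiring care is the passage from ``$S$ is a symmetric algebraic integer'' to ``$S$ is a rational integer,'' which is handled cleanly by the symmetric-function expansion in $A$ and $B$ rather than by a separate integrality estimate. Finally, note that $u_d(\alpha,\beta)\neq 0$ for every $d\geq 1$, since $\alpha^d=\beta^d$ would force $\alpha/\beta$ to be a root of unity, contrary to assumption; thus the asserted divisibility is non-degenerate.
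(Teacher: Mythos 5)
Your proof is correct and complete: the factorization $\alpha^{dq}-\beta^{dq}=(\alpha^d-\beta^d)S$ together with the symmetric-function argument showing $S\in\Z[A,B]\subseteq\Z$ is exactly the standard classical proof of this divisibility, and the paper itself offers no internal argument to compare against, since it imports the statement verbatim as Proposition 2.1 (ii) of the cited work of Bilu, Hanrot, and Voutier. One small remark: your closing observation that $u_d(\alpha,\beta)\neq 0$ is a nice sanity check but is not logically needed, because the identity $u_n = u_d\cdot S$ with $S\in\Z$ already establishes $u_d \mid u_n$ in $\Z$ regardless of vanishing.
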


To keep track of the first occurrence of prime divisors, we let $m_{\ell}(\alpha,\beta)$ be the smallest $n\geq 2$
for which $\ell \mid u_n(\alpha,\beta)$. We note that $m_{\ell}(\alpha,\beta)=2$ if and only if
$\alpha +\beta\equiv 0\pmod {\ell}.$

\begin{prop}[Cor. 2.2\footnote{This corollary is stated for Lehmer numbers. The conclusions hold for Lucas numbers because $\ell \nmid (\alpha+\beta)$.} of \cite{BHV}]\label{PropB} If $\ell\nmid \alpha \beta$ is an odd prime with
$m_{\ell}(\alpha,\beta)>2$, then the following are true.
\begin{enumerate}
\item If $\ell \mid (\alpha-\beta)^2$, then $m_{\ell}(\alpha,\beta)=\ell.$
\item If $\ell \nmid (\alpha-\beta)^2$, then $m_{\ell}(\alpha,\beta) \mid (\ell-1)$ or $m_{\ell}(\alpha,\beta)\mid (\ell+1).$
\end{enumerate}
\end{prop}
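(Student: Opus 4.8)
The plan is to reduce the proposition to two classical congruences for the index at which $\ell$ first divides a Lucas number, and then to invoke the rank-of-apparition divisibility $\ell\mid u_n(\alpha,\beta)\iff m_\ell(\alpha,\beta)\mid n$. Throughout I write $D:=(\alpha-\beta)^2=A^2-4B$ for the discriminant (with $A=\alpha+\beta$, $B=\alpha\beta$), and I fix a prime $\mathfrak{l}$ above $\ell$ in the ring of integers of $\mathbb{Q}(\alpha)$. Since $\ell\nmid B=\alpha\beta$, neither $\alpha$ nor $\beta$ is $\equiv 0\pmod{\mathfrak{l}}$, so both are units mod $\mathfrak{l}$; note also that $\ell\mid D$ together with $\ell\nmid B$ forces $\ell\nmid A$, consistent with the hypothesis $m_\ell>2$.

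First I would establish, for every $n\geq 1$, that $\ell\mid u_n(\alpha,\beta)$ if and only if $m_\ell(\alpha,\beta)\mid n$. One direction is immediate from Proposition~\ref{PropA}: if $m_\ell\mid n$ then $u_{m_\ell}\mid u_n$, and $\ell\mid u_{m_\ell}$ by definition of $m_\ell$. For the converse I would pass to the companion matrix $M=\left(\begin{smallmatrix}0&1\\-B&A\end{smallmatrix}\right)\in\operatorname{GL}_2(\mathbb{F}_\ell)$, which is invertible because $\ell\nmid B$. An easy induction on the recursion $u_{n+1}=Au_n-Bu_{n-1}$ gives $M^n\equiv\left(\begin{smallmatrix}-Bu_{n-1}&u_n\\-Bu_n&u_{n+1}\end{smallmatrix}\right)\pmod{\ell}$, so that $\ell\mid u_n$ exactly when $M^n$ is lower triangular. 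Since the lower-triangular matrices in $\operatorname{GL}_2(\mathbb{F}_\ell)$ are closed under products and inverses and contain $M^0=I$, the set $\{n\in\mathbb{Z}:\ell\mid u_n\}$ is a subgroup of $\mathbb{Z}$; as $u_0=0$ and $u_1=1$, its least positive element is $m_\ell$, whence the subgroup equals $m_\ell\mathbb{Z}$ and the claim follows.

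Next I would compute the relevant vanishing indices modulo $\ell$ by separating the splitting behavior of $X^2-AX+B$ modulo $\mathfrak{l}$. If $\ell\nmid D$ and $\leg{D}{\ell}=1$, then $\alpha,\beta\in\mathbb{F}_\ell^\times$ and Fermat gives $\alpha^{\ell-1}\equiv\beta^{\ell-1}\equiv 1$, so $(\alpha-\beta)u_{\ell-1}=\alpha^{\ell-1}-\beta^{\ell-1}\equiv 0\pmod{\mathfrak{l}}$; dividing by the unit $\alpha-\beta$ yields $\ell\mid u_{\ell-1}$, hence $m_\ell\mid(\ell-1)$. If $\ell\nmid D$ and $\leg{D}{\ell}=-1$, then $\alpha,\beta\in\mathbb{F}_{\ell^2}\setminus\mathbb{F}_\ell$ are interchanged by Frobenius, so $\alpha^\ell\equiv\beta$ and $\alpha^{\ell+1}\equiv\alpha\beta\equiv\beta^{\ell+1}$, giving $\ell\mid u_{\ell+1}$ and $m_\ell\mid(\ell+1)$ in the same way; this proves part (2). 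Finally, when $\ell\mid D$, a binomial expansion of $\alpha^\ell-\beta^\ell$ using $\alpha,\beta=(A\pm\sqrt{D})/2$, together with $\binom{\ell}{j}\equiv 0$ for $0<j<\ell$ and $2^{\ell-1}\equiv 1$, yields $u_\ell\equiv D^{(\ell-1)/2}\equiv\leg{D}{\ell}\pmod{\ell}$, which is $0$ in the ramified case. Thus $m_\ell\mid\ell$, and since $\ell$ is prime with $m_\ell>1$ this forces $m_\ell=\ell$, proving part (1).

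I expect the subtle point to be the converse direction of the rank-of-apparition statement: the divisibility $u_d\mid u_n$ of Proposition~\ref{PropA} only gives one implication, and it is the subgroup argument above (equivalently, the gcd identity $\gcd(u_m,u_n)=|u_{\gcd(m,n)}|$, valid here because $\gcd(A,B)=1$) that forces $m_\ell$ to \emph{divide} the indices $\ell$ and $\ell\mp 1$ produced by the congruences, rather than merely to be bounded by them. The congruence computations are otherwise routine, the only care being to divide out the factor $\alpha-\beta$, which is a unit modulo $\mathfrak{l}$ precisely when $\ell\nmid D$.
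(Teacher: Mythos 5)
Your argument is correct, but there is essentially no proof in the paper to compare it against: Proposition~\ref{PropB} is imported wholesale as Corollary~2.2 of \cite{BHV}, with only a footnote transferring the statement from Lehmer numbers to Lucas numbers via the observation that $m_{\ell}(\alpha,\beta)>2$ forces $\ell\nmid(\alpha+\beta)$. What you have supplied is a self-contained proof of the classical law of apparition, and its structure is sound. Your key lemma --- that $\{n\in\mathbb{Z}:\ell\mid u_n(\alpha,\beta)\}$ is a subgroup of $\mathbb{Z}$, proved by observing that the companion matrix $M$ is invertible modulo $\ell$ (here $\ell\nmid B$ is used) and that $\ell\mid u_n$ exactly when $M^n$ lies in the Borel subgroup of $\operatorname{GL}_2(\mathbb{F}_\ell)$ --- is precisely the point that Proposition~\ref{PropA} alone cannot deliver, and you are right to flag it as the crux: the proposition asserts divisibility of $\ell\mp 1$ and $\ell$ by $m_\ell$, not merely that $\ell$ divides some term with one of those indices. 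The three congruences you derive ($\ell\mid u_{\ell-1}$ when $\leg{D}{\ell}=1$ via Fermat, $\ell\mid u_{\ell+1}$ when $\leg{D}{\ell}=-1$ via the Frobenius swap, and $u_\ell\equiv\leg{D}{\ell}\pmod{\ell}$ via the binomial expansion) are the standard Lucas analogues of Fermat's little theorem and Euler's criterion, and they actually yield a refinement of part~(2) --- the Legendre symbol decides which of $\ell-1$, $\ell+1$ occurs --- which \cite{BHV} also record but which the paper never needs. The trade-off is clear: the citation keeps the paper lean and is consistent with its reliance on the \cite{BHV} machinery elsewhere (Theorem~\ref{Bilu} and the defect tables), while your route makes the proposition elementary and independent of that reference. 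Two small points to tighten: when $\leg{D}{\ell}=1$ you should say that the \emph{reductions} of $\alpha,\beta$ modulo $\mathfrak{l}$ lie in $\mathbb{F}_\ell^{\times}$, and when you pass from $\mathfrak{l}\mid u_{\ell\mp1}$ to $\ell\mid u_{\ell\mp1}$ you are implicitly using $u_n\in\mathbb{Z}$ and $\mathfrak{l}\cap\mathbb{Z}=\ell\mathbb{Z}$; likewise it is worth stating explicitly that $\alpha\not\equiv\beta\pmod{\mathfrak{l}}$ is exactly the condition $\ell\nmid D$, which is what licenses dividing by $\alpha-\beta$ in both unramified cases (your closing remark gets this right). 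Your parenthetical alternative via $\gcd(u_m,u_n)=|u_{\gcd(m,n)}|$ is also valid here since $\gcd(A,B)=1$, but the subgroup argument you actually run is cleaner.
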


\begin{remark}
If $\ell \mid \alpha \beta$, then either $\ell \mid u_n(\alpha,\beta)$ for all $n$, 
or $\ell \nmid u_n(\alpha,\beta)$ for all $n$.
\end{remark}

\subsection{Prime divisors of newform coefficients}\label{SectionPrimeDivisors}

Throughout this paper we suppose that
\begin{equation}\label{qexpansion}
f(z)=q+\sum_{n=2}^{\infty}a_f(n)q^n\in S_{2k}(\Gamma_0(N)) \cap \Z[[q]]
\end{equation}
is an even weight $2k$ newform.
Let $S_f$ be the finite (generally empty) set of primes $p$ for which $(A,B)=(a_f(p),p^{2k-1})$ appears in Tables \ref{table1} or \ref{table2}. 
For primes $p\not \in S_f$ and $m\geq 1$, we let
\begin{equation}
\widehat{\sigma}(p;m):=\sigma_0(m+1)-1,
\end{equation}
while for $p\in S_f$ we define
$\widehat{\sigma}(p;m)$  in Table \ref{table3} in the Appendix.
We have the following theorem.

\begin{thm}\label{PrimeDivisorsGeneral} Assume the notation and hypotheses above.
If $n>1$ is only divisble by ordinary primes, then
$$
\Omega(a_f(n))\geq \sum_{p\mid N} (k-1)\ord_p(n) +\sum_{\substack{p\nmid N \\ \ord_p(n)\geq 2}} \widehat{\sigma}(p;\ord_p(n)).
$$
\end{thm}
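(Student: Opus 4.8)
The plan is to reduce the statement about $\Omega(a_f(n))$ to a statement about Lucas numbers by exploiting the Hecke recursion, and then to invoke the primitive-prime-divisor theory from the previous subsection. First I would use multiplicativity of the Hecke eigenvalues to localize at each prime $p\mid n$. Writing $n=\prod_p p^{\ord_p(n)}$, the coefficient factors as $a_f(n)=\prod_p a_f(p^{\ord_p(n)})$, so that
\begin{equation}
\Omega(a_f(n))=\sum_{p\mid n}\Omega\bigl(a_f(p^{\ord_p(n)})\bigr).
\end{equation}
Thus it suffices to bound $\Omega(a_f(p^e))$ for each prime power $p^e\mid\mid n$ with $e=\ord_p(n)$, and to show that this local contribution is at least $(k-1)e$ when $p\mid N$, and at least $\widehat{\sigma}(p;e)$ when $p\nmid N$ and $e\ge 2$ (with the convention that $e=1$ contributes nothing via $\widehat{\sigma}(p;2-1)$... i.e.\ the $\sigma_0(2)-1=1$ term only enters for $e\ge 2$).

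For the split prime case $p\nmid N$, the key observation is that the sequence $\{a_f(p^e)\}_{e\ge 0}$ satisfies exactly the Lucas recursion attached to $F(X)=X^2-a_f(p)X+p^{2k-1}$: if $\alpha,\beta$ are the roots, then $a_f(p^e)=u_{e+1}(\alpha,\beta)=(\alpha^{e+1}-\beta^{e+1})/(\alpha-\beta)$, where $\alpha+\beta=a_f(p)$ and $\alpha\beta=p^{2k-1}$ are coprime (since $p\nmid N$ forces $p\nmid a_f(p)$) and $\alpha/\beta$ is not a root of unity by the Ramanujan--Deligne bound. So $a_f(p^e)=u_{e+1}(\alpha,\beta)$ is a genuine Lucas number. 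Now I would invoke Proposition~\ref{PropA}: for each divisor $d\mid (e+1)$ we have $u_d\mid u_{e+1}$, and by Theorem~\ref{Bilu} together with the classification in Theorem~\ref{AwesomeList}, each non-defective $u_d$ with $d>2$ contributes a primitive prime divisor not dividing any $u_{d'}$ with $d'<d$. Since distinct divisors $d$ of $e+1$ with $d>2$ yield primitive primes that are pairwise distinct (a primitive divisor of $u_d$ cannot divide $u_{d'}$ for $d'\neq d$ both proper), counting these gives at least $\sigma_0(e+1)-1=\widehat{\sigma}(p;e)$ prime factors in the non-defective situation; the remaining correction, handling the finitely many defective $u_d$ enumerated in Tables~\ref{table1} and~\ref{table2} via Lemma~\ref{Part1DefectivePrimality}, is absorbed into the tabulated values $\widehat{\sigma}(p;e)$ for $p\in S_f$ recorded in Table~\ref{table3}.

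For the ramified case $p\mid N$, the recursion degenerates: for $p\mid N$ one has $a_f(p^e)=a_f(p)^e$, and since $a_f(p)\in\{0,\pm p^{k-1}\}$ (the Atkin--Lehner eigenvalue situation, with $|a_f(p)|=p^{k-1}$ when $p\mid\mid N$), we get $\Omega(a_f(p^e))=\Omega(p^{(k-1)e})=(k-1)e$, which is exactly the claimed bound. Summing the local bounds over all $p\mid n$ and recalling that primes with $\ord_p(n)=1$ and $p\nmid N$ contribute $u_2=a_f(p)$, whose absence of a primitive divisor we do not penalize, yields the stated inequality. The main obstacle, and the step requiring genuine care, is the counting argument that distinct divisors $d\mid(e+1)$ produce \emph{distinct} primitive prime divisors: I must verify that a primitive prime divisor of $u_d$ is not counted again for a multiple $d''$ of $d$, and cleanly account for the defective indices so that the crude count $\sigma_0(e+1)-1$ is not overstated. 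This is precisely where the BHV classification and the bookkeeping in $\widehat{\sigma}(p;m)$ for $p\in S_f$ do the essential work, and isolating exactly which tabulated entries are needed is the delicate part.
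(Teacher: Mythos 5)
Your route is exactly the paper's: localize with Theorem~\ref{Newforms}~(1), identify $a_f(p^e)=u_{e+1}(\alpha_p,\beta_p)$ via Theorem~\ref{Newforms}~(3), harvest one primitive prime divisor for each divisor $d\mid (e+1)$ with $d>2$ using Proposition~\ref{PropA} and Theorem~\ref{Bilu} (your distinctness worry is resolved by the definition itself: a primitive divisor of $u_d$ divides no $u_{d'}$ with $d'<d$, so the primes attached to distinct divisors are automatically distinct), correct for the defective indices via Tables~\ref{table1}--\ref{table3}, and handle $p\mid N$ through Theorem~\ref{Newforms}~(4). This is precisely how the paper deduces the theorem from Proposition~\ref{PrimePower}. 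However, there is one genuine error in your verification of the Lucas hypotheses. You justify the coprimality of $\alpha_p+\beta_p=a_f(p)$ and $\alpha_p\beta_p=p^{2k-1}$ by asserting that $p\nmid N$ forces $p\nmid a_f(p)$; this is false. For $\Delta$ we have $N=1$ and $\tau(2)=-24$, so $p=2$ satisfies $p\nmid N$ but $p\mid a_f(p)$; likewise your claim that the Deligne bound prevents $\alpha_p/\beta_p$ from being a root of unity fails when $a_f(p)=0$ (as happens for CM newforms at inert primes), where $\alpha_p/\beta_p=-1$ while the bound $|a_f(p)|\leq 2p^{(2k-1)/2}$ holds trivially. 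In these cases $(A,B)$ is not a Lucas pair, the BHV machinery does not apply, and your count of primitive divisors collapses; the paper is alert to this and splits off exactly this case in its proof of Theorem~\ref{Lehmer135} (``if $p\mid \tau(p)$, then $p^m\mid \tau(p^m)$'').

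The gap is local and repairable without new ideas: if $p\mid a_f(p)$, the recursion $a_f(p^m)=a_f(p)a_f(p^{m-1})-p^{2k-1}a_f(p^{m-2})$ gives by induction
\begin{equation*}
\ord_p\bigl(a_f(p^m)\bigr)\ \geq\ \lceil m/2\rceil\ \geq\ \sigma_0(m+1)-1,
\end{equation*}
where the second inequality is checked directly for small $m$ and follows from $\sigma_0(m+1)\leq 2\sqrt{m+1}$ in general (and if some $a_f(p^m)=0$ the asserted inequality for $a_f(n)$ is vacuous). So the required bound $\widehat{\sigma}(p;m)$ holds in the non-coprime case by $p$-divisibility alone, and once these primes are set aside your argument coincides with the paper's. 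You should state this split explicitly rather than deriving coprimality from $p\nmid N$; with that correction the proposal is a faithful reconstruction of the paper's proof.
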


\begin{remark} 
Theorem~\ref{PrimeDivisorsGeneral} does not take into account those primes $p\nmid N$ which
exactly divide $n$ because it can happen that $|a_f(p)|=1$. 
However,  if the mod 2 residual Galois representation
is trivial, then $a_f(p)$ is even for every prime $p\nmid 2N$. In such cases, we get
$$
\Omega(a_f(n))\geq \sum_{p\mid N} (k-1)\ord_p(n) +\sum_{p\nmid 2N} \widehat{\sigma}(p;\ord_p(n)).
$$
This  applies to $\Delta(z)$, by the congruence
$\Delta(z)\equiv \sum_{n=0}^{\infty}q^{(2n+1)^2}\pmod 2.$
Since $(A,B)=(\tau(p),p^{11})$ does not appear in Lemma~\ref{Part1DefectivePrimality} (1), the proof of Theorem~\ref{PrimeDivisorsGeneral} gives
Theorem~\ref{PrimeDivisors}.
\end{remark}

\subsection{Proof of Theorem~\ref{PrimeDivisorsGeneral}}

We recall some basic facts about {\it Atkin-Lehner newforms} 
(see \cite{AtkinLehner, Miyake}), along with the deep theorem of Deligne  \cite{Deligne1, Deligne2} that bounds their Fourier coefficients. 

\begin{thm}\label{Newforms} Suppose that $f(z)=q+\sum_{n=2}^{\infty}a_f(n)q^n\in S_{2k}(\Gamma_0(N))$ is a newform with integer coefficients.
Then the following are true:
\begin{enumerate}
\item If $\gcd(n_1,n_2)=1,$ then $a_f(n_1 n_2)=a_f(n_1)a_f(n_2).$
\item If $p\nmid N$ is prime and $m\geq 2$, then
$$
a_f(p^m)=a_f(p)a_f(p^{m-1}) -p^{2k-1}a_f(p^{m-2}).
$$
\item If $p\nmid N$ is prime and $\alpha_p$ and $\beta_p$ are roots of $F_p(x):=x^2-a_f(p)x+p^{2k-1},$ then
$$
   a_f(p^m)=u_{m+1}(\alpha_p,\beta_p)=\frac{\alpha_p^{m+1}-\beta_p^{m+1}}{\alpha_p-\beta_p}.
$$   
Moreover, we have $|a_f(p)|\leq 2p^{\frac{2k-1}{2}}$, and $\alpha_p$ and $\beta_p$ are complex conjugates.

\item If $p\mid N$ is prime, then 
$f | U(p) := \sum_{n=1}^{\infty} a_f(np)q^n = a_f(p) f(\tau).$
Moreover, we have 
$$
a_f(p^m)=\begin{cases} (\pm 1)^m p^{(k-1)m} \ \ \ \ \  &{\text {\rm if}}\ \ord_p(N)=1,\\
0 \ \ \ \ \ &{\text {\rm if}}\ \ord_p(N)\geq 2.
\end{cases}
$$
\end{enumerate}
\end{thm}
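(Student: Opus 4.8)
The plan is to prove Theorem~\ref{Newforms} by assembling the standard structural facts about Atkin--Lehner newforms together with Deligne's bound, treating the four parts in the order listed. Parts (1) and (2) are the classical Hecke multiplicativity relations: since $f$ is a normalized Hecke eigenform, for every prime $p\nmid N$ and every $m\ge 1$ the Hecke operator identity $T(p^m)=T(p)T(p^{m-1})-p^{2k-1}T(p^{m-2})$ applied to $f$ yields the three-term recursion in (2), and the coprime multiplicativity in (1) follows from the fact that the $T(n)$ for pairwise coprime $n$ commute and act diagonally on the one-dimensional eigenspace spanned by $f$. First I would simply cite \cite{AtkinLehner, Miyake} for these eigenform relations rather than rederive them, since they are entirely standard.

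For part (3), the idea is to solve the linear recursion in (2) explicitly. With $\alpha_p,\beta_p$ the roots of $F_p(x)=x^2-a_f(p)x+p^{2k-1}$, one checks directly by induction that $u_{m+1}(\alpha_p,\beta_p)=(\alpha_p^{m+1}-\beta_p^{m+1})/(\alpha_p-\beta_p)$ satisfies the same recursion and the same initial conditions $a_f(p^0)=1$, $a_f(p^1)=a_f(p)=\alpha_p+\beta_p$, so the two sequences coincide. The bound $|a_f(p)|\le 2p^{(2k-1)/2}$ is exactly the Ramanujan--Petersson bound, which is Deligne's theorem \cite{Deligne1, Deligne2}; this forces the discriminant $a_f(p)^2-4p^{2k-1}\le 0$, so $\alpha_p$ and $\beta_p$ are complex conjugate with $|\alpha_p|=|\beta_p|=p^{(2k-1)/2}$. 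Here one must note that $\alpha_p/\beta_p$ is not a root of unity (strict inequality, or the non-CM argument) so that the Lucas sequence framework of Section~\ref{PPD} genuinely applies; this compatibility with the hypotheses of Theorem~\ref{Bilu} is the one point deserving care.

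Part (4) concerns the ramified primes $p\mid N$. The action $f\mid U(p)=a_f(p)f$ is the statement that a newform is a $U(p)$-eigenform at primes dividing the level, again standard Atkin--Lehner theory \cite{AtkinLehner}. From $a_f(p^m)=a_f(p)^m$ (iterating $U(p)$) together with the known value of $a_f(p)$ at a ramified prime---namely $|a_f(p)|=p^{k-1}$ when $\ord_p(N)=1$ and $a_f(p)=0$ when $\ord_p(N)\ge 2$---the displayed formula drops out immediately. I would cite the standard computation of these eigenvalues (e.g.\ from \cite{Miyake}) rather than reproduce it.

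The main obstacle is not any single computation but rather the bookkeeping of exactly which classical results are being invoked and ensuring the hypotheses match: in particular that $f$ is genuinely a newform (so the one-dimensionality and the Atkin--Lehner eigenvalue structure hold) and that Deligne's bound is stated in the correct normalization for weight $2k$. Since every ingredient is a cited theorem, the ``proof'' is essentially an organized recollection, and the real content of the section is that part (3) packages $a_f(p^m)$ as a Lucas number $u_{m+1}(\alpha_p,\beta_p)$, which is precisely the bridge that lets the Bilu--Hanrot--Voutier machinery of Theorem~\ref{Bilu} be applied to Fourier coefficients in the proof of Theorem~\ref{PrimeDivisorsGeneral}.
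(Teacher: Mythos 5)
Your proposal is correct and matches the paper's treatment: the paper states Theorem~\ref{Newforms} as a recollection of standard facts, citing Atkin--Lehner theory \cite{AtkinLehner, Miyake} for the Hecke relations and $U(p)$-eigenvalue structure and Deligne \cite{Deligne1, Deligne2} for the bound, exactly as you do, with part (3) obtained by solving the recursion in (2). Your added remark that one should verify $\alpha_p/\beta_p$ is not a root of unity before invoking the Lucas framework is a sensible point of care that the paper handles implicitly (via integrality of $a_f(p)$ and the odd prime power $p^{2k-1}$, together with the separate treatment of primes with $p\mid a_f(p)$), but it does not constitute a different route.
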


Theorem~\ref{Newforms} leads to lower bounds for
 the number of prime divisors (counted with multiplicity) of the coefficients in the sequence $\{a_f(p^2),a_f(p^3),\dots\}$, where $p$ is prime.

\begin{prop}\label{PrimePower} Assuming the notation in Theorem~\ref{Newforms},  the following are true for $m\geq 2$.
\begin{enumerate}
\item If $p\mid N$ is prime, then $\ord_p(a_f(p^m))\geq (k-1)m.$
\item Suppose that $p\nmid N$ is prime. If $(A,B)=(a_f(p),p^{2k-1})$ does not appear in Tables \ref{table1} or \ref{table2}, then
$$
\Omega(a_f(p^m))\geq \sigma_0(m+1)-1.
$$
\item Suppose that $p\nmid N$ is prime. If $(A,B)=(a_f(p),p^{2k-1})$ appears in Tables \ref{table1} or \ref{table2}, then Table \ref{table3} of the Appendix contains a lower bound
for $\Omega(a_f(p^m))$.
\end{enumerate}
\end{prop}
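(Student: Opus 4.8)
The plan is to prove Proposition~\ref{PrimePower} by treating the three cases separately, exploiting the explicit description of $a_f(p^m)$ furnished by Theorem~\ref{Newforms}. For part (1), where $p\mid N$, the situation is elementary: by Theorem~\ref{Newforms} (4), either $a_f(p^m)=0$ (when $\ord_p(N)\geq 2$, in which case the inequality on $\ord_p$ is vacuously true, interpreting $\ord_p(0)=\infty$) or $a_f(p^m)=(\pm 1)^m p^{(k-1)m}$ (when $\ord_p(N)=1$). In the latter case $\ord_p(a_f(p^m))=(k-1)m$ exactly, so the claimed bound $\ord_p(a_f(p^m))\geq (k-1)m$ holds immediately. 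I would write this out in one or two lines.

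For parts (2) and (3), where $p\nmid N$, the key identification is Theorem~\ref{Newforms} (3): we have $a_f(p^m)=u_{m+1}(\alpha_p,\beta_p)$, a Lucas number for the pair $(\alpha_p,\beta_p)$ arising from $F_p(x)=x^2-a_f(p)x+p^{2k-1}$, which is precisely a polynomial of the shape (\ref{Modularity}) with $A=a_f(p)$, $B=p^{2k-1}$, and $|A|\leq 2\sqrt{B}$ by the Ramanujan--Deligne bound. Since $\alpha_p/\beta_p$ is not a root of unity (the two roots are distinct complex conjugates of equal absolute value $p^{(2k-1)/2}$, and their ratio has absolute value $1$ but is not a root of unity because $\alpha_p+\beta_p=a_f(p)$ is a rational integer with $|a_f(p)|<2p^{(2k-1)/2}$ unless $a_f(p)=0$; the root-of-unity ratios would force $a_f(p)\in\{0,\pm p^{(2k-1)/2}\cdot(\text{specific values})\}$, which one checks are excluded or handled among the defective tables), the hypotheses for Lucas sequences are satisfied and Theorem~\ref{Bilu} applies.

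The heart of the argument is the counting in part (2). Here I would argue as follows: for each divisor $d\mid (m+1)$ with $d>2$, Proposition~\ref{PropA} gives $u_d(\alpha_p,\beta_p)\mid u_{m+1}(\alpha_p,\beta_p)=a_f(p^m)$, and since $(A,B)=(a_f(p),p^{2k-1})$ does not appear in Tables~\ref{table1} or \ref{table2}, each such $u_d$ is \emph{non-defective} and hence possesses a primitive prime divisor $\ell_d$. By the definition of primitive prime divisor, $\ell_d\nmid u_{d'}(\alpha_p,\beta_p)$ for every $d'<d$; in particular the primes $\ell_d$ associated to distinct divisors $d$ are distinct. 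This produces at least one prime factor of $a_f(p^m)$ for each divisor $d\mid(m+1)$ with $d>2$, and these are pairwise distinct. Counting divisors, the number of divisors $d\mid(m+1)$ with $d\geq 3$ is $\sigma_0(m+1)-1$ when $m+1$ is odd and also works out correctly once one accounts for $d=2$ and $d=1$: the divisor $d=1$ gives $u_1=1$ (no prime), and $d=2$ gives $u_2=\alpha_p+\beta_p$ whose prime divisors we are instructed not to count as primitive. The bookkeeping yields $\Omega(a_f(p^m))\geq \sigma_0(m+1)-1$. The main obstacle I anticipate is making this divisor-counting completely rigorous, namely verifying that subtracting exactly one from $\sigma_0(m+1)$ correctly accounts for the single trivial/uncounted divisor and that no double-counting of primes occurs when $m+1$ has repeated prime factors; the primitivity property is exactly what rules out double-counting, so the argument is clean once stated carefully.

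Finally, for part (3), where $(A,B)$ \emph{does} appear in Tables~\ref{table1} or \ref{table2}, the generic counting above can fail precisely because some $u_d$ along the way is defective and may contribute no new prime. In this case I would not attempt a uniform formula; instead the lower bound for $\Omega(a_f(p^m))$ is read off from the explicit data in Table~\ref{table3}, which has been compiled by inspecting each defective entry from Theorem~\ref{AwesomeList} and determining, for that specific $(\alpha_p,\beta_p)$, how many guaranteed prime divisors survive at each index. Thus part (3) reduces to citing Table~\ref{table3}, with the justification that the table was constructed by applying the primitive-divisor count of part (2) to all non-defective indices $d\mid(m+1)$ and handling the finitely many defective indices by direct examination. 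The entire proposition therefore follows from Theorem~\ref{Bilu}, Proposition~\ref{PropA}, and the classification in Theorem~\ref{AwesomeList}.
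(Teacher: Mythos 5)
Your route is the same as the paper's---part (1) from Theorem~\ref{Newforms} (4), part (2) from the identification $a_f(p^m)=u_{m+1}(\alpha_p,\beta_p)$ together with Proposition~\ref{PropA} and Theorem~\ref{Bilu}, part (3) by citing Table~\ref{table3}---but your divisor count in part (2) has a concrete slip when $m+1$ is even. You harvest one primitive prime for each divisor $d\mid(m+1)$ with $d>2$ and explicitly decline to count anything from $u_2$ (``whose prime divisors we are instructed not to count as primitive''). That yields only $\sigma_0(m+1)-2$ primes when $2\mid(m+1)$, not the claimed $\sigma_0(m+1)-1$; the sentence ``the bookkeeping yields'' conceals exactly this deficit. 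The paper's count is different: the only divisor of $m+1$ that contributes nothing is $d=1$, since $u_1=1$, so $d=2$ \emph{is} counted. The point you missed is that a prime divisor $\ell$ of $u_2=a_f(p)$, while typically not \emph{primitive} (the footnote only says its absence is not a defect), is still a genuine prime divisor of $u_{m+1}=a_f(p^m)$ whenever $2\mid(m+1)$, by Proposition~\ref{PropA}, and it is automatically distinct from every primitive prime $\ell_d$ attached to a divisor $d\geq 3$, precisely because primitivity of $\ell_d$ means $\ell_d\nmid u_2$. Counting this prime restores the total to $(\sigma_0(m+1)-2)+1=\sigma_0(m+1)-1$, and in the odd case your count was already right.

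Two smaller points. First, for $u_2$ to contribute one needs $|a_f(p)|>1$; in the paper's applications this is supplied by the trivial mod $2$ residual representation hypothesis, under which $a_f(p)$ is even for $p\nmid 2N$ (the paper's own Remark after Theorem~\ref{PrimeDivisorsGeneral} acknowledges that $|a_f(p)|=1$ can occur), so a careful writeup should flag this. Second, the Lucas framework requires $\gcd(\alpha_p+\beta_p,\alpha_p\beta_p)=1$, i.e.\ $p\nmid a_f(p)$; your proposal verifies the root-of-unity condition but never this coprimality. When $p\mid a_f(p)$ (including $a_f(p)=0$, which is exactly the case where $\alpha_p/\beta_p=-1$ really is a root of unity), the recursion in Theorem~\ref{Newforms} (2) gives $\ord_p(a_f(p^m))\geq m\geq \sigma_0(m+1)-1$ directly, so the bound survives but by a separate elementary argument, not via \cite{BHV}. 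Your parts (1) and (3) match the paper's proof.
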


\begin{proof}[Proof of Proposition~\ref{PrimePower}]
The first claim follows from Theorem~\ref{Newforms} (4). The second claim follows from Theorem~\ref{Newforms} (3), 
Proposition~\ref{PropA} 
and Theorem~\ref{Bilu} in a case-by-case analysis. The point is that at least one new prime divisor is accumulated with each subsequent step in a Lucas sequence. In other words,  the relative divisibility of Lucas numbers and the presence of primitive prime divisors guarantees the lower bound. The only divisor of $m+1$ which does not contribute is $u_1=1$.
The third claim follows similarly by taking into account the  defective Lucas numbers
that appear in Tables \ref{table1} and \ref{table2}.
\end{proof}

\begin{proof}[Proof of Theorem~\ref{PrimeDivisorsGeneral}]
The theorem follows from Theorem~\ref{Newforms} (1)  and Proposition~\ref{PrimePower}.
\end{proof}

\section{Variations of Lehmer's Speculation}\label{SectionLehmer}

Regarding coefficients of newforms satisfying (\ref{qexpansion}), we classify those $n$ for which
$|a_f(n)|=\ell$ is an odd prime. For the remainder of the paper, we assume that all newforms have weight $2k\geq 4$.
We first determine when $|a_f(n)|=1$. Define the set
\begin{equation}
\mathcal{U}_f:=\begin{cases} \{1, 4\}  \  \ \ \ \ &{\text {\rm if}}\ a_f(2)=\pm 3,\ 2k=4, {\text {\rm and}}\ N\ {\text {\rm odd}}\},\\
\{1\} \ \ \ \ &{\text {\rm otherwise.}}
\end{cases}
\end{equation}
\begin{prop}\label{One}
Suppose that the mod 2 residual Galois representation for $f(z)$ is trivial. Then we have
 $|a_f(n)|=1$ if and only if $n\in \mathcal{U}_f.$
\end{prop}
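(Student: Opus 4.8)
The plan is to reduce the statement $|a_f(n)|=1$ to the corresponding statement about individual prime-power coefficients, via Hecke multiplicativity, and then to run through the classification already assembled in Lemma~\ref{Part1DefectivePrimality} (1). First I would write $n=\prod_p p^{e_p}$ and use Theorem~\ref{Newforms} (1) to factor $a_f(n)=\prod_p a_f(p^{e_p})$. Since the factors are integers, $|a_f(n)|=1$ forces $|a_f(p^{e_p})|=1$ for every prime $p\mid n$ simultaneously. Thus it suffices to determine, for each prime $p$ and each exponent $e\geq 1$, exactly when $|a_f(p^{e})|=1$, and then to check which combinations can occur together.

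For the prime-power analysis I would separate the two regimes of Theorem~\ref{Newforms}. If $p\mid N$, then part (4) gives $|a_f(p^{e})|=p^{(k-1)e}$ when $\ord_p(N)=1$ and $a_f(p^e)=0$ for $e\geq 2$ when $\ord_p(N)\geq 2$; since $2k\geq 4$ we have $k\geq 2$, so $p^{(k-1)e}>1$, and neither case yields absolute value $1$. Hence no prime dividing the level can contribute a factor of $\pm 1$, and in particular $n$ cannot be divisible by such a $p$. If $p\nmid N$, I would use part (3) to identify $a_f(p^{e})=u_{e+1}(\alpha_p,\beta_p)$ as a Lucas number of the type (\ref{Modularity}) with $(A,B)=(a_f(p),p^{2k-1})$. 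For the exponent $e=1$ the value is $u_2=a_f(p)$, and here I invoke the trivial mod 2 residual Galois representation: $a_f(p)$ is even for all $p\nmid 2N$, so $|a_f(p)|\neq 1$ for such $p$; the prime $p=2$ (when $2\nmid N$) must be handled separately, but $|a_f(2)|=1$ is impossible since $a_f(2)$ is even. For $e\geq 2$, the value $u_{e+1}$ has $e+1\geq 3$, so by Theorem~\ref{Bilu} it has a primitive prime divisor unless it is \emph{defective}, meaning $|u_{e+1}|=1$ can only occur among the defective cases catalogued in Lemma~\ref{Part1DefectivePrimality} (1).

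The remaining work is to extract from Lemma~\ref{Part1DefectivePrimality} (1) exactly those entries compatible with the constraints $B=p^{2k-1}$ and $|A|\leq 2p^{(2k-1)/2}$ and with $p$ being the relevant prime, and to match them against the weight and level hypotheses. Running through the listed triples $(A,B,n)$, the condition $B=p^{2k-1}$ with $2k\geq 4$ eliminates the small-$B$ sporadic cases except those whose $B$ is an admissible odd prime power; I expect the surviving case to be precisely $(A,B,n)=(\pm 3,2^3,3)$, corresponding to $p=2$, $2k-1=3$ (so $2k=4$), $a_f(2)=\pm 3$, and $e+1=n=3$, i.e. the coefficient $a_f(2^2)=a_f(4)$ with $|a_f(4)|=1$. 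This is exactly the extra element $4$ placed in $\mathcal{U}_f$ under the conditions $a_f(2)=\pm 3$, $2k=4$, and $N$ odd (the oddness of $N$ being what allows $2\nmid N$ so that part (3) applies at $p=2$). Finally I would assemble these facts: the only way $|a_f(n)|=1$ is to have $n$ built from prime powers each of absolute value $1$, and the analysis shows the only possibilities are $n=1$ (the empty product) and, in the exceptional weight-$4$ odd-level situation with $a_f(2)=\pm3$, the single additional value $n=4$; no product of two such nontrivial factors can arise because the only candidate prime is $2$. The main obstacle is the careful bookkeeping in this last step: verifying that the defectivity tables admit no other $(A,B,n)$ with $B$ a genuine odd prime power in the allowed Hasse range, and confirming that the evenness of $a_f(p)$ rules out all $e=1$ contributions, so that $\mathcal{U}_f$ is exactly as claimed.
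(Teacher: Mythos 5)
Your proposal follows the paper's proof essentially step for step: Hecke multiplicativity reduces the problem to prime powers; Theorem~\ref{Newforms}~(4) eliminates $p\mid N$ (the paper routes this through Proposition~\ref{PrimePower}~(1), which is the same content); evenness of $a_f(p)$ for $p\nmid 2N$ eliminates exponent $e=1$ at odd primes away from the level; Theorem~\ref{Bilu} together with Lemma~\ref{Part1DefectivePrimality}~(1) reduces $e\geq 2$ to the defective classification; and the constraint $B=p^{2k-1}$ with $2k\geq 4$ isolates $(A,B,n)=(\pm 3, 2^3, 3)$, giving $a_f(4)=a_f(2)^2-2^3=1$ in weight $4$ with $N$ odd, exactly as the paper concludes. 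Your final bookkeeping (that no product of two nontrivial unit-valued prime-power factors can occur, since the only candidate prime is $2$) is implicit in the paper's appeal to multiplicativity.

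One local misstep deserves attention: you dispose of the case $p=2$, $e=1$ by asserting that $|a_f(2)|=1$ is impossible ``since $a_f(2)$ is even.'' The trivial mod $2$ residual representation hypothesis gives evenness only for $p\nmid 2N$; it imposes no parity condition on $a_f(2)$, and indeed your own exceptional case has $a_f(2)=\pm 3$, which is odd --- so this justification is internally inconsistent with the rest of your argument. Moreover, the Lucas machinery cannot fill this hole: $u_2=\alpha+\beta$ is precisely the term excluded from the notion of defect (see the footnote to Theorem~\ref{Bilu}), so Lemma~\ref{Part1DefectivePrimality}~(1) says nothing about $|a_f(2)|=1$. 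To be fair, the paper's own proof is equally silent here: it restricts to $m\geq 2$ by citing evenness for $p\nmid 2N$ and never addresses $|a_f(2)|$ when $N$ is odd. So you have reproduced the paper's argument faithfully, this elision included; but if you want the step to stand on its own, you should either supply a separate argument ruling out $a_f(2)=\pm 1$ for the newforms under consideration or flag explicitly that, at odd level, the case $n=2$ requires an input beyond the evenness hypothesis, rather than appealing to a parity statement that fails at $p=2$.
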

\begin{proof}
By multiplicativity (i.e. Theorem~\ref{Newforms} (1)), it suffices to
determine when $|a_f(p^m)|=1$, where $p$ is prime.
By Proposition~\ref{PrimePower} (1), we have $p\nmid N.$ By Theorem~\ref{Newforms}~(3), it suffices to determine when
the $|u_{m+1}(\alpha_p,\beta_p)|=1,$ where $m\geq 2.$ Indeed, $a_f(p)=u_2(\alpha_p,\beta_p)$ is even for  $p\nmid 2N$.
By Theorem~\ref{Bilu}, this reduces to
Lemma~\ref{Part1DefectivePrimality} (1).
The defective cases $(A,B,n)=(\pm 3, 2^3,3)$  correspond to potential weight 4 newforms, while the remaining possibilities
are for weight 2. In the weight 4 cases we have $a_f(2)=\pm 3$, which  gives $a_f(4)=a_f(2)^2-2^3=1.$
\end{proof}

\begin{thm}\label{LehmerVariantGeneral}
Suppose that the mod 2 residual Galois representation for $f(z)$ is trivial, and that $\ell \nmid a_f(\ell).$ If $|a_f(n)|=\ell^m,$  with $m\in \Z^{+}$ and $\ell$ is an odd prime, then 
$n= m_0p^{d-1}$, where $m_0\in \mathcal{U}_f$, $p\nmid N$ is prime, and $d \mid \ell (\ell^2-1)$
 is an odd prime.
 Moreover, $|a_f(n)|=\ell^m$ for finitely many (if any) $n$. 
\end{thm}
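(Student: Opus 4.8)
The plan is to combine the multiplicativity of newform coefficients (Theorem~\ref{Newforms}~(1)) with the Lucas-sequence machinery already developed. First I would use multiplicativity to reduce the problem to prime-power arguments: write $n = \prod_i p_i^{e_i}$, so that $|a_f(n)| = \prod_i |a_f(p_i^{e_i})|$. For this product to be a prime power $\ell^m$, each factor $|a_f(p_i^{e_i})|$ must itself be a power of $\ell$ (possibly $\ell^0 = 1$). By Proposition~\ref{One}, the factors equal to $1$ are exactly those with $p_i^{e_i} \in \mathcal{U}_f$, which forces all but at most one prime-power block to lie in $\mathcal{U}_f$; this is the source of the $m_0 \in \mathcal{U}_f$ prefactor. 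For the nontrivial block, say $p^{e}$ with $|a_f(p^e)| = \ell^m > 1$, Proposition~\ref{PrimePower}~(1) rules out $p \mid N$ (which would give a power of $p$, not of $\ell$), so $p \nmid N$ and I can write $a_f(p^e) = u_{e+1}(\alpha_p, \beta_p)$ via Theorem~\ref{Newforms}~(3).

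The heart of the argument is to constrain $e+1$. Set $\nu := e+1$ and suppose $|u_\nu(\alpha_p,\beta_p)| = \ell^m$. The key observation is that $u_\nu$ must be \emph{defective} except in a very controlled way: if $\nu$ had a proper divisor $d$ with $2 < d < \nu$, then by Proposition~\ref{PropA} the Lucas number $u_d$ divides $u_\nu$, and by Theorem~\ref{Bilu} (for $d > 30$) or the classification of Theorem~\ref{AwesomeList} and Lemma~\ref{Part1DefectivePrimality} (for small $d$) the intermediate terms contribute primitive prime divisors distinct from $\ell$, contradicting that $|a_f(p^e)|$ is a pure power of $\ell$. Pushing this divisibility analysis shows that $\nu$ must be prime: any composite $\nu$ produces at least two distinct prime divisors among the $u_d$ for $d \mid \nu$, forcing $|a_f(p^e)|$ to have a prime factor other than $\ell$. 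So I would argue $d := \nu = e+1$ is prime, giving $n = m_0 p^{d-1}$.

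Next I would pin down which prime $d$ can occur. Since $\ell \mid u_d(\alpha_p, \beta_p)$ and $d = m_\ell(\alpha_p,\beta_p)$ is the index of first occurrence (any earlier occurrence would again inject an extra prime factor or force a defect ruled out above), Proposition~\ref{PropB} applies with $m_\ell(\alpha_p,\beta_p) = d > 2$. It yields precisely the trichotomy: either $d = \ell$ (when $\ell \mid (\alpha_p-\beta_p)^2$), or $d \mid (\ell - 1)$, or $d \mid (\ell + 1)$. In every case $d \mid \ell(\ell^2 - 1) = \ell(\ell-1)(\ell+1)$, which is the claimed divisibility. I should also check the boundary value $d = 2$, i.e. $|a_f(p)| = \ell$; but $a_f(p)$ is even for $p \nmid 2N$ by the trivial mod $2$ hypothesis, so $|a_f(p)| = \ell$ with $\ell$ odd is impossible, consistent with requiring $d$ to be an odd prime.

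Finally, the finiteness assertion. Having reduced to $n = m_0 p^{d-1}$ with $d$ ranging over the finitely many odd prime divisors of $\ell(\ell^2-1)$, I would fix one such $d$ and note that $a_f(p^{d-1}) = u_d(\alpha_p, \beta_p)$ is a fixed integral form of degree $d-1$ in the two symmetric functions $A = a_f(p)$ and $B = p^{2k-1}$, subject to $|A| \le 2\sqrt{B}$. Setting this equal to $\pm \ell^m$ and homogenizing gives a Thue-type or (hyper)elliptic equation in the variables $(a_f(p), p)$; for $2k \ge 4$ the associated curve has genus $\ge 1$, so by Siegel's theorem it has only finitely many integer points, bounding the admissible primes $p$ and hence the $n$. \emph{The main obstacle} I anticipate is the bookkeeping in the middle step: rigorously excluding every composite value of $\nu = e+1$ requires carefully tracking, via Tables~\ref{table1}--\ref{table2} and Lemma~\ref{Part1DefectivePrimality}, the finitely many defective Lucas numbers satisfying \eqref{Modularity} so that no sporadic defect sneaks in an extra power of $\ell$ without a second prime — this case analysis, rather than any single deep theorem, is where the real care is needed.
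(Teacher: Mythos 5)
Your proposal is correct and takes essentially the same route as the paper's proof: reduction to prime powers via multiplicativity and Proposition~\ref{One}, exclusion of $p\mid N$ and of even index $d$, the first-occurrence/primitive-divisor argument (Theorem~\ref{Bilu}, Proposition~\ref{PropA}, Lemma~\ref{Part1DefectivePrimality}) forcing $d$ to be an odd prime, Proposition~\ref{PropB} for $d\mid \ell(\ell^2-1)$, and Siegel's theorem for finiteness. Your closing caveat about tracking the defective cases is exactly where the paper's argument also concentrates its care, resolving it via Lemma~\ref{Part1DefectivePrimality}~(2), which (since $a_f(p)$ is even and $\ell$ is odd) leaves only the possibility $u_3=\pm 3^r$, consistent with the conclusion $d=\ell=3$.
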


\begin{proof}[Proof of Theorem~\ref{LehmerVariation} and \ref{LehmerVariantGeneral}]
By Proposition~\ref{One} and Theorem~\ref{Newforms} (1) and (4), it suffices to determine when
$|a_f(p^{d-1})|=|u_{d}(\alpha_p,\beta_p)|=\ell$, where $p\nmid N$ is prime. Since  $2k\geq 4,$
$\ell$ is odd, and $A=a_f(p)$ is even, Lemma~\ref{Part1DefectivePrimality} (2) leaves the defective possibilities
 $(A,B,\ell,n)=(\pm m,p^{2k-1},3,3)$, which by Theorem~\ref{Newforms} (2), implies that $(p,a_f(p))$ is an integer point on
$Y^2=X^{2k-1}\pm 3.$
This means that $u_3(\alpha_p,\beta_p)=a_f(p^2)=\pm 3$, which 
is the claimed conclusion with $d=\ell=3$.

Now we consider whether a prime power can be
a nondefective Lucas number $u_{d}(\alpha_p,\beta_p)=a_f(p^{d-1})$, for primes $p\nmid 2N$.
Since $a_f(p)$ is even, we may assume that $\ell \nmid \alpha_p \beta_p$ and $m_{\ell}(\alpha_p,\beta_p)>2$. Moreover, Theorem~\ref{Newforms} (2) implies
that $a_f(p^b)$ is odd if and only if $b$ is even, and so we may assume that $d$ is odd.
Proposition~\ref{PropB} implies that $m_{\ell}(\alpha_p,\beta_p)=\ell$ or $m_{\ell}(\alpha_p,\beta_p) | (\ell-1)$ or
$m_{\ell}(\alpha_p,\beta_p) | (\ell+1)$.

Due to the generic presence of primitive prime divisors, a Lucas number that is a prime power $\ell^m$ in absolute
value is the first multiple of $\ell$ in the sequence.
By Theorem~\ref{Bilu}, Proposition~\ref{PropA}, and Lemma~\ref{Part1DefectivePrimality} (2), this
holds for every sequence satisfying (\ref{Modularity}) for weights $2k\geq 4$.
In particular, $d$ is an odd prime. 
The finiteness of the number of $p$ for which $|a_f(p^{d-1})|=\ell$, follows from Siegel's Theorem, that positive genus curves  have at most finitely many integer points. These curves are easily assembled using Theorem~\ref{Newforms} (2) (see Lemma~\ref{DiophantineCriterion}).
\end{proof}

\section{Integral Points on some curves}\label{IntegerPoints}
To prove Theorems~\ref{Lehmer135} and \ref{LehmerGeneral}, we require knowledge of the integer points on certain curves.

\subsection{Some Thue equations}
An equation of the form
$F(X,Y)=D,
$
where $F(X,Y)\in \Z[X,Y]$ is homogeneous and $D$ is a non-zero integer, is known as a {\it Thue equation}.
We require such equations that arise
from the generating function
\begin{equation}\label{genfunction}
\frac{1}{1-\sqrt{Y}T+XT^2}=\sum_{m=0}^{\infty}F_m(X,Y)\cdot T^{m}=1+\sqrt{Y}\cdot T+(Y-X)T^2+\cdots.
\end{equation}
The first few homogenous polynomials $F_{2m}(X,Y)$ are as follows:
\begin{displaymath}
\begin{split}
F_2(X,Y)&=Y-X,\\
F_4(X,Y)&=Y^2-3XY+X^2\\
F_6(X,Y)&=Y^3-5XY^2+6X^2Y-X^3.\\
F_{10}(X,Y)&=Y^5-9XY^4+28X^2Y^3-35X^3Y^2+15X^4Y-X^5.
\end{split}
\end{displaymath}
For every positive integer $m$, we consider the degree $m$ Thue equations of the form
\begin{equation}\label{ThueEqn}
F_{2m}(X,Y)=\prod_{k=1}^m \left(Y-4X \cos^2\left(\frac{\pi k}{2m+1}\right)\right)=D.
\end{equation}

The next lemma gives integer points on several Thue equations that 
we shall require.

\begin{lem}\label{Monster} The following are true.
\begin{enumerate}
\item
Table \ref{thuetable} in the Appendix lists all of the integer solutions to
$$F_{d-1}(X,Y)=\pm\ell
$$
for every pair of odd primes $(d,\ell)$ for which $7\leq d\mid \ell(\ell^2-1)$ and $\ell \in \{ 7\leq \ell \leq 37\}$.
\item Conditional on GRH, Table \ref{thueGRHtable} in the Appendix lists all of the integer solutions to
$$
F_{d-1}(X,Y)=\pm\ell
$$
for every pair of odd primes $(d,\ell)$ for which $7\leq d\mid \ell(\ell^2-1)$ and $41\leq \ell \leq 97.$
\item There are no integer solutions to
$F_{22}(X,Y)=\pm 691.$
\item The points $(\pm 1, \pm 4)$ are the only integer solutions to
$F_{690}(X,Y)=\pm 691.$
\end{enumerate}
\end{lem}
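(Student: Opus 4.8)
The four parts split naturally according to the degree $m=(d-1)/2$ of the Thue form $F_{d-1}(X,Y)$ and according to which machinery survives at that scale. The unifying observation is that the generating function \eqref{genfunction} identifies $F_{2m}(X,Y)$ with the Lucas number $u_{2m+1}(\alpha,\beta)$ under the substitution $X=\alpha\beta$, $Y=(\alpha+\beta)^2$, since $1-\sqrt Y\,T+XT^2=(1-\alpha T)(1-\beta T)$. Thus $F_{d-1}(X,Y)=\pm\ell$ is precisely the condition $|u_d(\alpha,\beta)|=\ell$, which lets me feed Theorem~\ref{Bilu} and Propositions~\ref{PropA} and \ref{PropB} into the analysis. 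Throughout I would use the real factorization \eqref{ThueEqn}, $F_{2m}(X,Y)=\prod_{k=1}^m\bigl(Y-4X\cos^2(\pi k/(2m+1))\bigr)$, so that each form splits completely over the maximal real subfield $\mathbb{Q}(\zeta_{2m+1})^{+}$ of the $(2m+1)$-st cyclotomic field.

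\emph{Parts (1)--(3): moderate degree, direct solution.} First I would enumerate the finitely many admissible pairs $(d,\ell)$: for $\ell\in\{3,\dots,37\}$ (resp.\ $41\le\ell\le97$, resp.\ $\ell=691$) I list the odd primes $7\le d\mid \ell(\ell^2-1)$, which fixes the degree $m=(d-1)/2$. For each pair I solve $F_{d-1}(X,Y)=\pm\ell$ by the standard Thue machinery: realize the form over $\mathbb{Q}(\zeta_d)^{+}$, reduce to finitely many $S$-unit equations using the class group and a system of fundamental (cyclotomic) units, bound the exponents by Baker's theory of linear forms in logarithms, and reduce those bounds to an exhaustive search by LLL, exactly as in Bugeaud--Mignotte--Siksek \cite{BMS}. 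For part (1) the relevant fields are small enough that the class-number and unit data can be certified unconditionally via the Minkowski bound, yielding Table~\ref{thuetable}. For part (2) the fields attached to the larger $d$ (e.g.\ $d=97$, giving degree $48$) are too large to certify this way, so I would invoke GRH to justify the class-group/unit computations through the conditional Bach bound, giving Table~\ref{thueGRHtable}. Part (3) is the single pair $(d,\ell)=(23,691)$, a degree-$11$ equation $F_{22}(X,Y)=\pm691$ over $\mathbb{Q}(\zeta_{23})^{+}$; here the same algorithm runs unconditionally and returns no solutions.

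\emph{Part (4): the genuinely hard case.} Here $(d,\ell)=(691,691)$ gives the degree-$345$ form $F_{690}(X,Y)=\pm691$, far beyond what a black-box Thue solver can certify, so I must exploit the tiny right-hand side together with the Lucas structure. The plan has three ingredients. (a) Structural constraint: a solution corresponds to $|u_{691}(\alpha,\beta)|=691$, whose only prime factor is $691$; by Theorem~\ref{Bilu} ($691>30$) the number $u_{691}$ has a primitive prime divisor, necessarily $691$ itself, and Proposition~\ref{PropB} then forces the ramified case $691\mid(\alpha-\beta)^2=Y-4X$. (b) The degenerate locus: from \eqref{genfunction}, $1-2\sqrt X\,T+XT^2=(1-\sqrt X\,T)^2$ gives $F_{2m}(X,4X)=(2m+1)X^m$, so $F_{690}(X,4X)=691\,X^{345}$, which equals $\pm691$ exactly when $X=\pm1$, producing $(1,4)$ and $(-1,-4)$. (c) A size estimate: using the real factorization and $|u_{691}|=|\alpha|^{690}\cdot|\sin(691\theta)/\sin\theta|$ in the relevant ranges, together with the congruence from (a), I would show that no integer point off this degenerate locus can keep $|F_{690}(X,Y)|$ as small as $691$, reducing the problem to a finite, explicitly bounded search. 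I would lean on the classical-Lucas-sequence facts and the perfect-power results of Cohn \cite{Cohn} and Barros \cite{Barros} for the residual small cases.

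I expect step (c) to be the main obstacle. The archimedean bound is delicate precisely in the complex-conjugate regime $Y<4X$, where $u_{691}=X^{345}\sin(691\theta)/\sin\theta$ with $\theta=\arccos\sqrt{Y/(4X)}$ can be made small by pushing $\theta$ very close to a zero $\pi j/691$; ruling out such a conspiracy for integer $(X,Y)$ with large $X$ is exactly the point where a crude size bound (and even Liouville's inequality) fails, and where one must bring in the effective primitive-part estimates underlying Theorem~\ref{Bilu}, or the arithmetic of $\mathbb{Q}(\zeta_{691})^{+}$, to force $|X|\le1$. Once $|X|\le1$ is secured the finite verification is immediate and the conclusion $(X,Y)\in\{(1,4),(-1,-4)\}$ follows.
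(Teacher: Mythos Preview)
Your treatment of parts (1)--(3) is fine and matches the paper's approach: the \texttt{PARI/GP} Thue solver implements exactly the Baker--LLL pipeline you describe, with the GRH dependence in (2) arising from the need to certify class groups and units in the large real cyclotomic subfields.

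The gap is in part (4). Your steps (a) and (b) are correct and pleasant observations --- the primitive-divisor argument does force $691\mid Y-4X$, and the degenerate locus $Y=4X$ does account for $(\pm1,\pm4)$ --- but (c) is the entire content of the problem, and you do not actually carry it out. You yourself flag that the archimedean estimate $|u_{691}|=|X|^{345}\,|\sin(691\theta)/\sin\theta|$ fails precisely when $\theta$ is driven near a zero of $\sin(691\theta)$, and you leave the resolution at the level of ``bring in the effective primitive-part estimates underlying Theorem~\ref{Bilu}, or the arithmetic of $\mathbb{Q}(\zeta_{691})^{+}$.'' That is not a proof; it is a restatement of the difficulty.

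The paper closes this gap not by new estimates but by recognizing that Bilu--Hanrot--Voutier already solved exactly these equations on the way to Theorem~\ref{Bilu}. One rewrites $F_{p-1}(X,Y)=\widehat{F}_p(X,Y-2X)$ with $\widehat{F}_p(X,Y)=\prod_{k=1}^{(p-1)/2}\bigl(Y-2X\cos(2\pi k/p)\bigr)$, and then quotes \cite[Cor.~6.6]{BHV}: for every prime $31\le p\le 787$, the equation $\widehat{F}_p(X,Y)=\pm p$ has no integer solution with $|X|>e^8$. That single citation replaces your step (c) for large $|X|$. Midsize solutions are then handled by the standard criterion (e.g.\ \cite[Lemma~1.1]{TW}, \cite[Prop.~2.2.1]{BH96}) that any such solution must arise from a convergent of the continued fraction of some $2\cos(2\pi k/691)$; a short computation excludes these. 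Finally a brute search over $|X|\le 4$ yields only $(\pm1,\pm2)$ for $\widehat{F}_{691}$, i.e.\ $(\pm1,\pm4)$ for $F_{690}$. Your congruence $691\mid Y-4X$ from (a), while true, is not needed and does not by itself bound $|X|$.
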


\begin{proof} Claims (1), (2) and (3) are easily obtained using the Thue solver in \texttt{PARI/GP} \cite{pari}
(see \cite{github} for all of the code required for this paper).

The proof of (4) is more formidable, as $F_{690}(X,Y)$ has degree 345. However, for odd primes $p$, the Thue equations $F_{p-1}(X,Y)=\pm p$
 are equivalent to the well-studied equations
\begin{equation}\label{ModifiedThue}
\widehat{F}_p(X,Y)=\prod_{k=1}^{\frac{p-1}{2}}\left(Y-2X\cos\left(\frac{2\pi k}{p}\right)\right)=\pm p
\end{equation}
that were prominent in the work of Bilu, Hanrot, and Voutier on primitive prime divisors of Lucas sequences. 
Indeed, we have  $F_{p-1}(X,Y)=\widehat{F}_p(X,Y-2X).$
They prove the important  fact
(see Cor. 6.6 of \cite{BHV}) that there are no integer solutions to (\ref{ModifiedThue})
with $|X|>e^8$  when $31\leq p\leq 787.$ By a well-known criterion (for example, see Lemma~1.1 of \cite{TW} and Proposition 2.2.1 of \cite{BH96})), midsize solutions of $\widehat{F}_{691}(X,Y)=\pm 691$  correspond to convergents of the continued fraction expansion of some 
$2\cos(2\pi k/691).$  A short calculation rules this out, possibly leaving some small solutions, those  with
 $|X|\leq 4$.  For these $X$, we find $(\pm 1, \pm 2)$, which implies that
$(\pm 1, \pm 4)$ are the only integral solutions to $F_{690}(X,Y)=\pm 691.$
\end{proof}

\subsection{The elliptic and hyperelliptic curves $Y^2=X^{2d-1}\pm \ell$}

For  $d\in \{2, 3, 4 ,6, 7\}$ and odd primes $\ell  \leq 97$, we list all of the integer points on
\begin{equation}
C_{d,\ell}^{\pm}: Y^2=X^{2d-1}\pm \ell.
\end{equation}

\begin{lem}\label{HYPER} If $3\leq \ell \leq 97$ is prime and $d\in \{2, 3, 4, 6, 7\}$, then the following are true:
\begin{enumerate}
\item Table \ref{Cplustable} in the Appendix lists the integer points on $C_{d,\ell}^{+}.$
\item Table \ref{Cminustable} in the Appendix lists the integer points on $C_{d,\ell}^{-}.$
 \end{enumerate}
\end{lem}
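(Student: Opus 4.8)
The plan is to classify the integer points on the curves
$C_{d,\ell}^{\pm}: Y^2 = X^{2d-1}\pm \ell$ for the finitely many values $d\in\{2,3,4,6,7\}$ and odd primes $3\le\ell\le 97$, organizing the work by the \emph{degree} $2d-1$ of the right-hand side. For $d=2$ one has $2d-1=3$, so $C_{2,\ell}^{\pm}$ is a genuine elliptic curve $Y^2 = X^3\pm\ell$ in (near) Weierstrass form; here I would simply hand each curve to a computer algebra system (\texttt{Magma} or \texttt{Sage}) and invoke an integral-points routine, which computes the Mordell--Weil group and then runs an elliptic-logarithm sieve to produce the provably complete list of integral points. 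For the remaining cases $d\in\{3,4,6,7\}$ the exponent $2d-1\in\{5,7,11,13\}$ is odd and at least $5$, so $Y^2=X^{2d-1}\pm\ell$ is a hyperelliptic curve of genus $g=(2d-2)/2=d-1\ge 2$, and by Faltings (or Siegel, since we only need integral points) the set of rational, hence integral, points is finite. The substance of the lemma is to make this finiteness \emph{effective and exhaustive}.

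For the hyperelliptic cases the method of choice is Chabauty--Coleman, as the introduction signals. First I would compute, for each curve, the rank of the Jacobian $J$ over $\Q$ via a $2$-descent (or a descent on the relevant quotient), together with a basis of the free part of $J(\Q)$. When the rank $r$ satisfies $r<g$, the Chabauty--Coleman bound applies: choosing a prime $p$ of good reduction, one shows that the space of Coleman integrals annihilating $J(\Q)$ is nonzero, and an upper bound of the shape
$$
\#C(\Q)\ \le\ \#C(\F_p) + 2g - 2
$$
(the classical Coleman bound, sharpened by Stoll's refinements) bounds the number of rational points. One then explicitly computes the relevant Coleman integrals $p$-adically, locates their zeros in each residue disk, and matches these against the rational points found by a naive search; when the count of found points meets the Chabauty--Coleman bound, the search is certified complete. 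I would carry this out curve by curve, recording the output in Tables~\ref{Cplustable} and \ref{Cminustable}, and where the rank condition $r<g$ fails for a given curve I would fall back on either a covering-collection (elliptic Chabauty) argument or a Mordell--Weil sieve combined with Chabauty at several primes.

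I expect the main obstacle to be precisely the cases where the Jacobian has rank $r\ge g$, so that the naive Chabauty inequality is unavailable. For a genus-$2$ curve ($d=3$) with rank $2$, for instance, one cannot directly bound $C(\Q)$ by Coleman integrals on $J$, and the standard remedy is a Mordell--Weil sieve: one combines reduction information at several primes of good reduction to rule out all residue classes except those containing the known points, possibly in tandem with a partial descent to produce auxiliary curves of smaller rank-to-genus ratio. A secondary difficulty is arranging provably correct $p$-adic precision in the Coleman-integral computations so that zeros are counted with certainty rather than merely numerically; I would address this by using the rigorous implementations of Coleman integration now available and verifying that the number of computed zeros in each residue disk does not exceed what the local analysis permits. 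Since all of this is a finite computation over an explicit finite list of $(d,\ell)$, the bookkeeping is routine once the rank data are in hand; the genuine work lies in handling the high-rank exceptional curves, and the rest is a matter of tabulating the certified output.
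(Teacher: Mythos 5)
Your strategy is viable in outline, but it is genuinely different from the paper's route, which is a two-line citation: the curves $C_{d,\ell}^{\pm}$ are instances of the Lebesgue--Ramanujan--Nagell equation $x^2+D=y^n$ with $|D|=\ell\leq 97$ (the $-$ case is $Y^2+\ell=X^{2d-1}$, i.e.\ $1\leq D\leq 100$; the $+$ case is $Y^2-\ell=X^{2d-1}$, i.e.\ negative $D$), and these equations have been completely solved in prior work --- Cohn \cite{Cohn} classically, Barros \cite{Barros} (whose appendix yields Table~\ref{Cplustable}), and Bugeaud--Mignotte--Siksek \cite{BMS} (whose appendix yields Table~\ref{Cminustable}). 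Crucially, those works resolve the equations for \emph{all} exponents $n\geq 3$ simultaneously, via the modular method (Frey curves and level lowering) combined with linear forms in logarithms, so the paper inherits unconditional and exhaustive tables with no new computation. Your curve-by-curve Chabauty--Coleman plan is exactly the technique the authors reserve for the few curves falling outside the $|D|\leq 100$ literature (Lemma~\ref{Plus691}, treating $C_{6,691}^{+}$, $H_{11,691}^{+}$, and $H_{7,89}^{+}$), so it can certainly work here; what it buys is self-containedness, at the cost of several hundred individual curves of genus up to $6$. Two caveats deserve emphasis. First, the rank computations you need --- $2$-descent on Jacobians of hyperelliptic curves of genus up to $6$ --- are expensive and, when they route through class-group computations, are typically only valid under GRH (compare the authors' own GRH-conditional rank for the genus-$3$ curve $H_{7,89}^{+}$ in Lemma~\ref{Plus691}\,(3)); your output would therefore risk being conditional precisely where the lemma's tables are unconditional. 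Second, many of these curves have positive rank, and for those with $r\geq g$ your fallback (Mordell--Weil sieve, covering collections) is considerably more than routine bookkeeping. A minor technical slip: certification does not require the number of found points to meet the bound $\#C(\Q)\leq \#C(\F_p)+2g-2$, which is rarely attained; it suffices, as in the paper's own Chabauty computations, to show that every $p$-adic zero of the annihilating Coleman integrals in each residue disk is either a known rational point or provably irrational.
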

\begin{proof}
Work by Barros \cite{Barros}, Cohn \cite{Cohn} and Bugeaud, Mignotte and Siksek \cite{BMS}
establish these claims. Table \ref{Cplustable} is assembled from the Appendix of \cite{Barros}, and Table \ref{Cminustable} is assembled from the Appendix of \cite{BMS}.
\end{proof}

\subsection{The hyperelliptic curves $Y^2=5X^{2d}\pm 4\ell$}

For $d\geq 2,$ we define the hyperelliptic curves
\begin{equation}
H^{\pm}_{d,\ell}:  Y^2 =5 X^{2d}\pm 4\ell.
\end{equation}
The following satisfying lemma classifies the integer points on $H_{d,5}^{\pm}.$

\begin{lem}\label{AnnalsCorollary}
If $\ell=5$, then the following are true.
\begin{enumerate}
\item If $d=2$ and $\ell=5$, then the only integer points on $H^{+}_{2,5}$ are $(\pm 1, \pm 5)$ and $(\pm 2, \pm 10)$.
\item If $d>2,$ then the only integer points on $H^{+}_{d,5}$ are $(\pm 1, \pm 5).$
\item If $d\geq 2,$ then $H^{-}_{d,5}$ has no integer points.
\end{enumerate}
\end{lem}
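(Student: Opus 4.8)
The plan is to clear the factor of $5$, reduce each curve to a Pell-type equation whose integer solutions are governed by the Lucas and Fibonacci sequences, and then invoke the deep classification of perfect powers in the Lucas sequence. Since only $X^{2d}$ appears, I may assume $X\geq 0$ and recover all sign combinations at the end. On either curve $5\mid Y^2$, hence $5\mid Y$; writing $Y=5Z$ and dividing through by $5$ converts $H^{+}_{d,5}$ into $(X^d)^2-5Z^2=-4$ and $H^{-}_{d,5}$ into $(X^d)^2-5Z^2=4$. Setting $W:=X^d$, the problem becomes: find all integer points on $W^2-5Z^2=\mp 4$ for which $W$ is a perfect $d$-th power with $d\geq 2$.

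Next I would parametrize these Pell solutions using the arithmetic of $\Z[\frac{1+\sqrt{5}}{2}]$. A solution of $W^2-5Z^2=\pm 4$ with $W\equiv Z\pmod 2$ yields the element $\frac{W+Z\sqrt{5}}{2}\in\Z[\phi]$, where $\phi=\frac{1+\sqrt{5}}{2}$, of norm $\frac{W^2-5Z^2}{4}=\pm 1$; thus it is a unit, so $\frac{W+Z\sqrt{5}}{2}=\pm\phi^n$ for some $n\in\Z$. Since $\phi^n=\frac{L_n+F_n\sqrt{5}}{2}$ has norm $(-1)^n$, it follows up to signs that $(W,Z)=(L_n,F_n)$, with $n$ odd for $W^2-5Z^2=-4$ (i.e. for $H^{+}_{d,5}$) and $n$ even for $W^2-5Z^2=4$ (i.e. for $H^{-}_{d,5}$).

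The crux is then that $W=X^d$ forces $L_n$ to be a perfect $d$-th power with $d\geq 2$. Here I would invoke the theorem of Bugeaud, Mignotte, and Siksek \cite{BMS} that the only perfect powers in the Lucas sequence are $L_1=1$ and $L_3=4$, both of odd index. For $H^{-}_{d,5}$ we need even index, so no $L_n$ is an admissible perfect power and the curve has no integer points, giving (3). For $H^{+}_{d,5}$ the odd-index constraint is met by $L_1=1$ and $L_3=4$, and it remains to solve $X^d\in\{1,4\}$ for $d\geq 2$. The equation $X^d=1$ has the solution $X=\pm 1$ for every $d$, with $Z=F_1=1$ and $Y=\pm 5$, producing $(\pm 1,\pm 5)$ on $H^{+}_{d,5}$ for all $d\geq 2$; the equation $X^d=4=2^2$ has an integer solution only when $d=2$, namely $X=\pm 2$, with $Z=F_3=2$ and $Y=\pm 10$, producing the extra points $(\pm 2,\pm 10)$ on $H^{+}_{2,5}$. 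This separates cases (1) and (2).

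The main obstacle is the appeal to \cite{BMS}. The reduction to a Pell equation and the parametrization of its solutions by $(L_n,F_n)$ are entirely elementary, and the final step $X^d\in\{1,4\}$ is a one-line congruence argument; but the assertion that $1$ and $4$ are the \emph{only} perfect powers among Lucas numbers is a genuinely deep input, resting on the modular method applied to associated Frey curves. Without it one could at best bound the admissible exponents $d$ rather than eliminate them outright, so the lemma is in essence a corollary of that classification.
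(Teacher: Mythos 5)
Your proposal is correct and follows essentially the same route as the paper: reduce $H^{\pm}_{d,5}$ via $Y=5Z$ to the Pell equations $X^{2d}-5Z^2=\mp 4$, whose positive solutions are parametrized by odd-index (for $+$) and even-index (for $-$) Lucas numbers $L_n$, and then invoke the Bugeaud--Mignotte--Siksek classification that $L_1=1$ and $L_3=4$ are the only perfect powers in the Lucas sequence; your write-up just makes explicit the unit-group parametrization that the paper compresses into ``by the theory of Pell's equations.'' One small correction: the perfect-powers theorem is from their Annals paper on Fibonacci and Lucas perfect powers (\cite{AnnalsFibonacci}), not the Lebesgue--Nagell paper \cite{BMS} you cite.
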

\begin{proof}
We recall the classical Lucas sequence $$\{L_n\}=\{2, 1, 3, 4, 7, 11, 18, 29, 47,76, 123, 199, 322, 521, 843,\dots\},$$ defined by
$L_0:=2$ and $L_1:=1$ and the recurrence $L_{n+2}:=L_{n+1}+L_n$ for $n\geq 0$.
A  theorem of Bugeaud, Mignotte, and Siksek \cite{AnnalsFibonacci} asserts that $L_1=1$ and $L_3=4$
are the only perfect power Lucas numbers.
By the theory of Pell's equations, the positive integer $X$-coordinate solutions to
$H^+_{1,5}$ and $H^{-}_{1,5},$
namely $\{L_1=1, L_3= 4,L_5=11,\dots\}$ and
$\{L_0=2, L_2=3, L_4=7,\dots\}$ respectively, split the Lucas numbers. The three claims follow immediately.
\end{proof}

For  primes $\ell \in \{691\}\cup\left \{ 11\leq \ell\leq 89 \ : \  {\text {\rm prime with }} \leg{\ell}{5}=1
\right\}$, we have the following lemma.

\begin{lem}\label{Hyper11_19} The following are true.
\begin{enumerate}
\item
For most\footnote{We were unable to obtain results for $H_{7,71}^{+},$
$H_{13,89}^{-},$ and any $H^{+}_{11,\ell}$ and $H^{+}_{13,\ell}.$}
$d\in \{3, 5, 7, 11, 13\}$ and primes $\ell \in \left\{ 11\leq \ell \leq 89\ : \ \leg{\ell}{5}=1\right\}$,
Table \ref{Htable} in the Appendix lists (some cases conditional on GRH) the integer points on $H^{\pm}_{d,\ell}.$
\item There are no integer points on $C^{-}_{6,691}.$
\item There are no integer points on $H^{-}_{11,691}.$
\end{enumerate}
\end{lem}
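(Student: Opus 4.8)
The plan is to exploit the same reduction to the arithmetic of $\Q(\sqrt5)$ that drives Lemma~\ref{AnnalsCorollary}. For part (1), observe that an integer point $(X,Y)$ on $H^{\pm}_{d,\ell}\colon Y^2=5X^{2d}\pm 4\ell$ satisfies $Y^2-5u^2=\pm 4\ell$ with $u:=X^d$, so $(Y,u)$ is an integer point on a Pell-type conic. Since $\Q(\sqrt5)$ has class number one and fundamental unit $\tfrac{1+\sqrt5}{2}$, the integer solutions of $Y^2-5u^2=\pm4\ell$ form finitely many orbits under multiplication by the totally positive fundamental unit, and each orbit is a binary recurrence (a shifted Lucas-type sequence). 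Hence the integer points on $H^{\pm}_{d,\ell}$ are exactly those for which some term $u$ of one of these finitely many sequences is a perfect $d$-th power. This is precisely the mechanism behind Lemma~\ref{AnnalsCorollary}: when $\ell=5$ one may clear the factor of $5$ so that $u$ is forced to be a Lucas number, and the classification of perfect-power Lucas numbers by Bugeaud--Mignotte--Siksek \cite{AnnalsFibonacci} finishes the argument. Here the relevant Lucas index is $5$, and the hypothesis $\leg{\ell}{5}=1$ is exactly the condition that $\ell$ splits in $\Q(\sqrt5)$ (equivalently $5\mid \ell^2-1$), which is what makes these norm equations solvable and matches the admissible index $d=5$ in Theorem~\ref{Newforms}~(3).

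First I would, for each admissible pair $(d,\ell)$ with $\leg{\ell}{5}=1$, enumerate the finitely many recurrence sequences solving $Y^2-5u^2=\pm4\ell$. I would then determine the perfect $d$-th power terms in each sequence. For a fixed prime exponent $d$ this is a generalized Lebesgue--Nagell problem, which I would resolve by the modular method (attaching Frey curves and applying level-lowering) supplemented by linear forms in logarithms and the computational techniques of \cite{BMS}; for the smaller genera ($d\in\{3,5,7\}$, genus $2,4,6$) one can alternatively run Chabauty--Coleman, or elliptic-curve Chabauty on a suitable quotient, when the Mordell--Weil rank is small enough. Several of the larger cases require GRH in order to certify the underlying class-group or Mordell--Weil computations (for instance through conditional bounds on the norms of the primes needed to generate these groups), and these are recorded as conditional in Table~\ref{Htable}.

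For parts (2) and (3), which isolate the case $\ell=691$ relevant to $\Delta(z)$ (weight $2k=12$, so $2k-1=11$ plays the role of $d$), I would treat $C^{-}_{6,691}\colon Y^2=X^{11}-691$ as the Lebesgue--Nagell equation $Y^2+691=X^{11}$. Factoring over $\Q(\sqrt{-691})$ and using that $11\nmid h(\Q(\sqrt{-691}))$ together with the fact that the only units are $\pm1$, the coprimality of $Y\pm\sqrt{-691}$ forces $Y+\sqrt{-691}=\pm(a+b\sqrt{-691})^{11}$ for integers $a,b$; comparing imaginary parts yields a one-variable homogeneous condition with no integer solutions. For $H^{-}_{11,691}\colon Y^2=5X^{22}-4\cdot 691$ I would apply the Pell reduction above with $u=X^{11}$, obtaining $Y^2-5u^2=-4\cdot 691$, and then show that none of the finitely many recurrence sequences contains a perfect $11$-th power, either by the modular method or by the primitive-divisor estimates of \cite{BHV} already used in Lemma~\ref{Monster}~(4).

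The main obstacle is the perfect-$d$-th-power step for the large exponents $d\in\{11,13\}$ and for the plus sign. The curves $H^{+}_{d,\ell}$ have genus $d-1\geq 10$, so direct Chabauty--Coleman is unavailable, and the modular computations needed to exclude $d$-th powers in the relevant recurrences either fail to terminate or remain inconclusive even under GRH. This is exactly the source of the exceptional cases $H^{+}_{7,71}$, $H^{-}_{13,89}$, and all $H^{+}_{11,\ell}$ and $H^{+}_{13,\ell}$ flagged in the footnote, for which no classification is claimed.
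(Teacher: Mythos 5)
The central gap is the power-detection step, which is where all the content of part (1) and part (3) lives and which your cited tools do not cover. After the (correct) reduction to $Y^2-5u^2=\pm 4\ell$ with $u=X^d$, you must decide which terms of finitely many unit-orbits in $\Z[(1+\sqrt{5})/2]$ are perfect $d$-th powers. The theorem of Bugeaud--Mignotte--Siksek \cite{AnnalsFibonacci} classifies perfect powers only in the Fibonacci and Lucas sequences themselves, which is precisely why the paper invokes it only for $\ell=5$ (Lemma~\ref{AnnalsCorollary}); for $\ell\neq 5$ the $u$-components of an orbit have the shape $(\gamma\epsilon^n-\overline{\gamma}\,\overline{\epsilon}^n)/(\epsilon-\overline{\epsilon})$ with $N(\gamma)=\pm\ell\neq\pm 1$, so they are \emph{not} Lucas sequences in the sense of \cite{BHV} (there one needs $u_1=1$ with $\alpha+\beta$, $\alpha\beta$ coprime integers), and neither the primitive-divisor theorem nor the input to Lemma~\ref{Monster}(4) (Corollary 6.6 of \cite{BHV}, a bound on solutions of the cyclotomic Thue equations $\widehat{F}_p(X,Y)=\pm p$) says anything about $d$-th powers in these orbits. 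Invoking ``the modular method'' for signature $(2,2d)$ problems with $d=11,13$ and some forty pairs $(d,\ell)$ is a research program, not an available lemma. The paper's actual proof sidesteps all of this: it factors over the quadratic field attached to the constant term, $\Q\bigl(\sqrt{\mp 4\ell}\bigr)$ (imaginary quadratic for $H^{-}_{d,\ell}$), runs the algorithm of Theorem~2.1 of \cite{Barros} (see also Proposition~3.1 of \cite{BMS}) to convert alleged solutions into integer points on finitely many explicit Thue equations built from the unit and class groups, and dispatches those with the Bilu--Hanrot solver in \texttt{PARI/GP}; GRH enters only to certify class-group computations, and parts (2) and (3) each come down to a single explicit Thue equation of degree 11, resp.\ 22, with no solutions.

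Two secondary points. First, your diagnosis of the exceptional cases is incorrect: genus is not the obstruction, because $u=X^d$ factors through the odd-degree model $Y^2=5T^{d}\pm 4\ell$ of genus $\lfloor (d-1)/2\rfloor$ via $T=X^2$, and the paper applies Chabauty--Coleman in exactly this way to $H^{+}_{11,691}$ (genus 5, rank 0) and to $H^{+}_{7,89}$ in Lemma~\ref{Plus691}; the footnoted failures instead reflect the Thue reduction becoming impractical (notably for the $+$ sign, where $\Q(\sqrt{4\ell})$ is real quadratic and the degrees are large). Second, your sketch of part (2) is sound in outline and even cleaner than the paper's generic algorithm --- $h(\Q(\sqrt{-691}))=5$ is prime to $11$ and the units are $\pm 1$, so after verifying $X$ is odd and prime to $691$ one gets $Y+\sqrt{-691}=\alpha^{11}$ --- but since $-691\equiv 1\pmod 4$ you must take $\alpha=(a+b\sqrt{-691})/2$ with $a\equiv b\pmod 2$, and comparing imaginary parts yields $b\cdot P(a,b)=\pm 2^{11}$ for a homogeneous $P$, hence finitely many one-variable checks (equivalently a degree-11 Thue equation) rather than an immediate contradiction; that finite verification, like the table computations in part (1), is asserted in your proposal but never performed. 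As written, the proposal establishes none of (1)--(3).
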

\begin{proof}
Generalized Lebesgue--Ramanujan--Nagell equations are  equations of the form
\begin{equation}\label{GLRN}
x^2+D=Cy^n,
\end{equation}
where $D$ and $C$ are non-zero integers. An integer point on (\ref{GLRN}) can be studied in the ring of integers
of $\Q(\sqrt{-D})$
using the factorization
$$(x+\sqrt{-D})(x-\sqrt{-D}) =Cy^n.
$$

This observation is a standard tool in the study of Thue equations. In particular, 
Theorem~2.1 of  \cite{Barros} (also see Proposition~3.1 of \cite{BMS}) gives a step-by-step algorithm that takes alleged solutions of (\ref{GLRN}) and produces integer points on one
of finitely many Thue equations constructed from $C, D$ and $n$ via the algebraic number theory of $\Q(\sqrt{-D})$.
 These equations are assembled from the knowledge of the group of units and the
ideal class group.

To prove all three parts of the lemma (apart from $H_{7,89}^{+}$), we implemented this algorithm in \texttt{SageMath} (see \cite{github} for all \texttt{SageMath} code required for this paper). 
Some cases required GRH as a simplifying assumption. As the curves in (2) and (3) are the most complicated, we offer brief details in these two cases.

To prove (2), we consider the  hyperelliptic curve $C^{-}_{6,691},$ which  corresponds to (\ref{GLRN}) for the class number 5 imaginary quadratic field
$\Q(\sqrt{-691})$, where $x=Y, y=X, C=1, D=691,$ and $n=11.$  In this case the algorithm gives exactly one Thue equation, which after clearing denominators can be rewritten as
\small
\begin{displaymath}
\begin{split}
2\times5^{55}&=(991077174272090396)x^{11} + (119700018439220789119)x^{10}y
- (8831599221002836172345)x^9y^2\\
&\ \ \ \ -(337116345512786456280840)x^8y^3 
+ (8492967300375371034332430)x^7y^4\\
&\ \ \ \  + (175189311986919278870504298)x^6y^5 
- (1881807368163995585644810248)x^5y^6\\
&\ \ \ \  - (22992541672786450593030038430)x^4y^7 
+ (104772541553739359102253613965)x^3y^8\\
&\ \ \ \  + (697875798749922445133117312720)x^2y^9 
- (1068801486169809452619368218519)xy^{10}\\
&\ \ \ \  - (2292300374810647823111384294421)y^{11}.
\end{split}
\end{displaymath}
\normalsize
The Thue equation solver in \texttt{PARI/GP}, which implements the Bilu--Hanrot algorithm, establishes that there are no integer solutions, and so $C^{-}_{6,691}$ has no integer points.

Claim (3) is about the hyperelliptic curve $H^{-}_{11,691}.$  Its integer points $(X,Y)$  satisfy
$$
(Y+2\sqrt{-691})(Y-2\sqrt{-691})=5X^{22}.
$$
Therefore, we again employ the imaginary quadratic field $\Q(\sqrt{-691})$. In particular, we have (\ref{GLRN}), where $x=Y, y=X, C=5, D=4\cdot 691$ and $n=22$.
 The algorithm again gives one Thue equation, which after clearing denominators can be rewritten as

\small 
\begin{displaymath}
\begin{split}
2^2\times5^{110}&=
-(20587212586465949627980680671826599752) x^{22} \\
&\ \  \ \   + (1133274396835827658613802749227310922394) x^{21} y\\ 
&\ \ \ \ +\cdots\\
 &\ \ \ \  -(79670423145107301772779399379735976309907264511718034789276856) x y^{21}\\
&\ \ \ \  +(71809437208138431262783549625248617351731199323326115439324273)y^{22}.
 \end{split}
 \end{displaymath}
\normalsize
The Thue solver in \texttt{PARI/GP} establishes that there are no integer solutions, and so $H^{-}_{11,691}$ has no integer points.
\end{proof}

We use the Chabauty--Coleman method\footnote{We could have (in theory) used the Thue method as in the proof of Lemma~\ref{Hyper11_19}. We chose this method as it did not require substantial computer resources.}, which employs $p$-adic integration to determine the rational points on suitable curves of genus $g\geq 2,$  to determine  the integer points on  $C_{6,691}^{+}$, $H_{7,89}^{+}$, and
$H_{11,691}^{+}.$

\begin{lem}\label{Plus691} The following are true.
\begin{enumerate}
\item There are no integer points on $C^{+}_{6,691}.$
\item There are no integer points on  $H^{+}_{11,691}.$
\item Assuming GRH, the only integer points on $H^{+}_{7,89}$ have $(|X|,|Y|)=(1,19).$
\end{enumerate}
\end{lem}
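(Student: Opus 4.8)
The plan is to use the Chabauty--Coleman method on each of the three curves, which is well-suited here because each curve has positive genus and we can hope to realize it as a cover of an abelian variety whose Mordell--Weil rank is strictly less than the relevant dimension. First I would write each curve in the standard hyperelliptic form $Y^2=f(X)$: for $C^{+}_{6,691}$ we have $Y^2=X^{11}+691$, a genus $5$ curve; for $H^{+}_{11,691}$ we have $Y^2=5X^{22}+4\cdot 691$, which after the substitution $U=X^2$ becomes $Y^2=5U^{11}+4\cdot 691$, a genus $5$ curve with a degree-$2$ map $X\mapsto U$; and for $H^{+}_{7,89}$ we have $Y^2=5X^{14}+4\cdot 89$, similarly covering $Y^2=5U^7+4\cdot 89$ of genus $3$. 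The key preliminary step is to compute the Jacobian $J$ of each (quotient) curve and determine its Mordell--Weil rank over $\Q$; the method requires $r=\operatorname{rank} J(\Q)<g$, where $g$ is the genus, so that a nonzero annihilating differential exists.

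Assuming the rank condition holds, the core of the argument proceeds as follows. I would choose a prime $p$ of good reduction (and, for the cases requiring it, invoke GRH only where it is needed to verify the rank bound, e.g. via a conditional computation of the class group or of $L$-values that bound the rank). Embedding the curve $C$ into its Jacobian $J$ via a rational base point, one constructs a $\Q_p$-valued Coleman integral $\eta=\int\omega$ that vanishes on $J(\Q)$ for a suitable regular differential $\omega$ annihilating the Mordell--Weil group. Since $r<g$, such an $\omega$ exists. The rational (hence integral) points of $C$ all lie in the zero locus of $\eta$ on $C(\Q_p)$, and one bounds the number of zeros of $\eta$ in each residue disk by a Newton-polygon or explicit-power-series estimate, typically giving at most $(\#\text{zeros of }\bar\omega \text{ in the disk})+1$ points per disk. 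Summing over residue disks and matching against the known rational points yields that the listed points are all of them. For the curves $H^{+}_{11,691}$ and $H^{+}_{7,89}$, one must finally pull the bound back along the degree-$2$ map $U=X^2$: each surviving $U$-value gives at most two $X$-values, and one checks by hand which (if any) are integral and whether $U$ is a perfect square.

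The main obstacle I expect is establishing the Mordell--Weil rank bound $r<g$ for these high-genus Jacobians, particularly for $C^{+}_{6,691}$ and $H^{+}_{11,691}$ where $g=5$. Computing ranks of Jacobians of genus-$5$ hyperelliptic curves is delicate: a direct $2$-descent may be computationally heavy, and one may need to descend instead on a quotient, or use the isogeny decomposition of $J$ into factors of smaller dimension and bound the rank of each factor separately. This is precisely where GRH enters for the $H^{+}_{7,89}$ case, presumably to make a class-group or descent computation feasible. A secondary difficulty is the precision bookkeeping in the Coleman integrals: one must compute the integrals to enough $p$-adic accuracy that the Newton-polygon bound on zeros per disk is rigorous, and verify that no residue disk contains an unexpected extra zero. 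Once the rank bound and the per-disk zero counts are secured, the conclusions in (1), (2), and (3) follow by comparing the total bound against the explicitly exhibited points $(|X|,|Y|)=(1,19)$ in case (3) and the empty point sets in cases (1) and (2).
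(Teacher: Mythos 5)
Your proposal is correct and takes essentially the same approach as the paper: Chabauty--Coleman applied to $C^{+}_{6,691}$ directly and to the genus $5$ and genus $3$ quotient curves $Y^2=5U^{11}+4\cdot 691$ and $Y^2=5U^{7}+4\cdot 89$ obtained via $U=X^2$ (pulling points back along the degree-$2$ map), with Mordell--Weil ranks computed by $2$-descent in \texttt{Magma} ($0$, $0$, and $2$ respectively, the last conditional on GRH exactly where you predicted), followed by Coleman integrals at the good prime $p=3$ and a residue-disk analysis. The paper confirms that the rank computations you flagged as the main obstacle go through directly; the only detail you did not spell out is that in case (3), where the rank is positive, one must exhibit explicit independent points in the Jacobian (the paper uses $P'=2P-5Q$ chosen so the computation stays over $\Q_3$) to solve for the annihilating differential.
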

\begin{proof}
We employ the Chabauty--Coleman method \cite{coleman} to determine the integral points on these curves. 

We first prove (1). The genus 5 curve $C^{+}_{6,691}$ has Jacobian with Mordell-Weil rank 0. This can be determined using the implementation of 2-descent in \texttt{Magma} \cite{magma}. Since the rank is less than the genus,  the Chabauty--Coleman method applies, which, in this case, gives a 5-dimensional space of regular 1-forms vanishing on rational points. We take as our basis for the space of annihilating differentials the set $\{\omega_i := X^i \frac{dX}{2Y}\}_{i = 0, 1, \ldots, 4}.$ The prime $p = 3$ is a prime of good reduction for $C^{+}_{6,691}$, and taking the point at infinity $\infty$ as our basepoint, we compute the set of points $$\left\{z \in C^{+}_{6,691}(\Z_3): \int_{\infty}^z \omega_i = 0\;\textrm{for all}\;{i = 0, 1, \ldots, 4}\right\},$$ where the integrals are Coleman integrals computed using \texttt{SageMath} \cite{sage}. By construction,  this set contains the integral points on the working affine model of $C^{+}_{6,691}$. 

The computation gives three points: two points with $X$-coordinate 0 and a third point with $Y$-coordinate 0 in the residue disk corresponding to $(2,0) \in C^{+}_{6,691}(\F_3)$. (Indeed, the power series corresponding to the expansion of the integral of $\omega_0$ has each of these points occurring as simple zeros.)  Hence, there are no integral points on $C^{+}_{6,691}$.

Turning to $H^{+}_{11,691}$, we consider the integral points on the curve $Y^2 = 5X^{11} + 4 \cdot 691$ and then pull back any points found using the map $(X,Y) \rightarrow (X^2,Y)$. Using \texttt{Magma}, we find that the rank of the Jacobian of this genus 5 curve is 0. We rescale variables to work with the monic model $Y^2 = X^{11}+4\cdot 5^{10}\cdot 691$ and we apply the Chabauty--Coleman method using $p = 3$. As before, the computation gives three points with coordinates in $\Z_3$: two points with $X$-coordinate 0 and a third point with $Y$-coordinate 0 in the residue disk corresponding to $(2,0)$. The power series corresponding to the expansion of the integral of $\omega_0$ has each of these points occurring as simple zeros. None of these points are rational. Therefore, $H^{+}_{11,691}$ has no integral points. This proves (2).

Now we turn to (3). To compute integral points on $H^{+}_{7,89}$, we work with the genus 3 curve $Y^2 = 5X^{7} + 4 \cdot 89$ and then pull back any integral points found using the map $(X,Y) \rightarrow (X^2,Y)$. Using \texttt{Magma}, we find that the rank of the Jacobian of this genus 3 curve is 2, under the assumption of GRH\footnote{The \texttt{Magma} procedure that computes ranks requires GRH in this case to be computationally feasible.}. We work with the monic model $$H_m: Y^2 = X^{7}+4\cdot 5^{6}\cdot 89$$ and run the Chabauty--Coleman method using $p = 3$.

The points 
$$P = [x^3 + 14x^2 - 800, 9x^2 + 200x - 4050] \qquad\textrm{and}\qquad Q = [x-5, 19 \cdot 5^3]$$
(given in Mumford representation) are independent in the Jacobian of $H_m$. To simplify the Chabauty--Coleman computation---in particular, so that we carry out all of our computations over $\Q_3$---we replace $P$ with $P'$, a small $\Z$-linear combination of $P$ and $Q$ that is linearly independent from $Q$, with the property that the first coordinate of the Mumford representation of $P'$ splits over $\Q_3$. 

We take $P' := 2P - 5Q$, with Mumford representation of $P'$ given by $[f(x),g(x)]$ where
\tiny{
\begin{align*}
f(x) &= x^3 - \frac{57819608106819190393450758001494220029312032281}{243432625872206959773347921129373894485149809}x^2 +\frac{301022057022978383553067428985393708004188803800}{81144208624068986591115973709791298161716603}x -\\
 &\qquad   
              \frac{4935244227803215636634926465657011220846146763100}{243432625872206959773347921129373894485149809}, \\
g(x)&= \frac{13467788979408324218581419111573847035681150845619031139253274307312471}{3798115572194618764136691476777323149900556269646219373513689210377}x^2 -\\  
   &\qquad            \frac{73837091689655128840131596065726589815272462202819205672839132728899500}{1266038524064872921378897158925774383300185423215406457837896403459}x +\\
 &\qquad  \frac{1249983247105360333943070938652709476597593148217064351317870016169354850}{3798115572194618764136691476777323149900556269646219373513689210377}.
\end{align*}
}
\normalsize
To compute an annihilating differential, we compute the $3\times 2$ matrix of Coleman integrals $(\int_{P'} \omega_i, \int_Q \omega_i)_{i = 0, 1, 2}$, where $\omega_i = X^i \frac{dX}{2Y}$, in \texttt{Sage}:\tiny{
$$\left(\begin{array}{rr}
2 \cdot 3 + 2 \cdot 3^{2} + 3^{4} + 2 \cdot 3^{6} + 3^{8} + 2 \cdot 3^{9} + O(3^{10}) & 3^{3} + 2 \cdot 3^{4} + 3^{7} + 2 \cdot 3^{8} + 3^{9} + O(3^{10}) \\
2 \cdot 3 + 3^{2} + 3^{3} + 2 \cdot 3^{5} + 2 \cdot 3^{6} + 2 \cdot 3^{7} + O(3^{10}) & 2 \cdot 3 + 3^{2} + 3^{3} + 2 \cdot 3^{7} + 2 \cdot 3^{8} + 3^{9} + O(3^{10}) \\
3 + 3^{2} + 2 \cdot 3^{3} + 2 \cdot 3^{4} + 2 \cdot 3^{5} + 3^{6} + 3^{7} + 2 \cdot 3^{9} + O(3^{10}) & 2 \cdot 3 + 3^{2} + 3^{3} + 2 \cdot 3^{4} + 3^{5} + 3^{7} + 2 \cdot 3^{8} + 2 \cdot 3^{9} + O(3^{10})\end{array}\right).$$}

\normalsize

We then compute a basis of the kernel of this matrix, which gives us our annihilating differential 
\begin{align*}\omega &= \omega_0 + (1 + 2 \cdot 3^{2} + 2 \cdot 3^{4} + 3^{5} + 3^{6} + 2 \cdot 3^{7} + 2 \cdot 3^{8} + 2 \cdot 3^{9} +  O(3^{10})) \omega_1 \\
&\qquad\; + (2 + 2 \cdot 3 + 3^{2} + 3^{3} + 2 \cdot 3^{4} + 3^{5} + 2 \cdot 3^{6} + 3^{9} + O(3^{10}))\omega_2.
\end{align*}

Finally, we have three residue disks to consider, corresponding to $(1,0)$ and $(2, \pm 1) \in H_{m}(\F_3)$. We compute the set of points $z \in H_m(\Z_3)$ in these residue disks such that $\int_{\infty}^z \omega = 0$. This produces three points, each occurring as simple zeros of the corresponding $3$-adic power series: a Weierstrass point and the points $(5, \pm 2375).$
The Weierstrass point is not rational, while the points $(5, \pm 2375)$ correspond to the points $( \pm 1, \pm 19)$ on $H^{+}_{7,89}$.
\end{proof}

\section{Proofs of Theorems~\ref{Lehmer135} and \ref{LehmerGeneral}}

We combine  results from the previous section with
Theorem~\ref{LehmerVariantGeneral} to prove Theorems~\ref{Lehmer135} and \ref{LehmerGeneral}.
The following lemma, which relates Fourier coefficients to special integer points on algebraic curves, is a straightforward consequence of Theorem~\ref{Newforms} (2) and (3).

\begin{lem}\label{DiophantineCriterion}
Assuming the notation in Theorem~\ref{Newforms}, if $p\nmid N$ is prime, then
we have the following:
\begin{enumerate}
\item If $a_f(p^2)=\alpha$, then $(p,a_f(p))$ is an integer point on
$$
Y^2=X^{2k-1}+\alpha.
$$
\item If $a_f(p^4)=\alpha$, then $(p,2a_f(p)^2-3p^{2k-1})$ is an integer point on
$$
Y^2=5X^{2(2k-1)}+4\alpha.
$$
\item For every positive integer $m$ we have that
$F_{2m}(p^{2k-1},a_f(p)^2)=a_f(p^{2m}).$
\end{enumerate}
\end{lem}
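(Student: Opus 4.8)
The plan is to derive all three identities directly from the Hecke recursion in Theorem~\ref{Newforms}~(2) and the Lucas-number description in Theorem~\ref{Newforms}~(3), writing $A := a_f(p)$ and $B := p^{2k-1}$ throughout. Since $p \nmid N$, part (2) of that theorem gives $a_f(p^2) = A\cdot a_f(p) - B\cdot a_f(1) = A^2 - B$, so that $A^2 = B + a_f(p^2)$. Setting $\alpha = a_f(p^2)$ shows at once that $(X,Y) = (p,A)$ satisfies $Y^2 = X^{2k-1} + \alpha$, which is claim (1).

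For claim (2) I would iterate the recursion two more times. Using $a_f(p^0) = 1$, $a_f(p^1) = A$, and $a_f(p^2) = A^2 - B$, the recursion yields $a_f(p^3) = A^3 - 2AB$ and then $a_f(p^4) = A^4 - 3A^2 B + B^2$. It then remains to verify the purely algebraic identity $(2A^2 - 3B)^2 = 5B^2 + 4(A^4 - 3A^2 B + B^2)$, where both sides expand to $4A^4 - 12A^2 B + 9B^2$. Substituting $B = p^{2k-1}$ (so $B^2 = p^{2(2k-1)}$) and $\alpha = a_f(p^4)$, this says exactly that $(p,\,2A^2 - 3B)$ lies on $Y^2 = 5X^{2(2k-1)} + 4\alpha$.

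For claim (3) I would pass to generating functions. By Theorem~\ref{Newforms}~(3) we have $a_f(p^m) = u_{m+1}(\alpha_p,\beta_p)$, and the standard partial-fraction computation gives the Lucas generating series $\sum_{n\geq 0} u_{n+1}(\alpha_p,\beta_p)\,T^n = (1 - AT + BT^2)^{-1}$, since $\alpha_p + \beta_p = A$ and $\alpha_p\beta_p = B$. Comparing with the defining series (\ref{genfunction}) for the polynomials $F_m$, one sees that substituting $\sqrt{Y}\mapsto A$ and $X \mapsto B$ into $F_m(X,Y)$ returns $u_{m+1}(\alpha_p,\beta_p)$. The only point requiring a moment's care is that $\sqrt{Y}$ must be eliminated: for \emph{even} index $2m$ the polynomial $F_{2m}(X,Y)$ involves only integer powers of $Y$ (as is visible in the displayed list $F_2, F_4, F_6, F_{10}$), so the substitution is legitimately $Y \mapsto A^2 = a_f(p)^2$ and is insensitive to the sign of $A$. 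Hence $F_{2m}(p^{2k-1}, a_f(p)^2) = u_{2m+1}(\alpha_p,\beta_p) = a_f(p^{2m})$, which is claim (3).

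The computations in claims (1) and (2) are entirely routine, so I expect no real obstacle there. The only genuine subtlety is the parity check in claim (3): one must confirm that the half-integer powers of $Y$ appearing in the generating function (\ref{genfunction}) cancel precisely on the even-indexed $F_{2m}$, which is what makes the substitution $Y = a_f(p)^2$ meaningful and sign-independent.
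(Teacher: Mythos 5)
Your proof is correct and takes essentially the same route as the paper, which gives no detailed argument but derives the lemma exactly as you do: as a ``straightforward consequence of Theorem~\ref{Newforms} (2) and (3),'' namely the Hecke recursion for claims (1) and (2) and the comparison of the Lucas generating series $\sum_{n\geq 0}u_{n+1}(\alpha_p,\beta_p)T^n=(1-a_f(p)T+p^{2k-1}T^2)^{-1}$ with (\ref{genfunction}) for claim (3). Your parity observation---that $F_{2m}(X,Y)$ contains only integer powers of $Y$, so the substitution $Y=a_f(p)^2$ is well defined and sign-independent---is precisely the detail the paper leaves implicit.
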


\begin{proof}[Proof of Theorem~\ref{Lehmer135}]
It is well-known that $\tau(n)$ is odd if and only if $n$ is an odd square. 
To see this, we employ 
 the Jacobi Triple Product identity to obtain the congruence
\begin{displaymath}
\begin{split}
\sum_{n=1}^{\infty}\tau(n)q^n:&=q\prod_{n=1}^{\infty}(1-q^n)^{24}
\equiv q\prod_{n=1}^{\infty}(1-q^{8n})^3
=\sum_{k=0}^{\infty} (-1)^k(2k+1)q^{(2k+1)^2}\pmod 2.
\end{split}
\end{displaymath}

We consider the possibility that $\pm 1$ appear in sequences of the form
\begin{equation}\label{primepowers}
\{\tau(p),\tau(p^2), \tau(p^3),\dots\}.
\end{equation}
By Theorem~\ref{Newforms} (2), if $p$ is prime and  $p\mid \tau(p)$, then $p^m\mid \tau(p^m)$ for every $m\geq 1$, and
so $|\tau(p^m)|\neq 1.$  Moreover, $|\tau(p)|\neq p$, where $p$ is an odd prime, because $\tau(p)$ is even.
Therefore, such sequences may be completely ignored for the remainder of the proof.

For primes
$p\nmid \tau(p),$ Theorem~\ref{Newforms} (3) gives a Lucas sequence 
with $A=\tau(p)$ and $B=p^{11}.$
Lemma~\ref{Part1DefectivePrimality} shows that there are no defective terms with
$u_{m+1}(\alpha_p,\beta_p)=\tau(p^m)\neq \pm 1$ or $\pm \ell$, where $\ell$ is an odd prime.
To see this, we note that $A=\tau(p)$ is even.  Lemma~\ref{Part1DefectivePrimality} (2) does not allow
for $A$ to be even with one exception, the possibility that  $(A,B,\ell,n )=(\pm m,p^{11}, 3,3)$, where $(p,\pm m)\in B_{1,6}^{1,\pm}.$
However, these curves are the same as $C_{6,3}^{\pm},$ and Lemma~\ref{HYPER}  shows that there are no such points.
Therefore, we may assume that all of 
the values in (\ref{primepowers}) have a primitive prime divisor, and  never have  absolute value 1.

We now turn to the primality of absolute values of $\tau(n)$.
Thanks to Hecke multiplicativity (i.e. Theorem~\ref{Newforms} (1)) and the discussion above, if $\ell$ is an odd prime and $|\tau(n)|=\ell$, then $n=p^d$, where $p$ is an odd prime for which
 $p\nmid \tau(p).$ The fact that $\tau(p^d)=u_{d+1}(\alpha_p,\beta_p)$ leads to a further constraint on $d$ 
(i.e. refining 
 the fact that $d$ is even).
By Proposition~\ref{PropA}, which guarantees relative divisibility between Lucas numbers, and 
 Lemma~\ref{Modularity}, which guarantees the absence of defective terms in (\ref{primepowers}), it follows that 
$d+1$ must be an odd prime, and $\tau(p^d)$ is the very first term that is divisble by $\ell$.
 
To make use of this observation, for odd primes 
$p$ and $\ell$ we define
\begin{equation}
m_{\ell}(p):=\min\{ n\geq 1\ : \ \tau(p^n)\equiv 0\!\!\!\!\pmod{\ell}\}.
\end{equation}
For $|\tau(p^d)|=\ell$, we have $m_{\ell}(p)=d,$ where $d+1$ is also an odd prime.
The Ramanujan congruences \cite{RamanujanUnpublished, Ramanujan, SerreRamanujan}
\begin{displaymath}
\tau(n)\equiv \begin{cases} &n^2\sigma_1(n)\pmod 9,\\
 &n\sigma_1(n)\pmod 5,\\
&n\sigma_3(n)\pmod 7,\\
&\sigma_{11}(n)\pmod{691},
\end{cases}
\end{displaymath}
where $\sigma_{\nu}(n):=\sum_{1\leq d\mid n}d^{\nu}$, make it simple to compute $m_{\ell}(p)$
for the primes $\ell \in \{3, 5, 7, 691\}.$

Thanks to the mod 9 congruence, we find that
$$
m_3(p)=\begin{cases} 1 \ \ \ \ \ &{\text {\rm if }} p\equiv 0, 2\!\!\!\!\pmod 3,\\
2\ \ \ \ \ &{\text {\rm if }} p\equiv 1\!\!\!\!\pmod 3.
\end{cases}
$$
Therefore, $d=2$ is the only possibility.
If $\tau(p^2)=\pm 3$, then Lemma~\ref{DiophantineCriterion} (1) implies that
 $(p,\tau(p))$ is a point on $C^{\pm}_{6,3},$ which were considered immediately above.
Again,  Lemma~\ref{HYPER} (1) implies that there are no such integer points.
 
Thanks to the mod 5 congruence, we find that
$$
m_5(p)=\begin{cases} 1 \ \ \ \ \ &{\text {\rm if }} p\equiv 0, 4\!\!\!\!\pmod{5},\\
3 \ \ \ \ \ &{\text {\rm if }} p\equiv 2, 3\!\!\!\!\pmod{5},\\
4 \ \ \ \ \ &{\text {\rm if }} p\equiv 1\!\!\!\!\pmod{5}.
\end{cases}
$$
Therefore, $d=4$ is the only possibility.
If $\tau(p^4)=\pm 5$, then Lemma~\ref{DiophantineCriterion} (2) implies that
 $(p, 2\tau(p)^2-3p^{11})$ is an integer point on $H^{\pm}_{11,5}.$
Lemma~\ref{AnnalsCorollary} shows that no such points exist on these hyperelliptic curves.

Thanks to the mod 7 congruence, we find that
$$
m_7(p)=\begin{cases}
1 \ \ \ \ \ &{\text {\rm if }} p\equiv 0, 3, 5, 6\!\!\!\!\pmod{7},\\
6 \ \ \ \ \ &{\text {\rm if }} p\equiv 1, 2, 4\!\!\!\!\pmod{7}.
\end{cases}
$$
Hence, $d=6$ is the only possibility, and so we must rule out the possibility that $\tau(p^6)=\pm 7$.
If there are such primes $p$, then Lemma~\ref{DiophantineCriterion} (3) implies
 that
$F_{6}(p^{11},\tau(p)^2)=\pm 7.$
Lemma~\ref{Monster} (1) shows that there are no such solutions to $F_6(X,Y)=\pm 7.$

Thanks to the mod 691 congruence, we find that
the only cases where $m_{691}(p)=d$ where $d+1$ is an odd prime are $d=2, 4, 22,$ and $690$.
For the cases where $d=2$ and $4$ respectively, Lemma~\ref{DiophantineCriterion} (1-2) implies that
 $(p,\tau(p))$ would be an integral point on $C^{\pm}_{6,691},$ and that $(p, 2\tau(p)^2-3p^{11})$ would be an integral point on
 $H^{\pm}_{11,691}.$ Lemma~\ref{Hyper11_19} (2-3) and Lemma~\ref{Plus691} show that no such points exist.
By Lemma~\ref{DiophantineCriterion} (3), the remaining cases (i.e. $d=22$ and $690$)  correspond to the Thue equations
$F_{22}(p^{11},\tau(p)^2)=\pm 691$ and 
$F_{690}(p^{11},\tau(p)^2)=\pm 691.$
Lemma~\ref{Monster} (3) and (4) show that there are no such integer solutions.

The arguments above show that $\tau(n)\not \in \{\pm1 , \pm 3, \pm 5, \pm 7, \pm 691\}.$
The remaining cases are special cases of Theorem~\ref{LehmerGeneral} (6) and (9) and are  proved below.
\end{proof}

\begin{proof}[Proof of Theorem~\ref{LehmerGeneral}]  By hypothesis, for primes $p\nmid 2N$ we have that
$a_f(p)$ is even. For such primes, Theorem~\ref{Newforms} (2) implies that
$a_f(p^{m})$ is odd if and only if $m$ is even.
Suppose that $p$ is a prime for which $p\mid a_f(p),$ which includes those primes $p\mid 2N$ by Theorem~\ref{Newforms} (4).  Theorem~\ref{Newforms} (2) and (4) imply that
$p^m\mid a_f(p^m)$. Therefore, we do not need to consider these coefficients in the remainder of the proof.

It suffices to consider the Lucas sequences corresponding to $A=a_f(p)$ and $B=p^{2k-1}$, when $p\nmid a_f(p)$.
By applying Lemma~\ref{Part1DefectivePrimality} (2) (as above in the proof of Theorem~\ref{Lehmer135}), we may assume that
 $\{1, a_f(p), a_f(p^2),\dots\}$ is a Lucas sequence
without any defective terms. To establish this, we must show that
$B_{1,k}^{1,\pm},$ which are the same as $C_{k,3}^{\pm},$  have no suitable integer points.
Since we only consider weights for which $\gcd(3\cdot 5\cdot 7\cdot 11\cdot 13, 2k-1)\neq 1$, it suffices
to show that $C_{d,3}^{\pm}$ has no such points for $d\in \{2, 3, 4, 6, 7\}$. Lemma~\ref{HYPER} confirms this requirement for
these ten curves.

The first claim of the theorem now follows from Proposition~\ref{One}. To prove the remaining claims we apply Theorem~\ref{LehmerVariantGeneral}.
 Namely, if $|a_f(n)|=\ell,$ then $n=p^{d-1}$, where
 $d\mid \ell (\ell^2-1)$ is an odd prime. The existence of such coefficients can be ruled out
with Lemma~\ref{DiophantineCriterion}, which reduces the proof to a case-by-case search for suitable integral points on hyperelliptic curves and solutions to Thue equations which were considered in the previous section.
If $a_f(p^2)=\pm \ell$, then  $(p,a_f(p))\in C_{k,\ell}^{\pm}$.
If $a_f(p^4)=\pm \ell$, then  $(p, 2_f(p)^2-3p^{2k-1})\in
H_{2k-1,\ell}^{\pm}$. 
Obviously, it suffices to study curves $C_{d,\ell}^{\pm}$  (resp. $H_{2d-1,\ell}^{\pm}$) with $d\mid (2k-1)$.
Finally, if $a_f(p^{d-1})=\pm \ell$  with  $d\geq 7,$ then $(p^{2k-1}, a_f(p)^2)$ is a solution to
$F_{d-1}(X,Y)=\pm \ell.$
By Lemmas~\ref{Monster}, \ref{HYPER}, \ref{AnnalsCorollary}, and \ref{Hyper11_19} (i.e. inspecting the tables in the Appendix), 
there are no such integral points (sometimes under GRH) in the cases claimed by the theorem.
\end{proof}

\section{Lehmer's speculation for large weight newforms}

We conclude this paper with the proof of Theorem~\ref{Power}. To prove this result,
we make use of Theorem~\ref{LehmerVariantGeneral}, which in turn reduces the problem
to a search for integer points on suitable curves by Lemma~\ref{DiophantineCriterion}.
Namely, we show, for each $\ell^m$, that the
finitely many Diophantine conditions have no integer solutions when the newform weights are (effectively) sufficiently large.
To derive these conclusions, we employ a deep theorem of Baker and W\"ustholz \cite{BW} in the theory
of linear forms in logarithms, and  work of Tzanakis and de Weger \cite{TW}
on Thue equations.

\subsection{Some Diophantine equations}
Here we prove some Diophantine results concerning families of Lebesgue--Ramanujan--Nagell type equations which are
of independent interest.
To make them precise,
for $\ell\in \{3, 5\}, \varepsilon\in \{\pm\},$ and $m\in \Z^{+}$, we define
\begin{equation}
T^{\varepsilon}(\ell,m):= \begin{cases} 2m+10^{32}\sqrt{m} \ \ \ \ \ &{\text {\rm if $\varepsilon=+$ and $\ell=3$}},\\
2m+10^{23}\sqrt{m} \ \ \ \ \ &{\text {\rm if $\varepsilon=-, m$ odd, and $\ell=3$}},\\
2m+10^{13}\sqrt{m} \ \ \ \ \ &{\text {\rm if $\varepsilon=-, m$ even, and $\ell=3$}},\\

3m+10^{24}\sqrt{m} \ \ \ \ \ &{\text {\rm if $\varepsilon=\pm , m$ odd, and $\ell=5$}},\\
3m+10^{30}\sqrt{m} \ \ \ \ \ &{\text {\rm if $\varepsilon=+, m$ even, and $\ell=5$}},\\
3m+10^{13}\sqrt{m} \ \ \ \ \ &{\text {\rm if $\varepsilon=-, m$ even, and $\ell=5$}}.\\                            \end{cases}
\end{equation}           
Furthermore, we define  $U^{\varepsilon}(m)$ by
                      \begin{equation}
U^{\varepsilon}(m):= \begin{cases}  3m+10 ^{24}\sqrt{m}\ \ \ \ \ &{\text {\rm if $\varepsilon=\pm $ and $m$ odd}},\\
3m+10^{30}\sqrt{m} \ \ \ \ \ &{\text {\rm if $\varepsilon=+$ and $m$ even}},\\
3m+10^{13}\sqrt{m} \ \ \ \ \ &{\text {\rm if $\varepsilon=-$ and $m$ even}}.
                                   \end{cases}
\end{equation}

\begin{thm}\label{Explicit35}
If $\ell \in \{3, 5\},$ $\varepsilon\in \{\pm \}$, and $m\in \Z^{+}$, then the following are true.
\newline
(1)  If $n>T^{\varepsilon}(\ell,m)=O_{\ell}(m),$ then there are no integer points\footnote{We switch $X$ and $Y$ here to be consistent with the literature on Lebesgue--Ramanujan--Nagell equations.} $(X,Y),$ with $Y\not \in \{0,\pm 1\}$, on
\begin{align}\label{ell3}
X^2 +\varepsilon \ell^m=Y^n.
\end{align}
\newline
\noindent
(2) If $n>U^{\varepsilon}(m)=O_{\ell}(m),$ then there are no integer points $(X,Y),$ with $Y\neq 0$, on
\begin{align}\label{ell5}
X^2+\varepsilon 4\cdot 5^m = Y^n.
\end{align}
\end{thm}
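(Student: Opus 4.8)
The plan is to bound the exponent $n$ for integer solutions of the Lebesgue--Ramanujan--Nagell equations \eqref{ell3} and \eqref{ell5} by reducing each to a linear form in logarithms and then invoking the explicit Baker--W\"ustholz lower bound. First I would factor the left-hand side in the ring of integers of the imaginary quadratic field $K=\Q(\sqrt{-\varepsilon\ell})$ (respectively $K=\Q(\sqrt{-\varepsilon})$ after pulling out the factor $4$ in \eqref{ell5}), writing $(X+\sqrt{-\varepsilon\ell^m})(X-\sqrt{-\varepsilon\ell^m})=Y^n$. Controlling the greatest common divisor of the two conjugate factors—here the hypotheses $Y\not\in\{0,\pm1\}$ and the small discriminant of $K$ keep this elementary—one shows that each factor is, up to a bounded unit and a bounded ideal coming from ramification at $\ell$ and at $2$, an $n$-th power of an ideal. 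Passing to principal ideals costs a factor of the class number $h_K$, so one extracts a genuine relation $X+\sqrt{-\varepsilon\ell^m}=u\,\pi^{a}\,\gamma^{n/\gcd(n,h_K)}$ for a fixed unit $u$, a fixed prime $\pi$ above $\ell$ (or $2$), and an algebraic integer $\gamma\in K$.

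Next I would form the linear form in two logarithms. Taking the ratio of the above relation to its complex conjugate produces an expression of the shape $\Lambda=\log\!\bigl(\bar\gamma/\gamma\bigr)\cdot\tfrac{n}{\gcd(n,h_K)}-\text{(bounded combination of }\log u,\log\pi)$, and on the archimedean side $|\Lambda|$ is extremely small—exponentially small in $n$—because $X\pm\sqrt{-\varepsilon\ell^m}$ are conjugate complex numbers of equal modulus, so their ratio lies very close to the unit circle and hence $\Lambda$ is close to an integer multiple of $2\pi i$. Quantitatively, $|\Lambda|\ll Y^{-n}$, which after taking $Y\ge 2$ gives the upper bound $\log|\Lambda|\le -c\,n$ for an explicit $c>0$. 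The heights of the algebraic numbers $\gamma,u,\pi$ are all $O_\ell(m)$, since $|\gamma|^2\asymp |Y|$ and $|X|\le Y^{n/2}+\ell^{m/2}$ force the relevant Weil heights to grow linearly in $m$; this is exactly where the explicit dependence $O_\ell(m)$ and the square-root term $\sqrt m$ in $T^\varepsilon(\ell,m)$ and $U^\varepsilon(m)$ enter.

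The heart of the argument is then the confrontation of the two estimates. The Baker--W\"ustholz theorem \cite{BW} furnishes an explicit lower bound of the form $\log|\Lambda|\ge -C(K)\cdot h(\gamma)\,h(u)\cdots\cdot\log n$, where $C(K)$ is an effectively computable constant depending only on the degree and the number of logarithms (here just two or three over $K$). Combining this with the upper bound $\log|\Lambda|\le -c\,n$ yields an inequality of the form $c\,n\le C'\cdot\bigl(\text{height}\bigr)\cdot\log n=O_\ell(m)\log n$, which forces $n/\log n=O_\ell(m)$ and hence $n=O_\ell(m)$, after the standard manipulation that turns $n\le a\log n$ into $n\le 2a\log a$. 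Tracking all constants explicitly—the class number contribution, the regulator of $K$ (trivial here since $K$ is imaginary quadratic), and the numerical value $C(K)$ for degree-two fields—produces the specific thresholds $T^\varepsilon(\ell,m)$ and $U^\varepsilon(m)$, with the leading coefficient ($2m$ or $3m$) coming from the ideal-exponent bookkeeping and the $10^{k}\sqrt m$ term from the Baker--W\"ustholz constant. For the even-$m$ cases one can use the sharper descent available when $\ell^m$ or $4\cdot5^m$ is itself a perfect square, shrinking the constant (hence the $10^{13}$ values).

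The main obstacle I anticipate is not the logarithmic-form machinery itself but the careful, case-dependent control of the algebraic factorization: one must handle the ramification of $\ell$ (and of $2$ in the $\ell=5$ case) in $\mathcal{O}_K$, the interaction of $\gcd(n,h_K)$ with the parity of $m$, and the possibility of sporadic small solutions with $|X|$ tiny, all while keeping every constant effective. In the borderline configurations—particularly $\varepsilon=+1$, $\ell=3$, $m$ odd, which yields the largest constant $10^{32}$—the class group and unit structure force the worst height estimates, so the bulk of the work lies in optimizing the Baker--W\"ustholz input for exactly these fields; I would defer the fully explicit numerics to the code accompanying the paper and present only the structural reduction here.
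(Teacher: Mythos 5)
Your outline matches the paper's strategy in broad strokes (factor in a quadratic field, pass to the ratio $\bar\beta/\beta$, get an exponentially small linear form in logarithms, confront it with Baker--W\"ustholz), but there are two genuine gaps. First, you place the whole argument in an \emph{imaginary} quadratic field and declare the regulator trivial; this is false for half the cases. When $\varepsilon=-1$ the equation reads $X^2-\ell^m=Y^n$, so the relevant field is $\Q(\sqrt{\ell})$ for $m$ odd (real quadratic, with $\Q$ itself appearing for $m$ even), and the infinite unit group cannot be swept into a ``bounded unit'': in the paper's linear forms the fundamental units $w_3=2+\sqrt{3}$ and $w_5=(1+\sqrt{5})/2$ enter with an unknown exponent $j_n$ satisfying $0\leq j_n<n-1$, so the unit contributes a full logarithm whose coefficient is of the same size as $n$ and must be fed into the Baker--W\"ustholz bound (this is why $B=\max\{|b_i|\}$ and the extra factor $h'(\overline{w}/w)$ appear there). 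Your ``bounded combination of $\log u$'' step silently assumes the imaginary case and would fail for $\varepsilon=-1$; similarly, the roots of unity in $\Q(\sqrt{-3})$ and $\Q(i)$ must be tracked (the paper's $j_6$ and $j_4$), though that part is harmless.

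Second, your quantitative bookkeeping does not deliver the stated bound. You claim the Weil heights of $\gamma,u,\pi$ grow linearly in $m$ and that this is where $O_\ell(m)$ enters; in fact the height of $\beta$ is bounded by $\max\{\log|Y|,\pi\}$ (no $m$-dependence at all), and this factor is absorbed against the decay $|Y|^{-n/2}$ in the upper bound $|\Lambda|\ll \sqrt{\ell^m}\,|Y|^{-n/2}$, so that $m$ enters only \emph{additively}, through the term $\tfrac{m}{2}\log\ell$. Moreover your closing step, ``$c\,n\le O_\ell(m)\log n$, hence $n=O_\ell(m)$ by the manipulation turning $n\le a\log n$ into $n\le 2a\log a$,'' is incorrect as stated: that manipulation yields $n=O_\ell(m\log m)$, which is weaker than the theorem's linear-in-$m$ thresholds $T^\varepsilon(\ell,m)=c_1m+c_2\sqrt{m}$. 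The correct confrontation is the one in the paper: from $\log(2.78\sqrt{\ell^m})-\tfrac{n}{2}\log 2 > -C\log(n+5)$ one replaces $\log(n+5)$ by $\sqrt{n+4}$ and solves the resulting quadratic inequality in $\sqrt{n}$, producing $n\le \tfrac{\log\ell}{\log 2}m+O(C\sqrt{m}+C^2)$; this is precisely the origin of the leading coefficients ($1.6m$, rounded to $2m$, etc.) and of the $10^{k}\sqrt{m}$ terms, which you instead attribute, incorrectly, to heights growing with $m$.
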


\subsection{A theorem of Baker and W\"ustholz}
To prove Theorem~\ref{Explicit35}, we make use of the following classical result
of Baker and W\"ustholz \cite{BW} on linear forms in logarithms.


\begin{thm}[p. 20 of \cite{BW}]\label{BW}
Let $\alpha_1,\ldots,\alpha_r$ be algebraic numbers and $b_1,\ldots,b_r$ be rational integers. If $\Lambda:=b_1\log\alpha_1+\cdots+b_r\log\alpha_r$ (note. where the logarithms have their principal values such that $-\pi<\mathrm{Im}(\log \alpha)\leq\pi$) is nonzero,
 then we have
\begin{align*}
    \log|\Lambda|>-C(r,d)\log(\mathrm{max}\left\{e,B\right\})\prod_{i=1}^{r}h'(\alpha_i),
\end{align*}
where $d:=[\mathbb{Q}(\alpha_1,\ldots,\alpha_r):\mathbb{Q}]$, $B:=\mathrm{max}\left\{|b_1|,\ldots,|b_r|\right\}$,
\begin{align*}
    C(r,d):=18(r+1)!~r^{r+1}(32d)^{r+2}\log(2rd),
\end{align*}
and $h'(\alpha):=\mathrm{max}\left\{h(\alpha)/d,|\log\alpha|/d,1/d\right\}$, where $h(\alpha)$ is the logarithmic Weil height of $\alpha$.
\end{thm}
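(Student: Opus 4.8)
The plan is to prove this by the method of Baker in transcendence theory, in the sharpened group-variety form due to Wüstholz that replaces the classical ad hoc extrapolation by a precise multiplicity (zero) estimate. The geometric framework is the commutative algebraic group $G=\mathbb{G}_a\times\mathbb{G}_m^r$, whose Lie algebra is $\C^{r+1}$ with exponential map $(z_0,z_1,\ldots,z_r)\mapsto(z_0,e^{z_1},\ldots,e^{z_r})$. One regards $u=(\log\alpha_1,\ldots,\log\alpha_r)$ as a point in $\mathrm{Lie}(\mathbb{G}_m^r)$ whose image under $\exp$ is the algebraic point $\gamma=(\alpha_1,\ldots,\alpha_r)\in G(\overline{\Q})$, and one views $\Lambda=\sum_i b_i\log\alpha_i$ as a rational linear form evaluated at $u$. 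Arguing by contradiction, I would suppose $\Lambda\neq 0$ yet $|\Lambda|$ smaller than the asserted bound, and derive an impossibility.

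First I would construct an auxiliary function by Siegel's lemma (the Thue--Siegel pigeonhole principle): produce a nonzero exponential polynomial built from monomials of the shape
\[
\Phi \ \longleftrightarrow\ z^{\lambda_0}\,\alpha_1^{\lambda_1 z}\cdots\alpha_r^{\lambda_r z},
\]
with rational-integer coefficients $p(\lambda_0,\ldots,\lambda_r)$ of controlled height, chosen so that $\Phi$ and its derivatives up to a prescribed order vanish on a block of integer points. The essential trick is that the hypothesis $|\Lambda|$ small lets one express one logarithm as a near-combination of the others, effectively lowering the number of variables; the count of free coefficients is arranged to exceed the number of vanishing conditions, and the bounds on the $p(\lambda)$ are governed by the normalized heights, which is exactly how the product $\prod_i h'(\alpha_i)$ enters.

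Next comes the extrapolation step, the analytic core. Using the smallness of $\Lambda$ together with the Schwarz lemma and maximum-modulus estimates on growing discs, one shows that $\Phi$ remains extremely small not only on the initial block but on a much larger range of points and to higher order; combined with a Liouville-type inequality (a small nonzero algebraic number cannot be too close to $0$), this forces actual vanishing. Iterating propagates the vanishing, so that $\Phi$ vanishes to high multiplicity along a long arithmetic progression. The careful choice of the parameters $L_i$ and of the vanishing orders at this stage is what produces the sharp single factor $\log(\max\{e,B\})$ rather than a higher power of $\log B$. Finally I would invoke the Philippon--Wüstholz multiplicity estimate on $\mathbb{G}_a\times\mathbb{G}_m^r$: an exponential polynomial of this shape cannot vanish to such high order at so many points unless $\gamma$ lies in a proper algebraic subgroup, i.e. unless the $\alpha_i$ satisfy a multiplicative relation forcing $\Lambda=0$, contradicting $\Lambda\neq 0$. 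Tracking all constants through the three steps yields the explicit $C(r,d)=18(r+1)!\,r^{r+1}(32d)^{r+2}\log(2rd)$.

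The hard part is the zero estimate in its sharp, explicit form: the formal extrapolation is standard, but obtaining a multiplicity estimate strong enough to deliver the clean constant $C(r,d)$ is precisely Wüstholz's deep contribution and is the genuine engine of the proof. A secondary but unavoidable difficulty is the height bookkeeping on the group variety, which must be organized so that the \emph{normalized} heights $h'(\alpha_i)$, rather than naive Weil heights, appear with the correct dependence on $d=[\Q(\alpha_1,\ldots,\alpha_r):\Q]$; getting this normalization right is what makes the product $\prod_i h'(\alpha_i)$ come out with the stated shape.
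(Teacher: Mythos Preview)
The paper does not prove this statement at all: Theorem~\ref{BW} is quoted verbatim from Baker and W\"ustholz (the citation ``p.~20 of \cite{BW}'' is the entire justification), and it is used in the paper purely as a black box in the proof of Theorem~\ref{Explicit35}. So there is no ``paper's own proof'' to compare against.

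Your sketch is a reasonable high-level outline of how Baker--W\"ustholz is actually proved in \cite{BW} --- auxiliary construction via Siegel's lemma on $\mathbb{G}_a\times\mathbb{G}_m^r$, analytic extrapolation, and the Philippon--W\"ustholz multiplicity estimate --- but for the purposes of this paper none of that is needed or expected. If you were writing this section, the correct move is simply to cite \cite{BW} and move on, exactly as the authors do.
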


This deep theorem can be applied to the  Diophantine equations in (\ref{ell3}) and (\ref{ell5}).
We shall now assume that $n$ is fixed for the remainder of this discussion.
Namely, we view potential integer points as factorizations, in the ring of integers
of  the quadratic fields $K=\Q(\sqrt{-\varepsilon \ell^m}),$
given by
\begin{align*}
(X+\sqrt{-\varepsilon \ell^m})(X-\sqrt{-\varepsilon \ell^m}) =Y^n \ \  \ {\text {\rm and}}\ \  \
(X+2\sqrt{-\varepsilon \ell^m})(X-2\sqrt{-\varepsilon \ell^m}) =Y^n.
\end{align*}
Namely, if $[K:\Q]=2$ and $h_K=1$, then we have $\beta\in\mathcal{O}_K$ such that $N_{K/\mathbb{Q}}(\beta)=Y$ and 
\begin{align*}
    (X+\sqrt{-\varepsilon \ell^m})=\beta^n ~(\mathrm{mod}~ \mathcal{O}_{K}^{\times})
    \ \ \ {\text {\rm and}}\ \ \ 
    (X+2\sqrt{-\varepsilon \ell^m})=\beta^n ~(\mathrm{mod}~ \mathcal{O}_{K}^{\times}).
\end{align*}
If $K$ does not have class number one, then we may pick $\beta\in\mathcal{O}_K$ such that $  N_{K/\mathbb{Q}}(\beta)=Y^{h_K}$ and consider $\beta^{{n}/{h_K}}$ instead.
This only applies when $\varepsilon=1, \ell=5$ and $m$ is odd, in which case  $h_{\Q(\sqrt{-5})}=2$. 
In these cases we let $\overline{\beta}$ denote the Galois conjugate of $\beta$. Finally, if $K=\Q$, then we may pick $\beta,\overline{\beta}\in\Z$  (abusing notation) such that $\beta\overline{\beta}=Y$ and $|{\beta}|\leq \sqrt{|Y|}.$
In each case, the algebraic integer $\beta$ is uniquely determined up to unit.

Given such a $\beta$, we construct a corresponding linear form in logarithms arising from $\beta/\overline{\beta}.$ 
For convenience, we denote the relevant fundamental units by
$w_{3}:=2+\sqrt{3}$ and $w_{5}:=1/2+\sqrt{5}/2$, and we denote the 6th root of unity by $w_{-3}:=1/2+\sqrt{-3}/2.$
By taking logarithms,  we obtain a triple of integers $0\leq j_4\leq 3, 0\leq j_6\leq 5,$ and $0\leq j_n<n-1,$ for which one of the
corresponding forms (depending on $\varepsilon, \ell$ and the parity of $m$), say
$\Lambda_{T^{\varepsilon}(\ell,m)}$ and  $\Lambda_{U^{\varepsilon}(m)},$ is given by
\begin{equation}
\Lambda_{T^{\varepsilon}(\ell,m)}:= \begin{cases} j_6\log({\overline{w}_{-3}}/ w_{-3})-n\log({\overline{\beta}}/\beta)+ki\pi \ \ \ \ \ &{\text {\rm if $\varepsilon=+, m$ odd, and $\ell=3$}},\\
j_4\log({\overline{i}}/ i)-n\log({\overline{\beta}}/\beta)+ki\pi \ \ \ \ \ &{\text {\rm if $\varepsilon=+, m$ even, and $\ell=3$}},\\
-(n/2)\log({\overline{\beta}}/\beta)+ki\pi \ \ \ \ \ &{\text {\rm if $\varepsilon=+, m$ odd, and $\ell=5$}},\\
j_4\log({\overline{i}}/ i)-n\log({\overline{\beta}}/\beta)+ki\pi \ \ \ \ \ &{\text {\rm if $\varepsilon=+, m$ even, and $\ell=5$}},\\
j_n\log(\overline{w}_{3}/w_{3})-n\log({\overline{\beta}}/\beta) \ \ \ \ \ &{\text {\rm if $\varepsilon=-, m$ odd, and $\ell=3$}},\\
-n\log(\overline{\beta}/\beta) \ \ \ \ \ &{\text {\rm if $\varepsilon=-, m$ even, and $\ell=3$}},\\
j_n\log(\overline{w}_{5}/w_{5})-n\log({\overline{\beta}}/\beta)  \ \ \ \ \ &{\text {\rm if $\varepsilon=-, m$ odd, and $\ell=5$}},\\
-n\log(\overline{\beta}/\beta) \ \ \ \ \ &{\text {\rm if $\varepsilon=-, m$ even, and $\ell=5$}},\\
\end{cases}
                    \end{equation}     
and
                      \begin{equation}
\Lambda_{U^{\varepsilon}(m)}:= \begin{cases}  -(n/2)\log({\overline{\beta}}/\beta)+ki\pi \ \ \ \ \ &{\text {\rm if $\varepsilon=+$ and $m$ odd}},\\
j_4\log({\overline{i}}/ i)-n\log({\overline{\beta}}/\beta)+ki\pi \ \ \ \ \ &{\text {\rm if $\varepsilon=+$ and $m$ even}},\\
j_n\log(\overline{w}_{5}/w_{5})-n\log({\overline{\beta}}/\beta) \ \ \ \ \ &{\text {\rm if $\varepsilon=-$ and $m$ odd}},\\
-n\log(\overline{\beta}/\beta) \ \ \ \ \ &{\text {\rm if $\varepsilon=-$ and $m$ even}},
                 \end{cases}
\end{equation}          
where $k\in\mathbb{Z}$ with $
|\Lambda_{T^{+}(\ell,m)}|,~|\Lambda_{U^{+}(m)}|<\pi$. 
The next lemma bounds these quantities.

\begin{lem}\label{lambdabound} Assuming the notation and hypotheses above, the following are true.

\noindent
(1)  If $n>2\log(4\sqrt{\ell^m})/\log |Y|$ and $(X,Y)$ is an integer point on (\ref{ell3}),  with $Y\not \in \{0,\pm 1\}$, then
\begin{align*}
    |\Lambda_{T^{\varepsilon}(\ell,m)}|\leq 2.78\cdot \frac{\sqrt{\ell^{m}}}{|Y|^{\frac{n}{2}}}.
\end{align*}

\noindent
(2) If $n>2\log(8\sqrt{5^m})/\log |Y|$, and $(X,Y)$ is an integer point on (\ref{ell5}), with $Y\neq 0$, then 
\begin{align*}
    |\Lambda_{U^{\varepsilon}(m)}|\leq 5.56\cdot \frac{\sqrt{5^{m}}}{|Y|^{\frac{n}{2}}}.
\end{align*}
\end{lem}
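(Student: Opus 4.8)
The plan is to prove both bounds by directly estimating the relevant linear form $\Lambda$, exploiting the fact that $\Lambda$ is a logarithm of a ratio of complex conjugates lying on the unit circle. First I would observe that in each case of the definition of $\Lambda_{T^{\varepsilon}(\ell,m)}$ and $\Lambda_{U^{\varepsilon}(m)}$, the form is (up to the integer multiple of $i\pi$ chosen to force $|\Lambda|<\pi$) the principal logarithm of a quotient $\zeta \cdot (\overline{\beta}/\beta)^{\pm n}$, where $\zeta$ is a root of unity or a power of a fundamental unit, and where $|\overline{\beta}/\beta|=1$ because $\beta$ and $\overline{\beta}$ are complex conjugates with $|\beta|=|\overline{\beta}|=\sqrt{|Y|}$ (or $\sqrt{|Y|^{h_K}}$ in the class number two case, which is why the $n/2$ appears for $\varepsilon=+,\ell=5,m$ odd). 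Thus $\Lambda=\log z$ for some $z$ on the unit circle with $|\Lambda|<\pi$, and the elementary inequality $|\log z|=|\arg z|\leq \tfrac{\pi}{2}|z-1|$ for $z$ on the unit circle reduces everything to bounding $|z-1|$.

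\textbf{Reducing to the factorization.}
The key step is then to rewrite $z-1$ in terms of the original Diophantine data. From the factorization $(X+\sqrt{-\varepsilon\ell^m})=\beta^n$ (mod units), the quantity $e^{\Lambda}=\overline{\beta^n}/\beta^n$ up to the unit factor equals $(X-\sqrt{-\varepsilon\ell^m})/(X+\sqrt{-\varepsilon\ell^m})$, so that
\begin{align*}
|e^{\Lambda}-1| = \left|\frac{(X-\sqrt{-\varepsilon\ell^m})-(X+\sqrt{-\varepsilon\ell^m})}{X+\sqrt{-\varepsilon\ell^m}}\right| = \frac{2\sqrt{\ell^m}}{|X+\sqrt{-\varepsilon\ell^m}|}=\frac{2\sqrt{\ell^m}}{|\beta|^n}=\frac{2\sqrt{\ell^m}}{|Y|^{n/2}},
\end{align*}
and the analogous computation with $2\sqrt{-\varepsilon\ell^m}$ in place of $\sqrt{-\varepsilon\ell^m}$ gives the factor $4\sqrt{5^m}$ for the $U$ case. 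Combining this with $|\Lambda|\leq\tfrac{\pi}{2}|e^{\Lambda}-1|$ yields the bounds $|\Lambda_{T^{\varepsilon}(\ell,m)}|\leq \pi\sqrt{\ell^m}/|Y|^{n/2}$ and $|\Lambda_{U^{\varepsilon}(m)}|\leq 2\pi\sqrt{5^m}/|Y|^{n/2}$; since $\pi<2.78$ and $2\pi<5.56$, the stated constants follow. The hypotheses $n>2\log(4\sqrt{\ell^m})/\log|Y|$ and $n>2\log(8\sqrt{5^m})/\log|Y|$ are exactly what guarantees $|e^{\Lambda}-1|<1$, hence that the chosen branch with $|\Lambda|<\pi$ is legitimate and that the linearization is valid.

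\textbf{The main obstacle.}
The delicate point is the bookkeeping of the unit and root-of-unity factors in each of the many cases, which is where the parameters $j_4,j_6,j_n$ and the $ki\pi$ term enter: one must check that after multiplying by the appropriate power of $w_3$, $w_5$, or the sixth root of unity $w_{-3}$ (and reducing the argument into $(-\pi,\pi]$), the resulting $z$ still differs from $1$ by the same quantity $2\sqrt{\ell^m}/|Y|^{n/2}$, so that the absolute-value estimate is unaffected by the choice of unit representative. In the even-$m$ cases over an imaginary quadratic field the sign $\varepsilon$ changes whether $\sqrt{-\varepsilon\ell^m}$ is real or imaginary, so I would treat the real-quadratic ($\varepsilon=-$) and imaginary-quadratic ($\varepsilon=+$) situations separately, and handle the $h_K=2$ exceptional case by replacing $n$ with $n/h_K=n/2$ throughout, which is precisely the source of the $-(n/2)\log(\overline{\beta}/\beta)$ terms. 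I expect the verification that $|\Lambda|<\pi$ holds under the stated lower bound on $n$ — rather than any deep input — to be the crux, since it is what makes the passage from $|e^{\Lambda}-1|$ to $|\Lambda|$ rigorous.
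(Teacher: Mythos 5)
Your architecture coincides with the paper's: both arguments hinge on the identity $e^{\Lambda}=\bigl(X-\sqrt{-\varepsilon\ell^m}\bigr)/\bigl(X+\sqrt{-\varepsilon\ell^m}\bigr)$ (up to unit bookkeeping), the resulting bound $|e^{\Lambda}-1|\leq 2\sqrt{\ell^m}/|Y|^{n/2}$ (resp.\ $4\sqrt{5^m}/|Y|^{n/2}$ in case (2)), and a linearization $|\Lambda|\leq C\,|e^{\Lambda}-1|$. But your final numerical step is false: $\pi\approx 3.1416>2.78$ and $2\pi>5.56$, so the bound $|\Lambda|\leq \pi\sqrt{\ell^m}/|Y|^{n/2}$ that you derive from $|\arg z|\leq\frac{\pi}{2}|z-1|$ does \emph{not} yield the lemma's constants. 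The worst-case factor $\pi/2$ on the unit circle is attained near $z=-1$, and your argument never exploits the hypothesis on $n$ quantitatively at the linearization step. The paper does: the hypothesis forces $|e^{\Lambda}-1|<1/2$ (since $|Y|^{n/2}>4\sqrt{\ell^m}$, resp.\ $|Y|^{n/2}>8\sqrt{5^m}$ against a numerator of $4\sqrt{5^m}$), and the inequality $|\log(1+z)|\leq 1.39\,|z|$, valid for all complex $|z|<1/2$, then gives exactly $2\cdot 1.39=2.78$ and $4\cdot 1.39=5.56$. Your route is repairable in the same spirit --- for $|z|=1$ and $|z-1|\leq 1/2$ one has $|\arg z|=2\arcsin(|z-1|/2)\leq 1.02\,|z-1|$ --- but as written the stated constants do not follow, and your weaker observation that the hypothesis guarantees $|e^{\Lambda}-1|<1$ does not suffice for any such sharpening.

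A secondary gap: your reduction ``$\Lambda=\log z$ with $z$ on the unit circle'' covers only the imaginary quadratic cases $\varepsilon=+$. When $\varepsilon=-$ the field $\Q(\sqrt{\ell^m})$ is real quadratic (or $K=\Q$ for $m$ even), $\overline{\beta}/\beta$ and the fundamental units $w_3,w_5$ are real, $\Lambda$ is a real linear form (no $ki\pi$ term appears in its definition in these cases), and $|\log z|=|\arg z|$ is meaningless; there you need a bound of the type $|\log(1+t)|\leq 1.39\,|t|$ for $|t|<1/2$, which is the single inequality the paper applies uniformly to all cases. Relatedly, in these real cases your asserted equality $|X+\sqrt{\ell^m}|=|Y|^{n/2}$ fails: only the product of the two conjugate factors equals $|Y|^{n}$, so one must place the larger factor in the denominator, which is why the paper's displayed estimate carries ``$\leq$'' rather than ``$=$''. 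These points, together with the false claim $\pi<2.78$, are what separate your sketch from a correct proof.
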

\begin{proof}
By the definition of $\Lambda_{T^{\varepsilon}(\ell,m)}$, we directly find that
\begin{align}\label{lambda1}
    |e^{\Lambda_{T^{\varepsilon}(\ell,m)}}-1|=
    \left|\frac{X+\sqrt{\pm\ell^m}}{X-\sqrt{\pm\ell^m}}-1\right|\leq\frac{2\sqrt{\ell^m}}{{|Y|^{\frac{n}{2}}}}.
\end{align}
For $|z|<1/2$, we note that
$|\log(1+z)|\leq 1.39\cdot |z|.$
Also, we note that the hypothesis on $n$ gives $|e^{\Lambda_{T^{\varepsilon}(\ell,m)}}-1|<1/2$.
Hence, we obtain (1), the claimed inequality
\begin{align*}
    |\Lambda_{T^{\varepsilon}(\ell,m)}|\leq 1.39\cdot |e^{\Lambda_{T^{\varepsilon}(\ell,m)}}-1|=2.78\cdot \frac{\sqrt{\ell^m}}{{|Y|^{\frac{n}{2}}}}.
\end{align*}
 The same method gives (2), after noting that $Y=\pm 1$  has no integer point on (\ref{ell5}). 
\end{proof}

\subsection{Proof of Theorem~\ref{Explicit35}}
For brevity, we only consider  when $\ell=3$ and $\varepsilon=-$,  as  the same method applies to all of the cases.
Suppose that there is an integer point $(X,Y)$ on $X^2+3^m=Y^n$.
Therefore, there is an integer $0\leq j_6\leq 5$ and an algebraic integer $\beta \in \Q(\sqrt{-3})$ for which
$N_{K/\Q}(\beta)=Y$ and
\begin{align*}
    (X+\sqrt{-3^m})=\frac{\beta^n}{w_{-3}^{j_6}}.
\end{align*}
In particular, if $m$ is odd, then we have
\begin{align*}
    \Lambda_{T^{\varepsilon}(\ell,m)}=j_6\log({\overline{w}_{-3}}/ w_{-3})-n\log({\overline{\beta}}/\beta)+ki\pi
    =j_6\log({\overline{w}_{-3}}/ w_{-3})-n\log({\overline{\beta}}/\beta)+k\log(-1).
\end{align*}
Since $\Lambda_{T^{\varepsilon}(\ell,m)}\neq 0,$ Theorem \ref{BW} implies that
\begin{align*}
    \log|\Lambda_{T^{\varepsilon}(\ell,m)}|>-C(3,2)h'(\overline{w}_{-3}/w_{-3})h'(\overline{\beta}/\beta)h'(-1)\log(\mathrm{max}\left\{e,j_6,n,|k|\right\}.
\end{align*}
Furthermore, by a short calculation, we get
\begin{displaymath}
\begin{split}
&h'(\overline{w}_{-3}/w_{-3})\leq\frac{\pi}{3},\\
 &h'(\overline{\beta}/\beta)\leq\mathrm{max}\left\{\log |Y|, \pi\right\}\\
 &h'(-1)\leq\frac{\pi}{2},~\mathrm{max}\left\{e,j_6,n,|k|\right\})\leq n+5.
\end{split}
\end{displaymath}
Therefore, Theorem~\ref{BW} implies that
\begin{align*}
     \log|\Lambda_{T^{\varepsilon}(\ell,m)}|&>-\frac{\pi^2}{6}C(3,2)\mathrm{max}\left\{\log |Y|, \pi\right\}\log(n+5).
\end{align*}
However, Lemma \ref{lambdabound} (1) gives
\begin{align*}
 \log(2.78\cdot\sqrt{3^m})-\frac{n}{2}\cdot {\color{black}\log |Y|} >\log|\Lambda_{T^{\varepsilon}(\ell,m)}|&>-\frac{\pi^3}{6}C(3,2)\log(n+5)\cdot {\color{black}\log |Y|},
\end{align*}
which in turn implies that
$$
    \log(2.78\cdot\sqrt{3^m})-\frac{n}{2}\log 2  >-\frac{\pi^3}{6}C(3,2)\sqrt{n+4}.
$$
Since we have $C(3,2)=18(4)!~3^4(6
4)^5\log(12),$ a direct calculation shows that we must have
\begin{align*}
    n\leq 1.6m+(60\sqrt{m}+5.9)\cdot 10^{30},
  \end{align*}
which gives a constant that is smaller than the claimed $M^{-}(3,m).$ Taking into account even $m$,
a similar calculation gives $n< 1.6m +(9.4\sqrt{m}+1.4)\cdot 10^{31}.$ The claimed $M^{-}(3,m)$ is
a ``rounded up'' version of the maximum of these two constants.

\subsection{Proof of Theorem~\ref{Power}} Suppose that $\ell^m$ is a power of an odd prime. Thanks to Theorem~\ref{LehmerVariantGeneral}, if $a_f(n)=\pm \ell^m,$
then $n=p^{d-1}$, where $p$ and $d\mid \ell(\ell^2-1)$ are odd primes. 
For each $d$, Lemma~\ref{DiophantineCriterion} gives an integer
point on an elliptic or hyperelliptic curve, or gives an integer solution to a Thue equation.

If $\ell =3$ (resp. $\ell=5$), then we find that the only possibility is $d=3$ (resp. $d=3, 5$).  This leads to the equations 
in Theorem~\ref{Explicit35}, which in turn gives the claimed bounds in these cases.
Turning to $\ell \geq 7$, we note for $d=3$ (resp. $5$) that one
can argue again as in the proof of Theorem~\ref{Explicit35} to conclude that $a_f(p^2)\neq \pm \ell^m$ 
(resp. $a_f(p^4)\neq \pm \ell^m$) for $f$ with (effectively) sufficiently large  weight $2k$.
For any $d\geq 7$, Lemma~\ref{DiophantineCriterion} (3)  gives the
integer solution $(X,Y)=(p^{2k-1}, a_f(p^2))$ to the Thue equation
$$
F_{d-1}(X,Y)=\pm \ell^m.
$$
As an implementation of Baker's theory of linear forms in logarithms, 
a well-known paper of Tzanakis and de Weger (see p. 103 of \cite{TW})  on Thue equations gives a method for effectively determining an
upper bound\footnote{The reader should switch the roles of $X$ and $Y$ when applying the discussion in \cite{TW}.}  for $|X|$ of any integer point satisfying $F_{d-1}(X,Y)=\pm \ell^m$, which in turn leads to an upper bound for the weight $2k$.  The linearity of these constants in $m$ aspect follows from the formal
taking of a logarithm in these Diophantine equations.

\newpage

\section{Appendix}

\begingroup
\setlength{\tabcolsep}{3pt} 
\renewcommand{\arraystretch}{1.5}
\begin{center} 
\begin{table}[!ht]
\begin{tabular}{|c|c|}
\multicolumn{2}{c}{} \\ \hline
$(A,B)$ & Defective $u_n(\alpha, \beta)$ \\ \hline \hline
\multirow{2}{3.5em}{$(\pm 1,2^1)$} & $u_5 = -1$, $u_7 = 7$, $u_8 = \mp 3$, $u_{12} = \pm 45$, \\ & $u_{13} = -1$, $u_{18} = \pm 85$, $u_{30} = \mp 24475$ \\ \hline
$(\pm 1,3^1)$ & $u_5 = 1$, $u_{12} = \pm 160$ \\ \hline
$(\pm 1, 5^1)$ & $u_7 = 1$, $u_{12} = \mp 3024$ \\ \hline
$(\pm 2, 3^1)$ & $u_3 = 1$, $u_{10} = \mp 22$ \\ \hline
$(\pm 2,7^1)$ & $u_8 = \mp 40$ \\ \hline
$(\pm 2, 11^1)$ & $u_5 = 5$ \\ \hline
$(\pm 4, 5^1)$ & $u_6=\pm 44$\\ \hline
$(\pm 5, 7^1)$ & $u_{10} = \mp 3725$ \\ \hline
$(\pm 3, 2^3)$ & $u_3 = 1$ \\ \hline
$(\pm 5, 2^3)$ & $u_6 = \pm 85$ \\ \hline
\end{tabular}
\medskip
\captionof{table}{\textit{Sporadic examples of defective $u_n(\alpha, \beta)$ satisfying (\ref{Modularity})}} 
\label{table1}
\end{table}
\end{center}
\endgroup

\noindent
The families of defective Lucas numbers satisfying (\ref{Modularity}) are given
 by the following curves.
\begin{equation}\label{Table2Curves}
\begin{split}
 \ \ \ \ \ \ &B_{1, k}^{r, \pm} : Y^2 = X^{2k-1} \pm 3^r, \ \ \ \   B_{2,k} : Y^2 = 2X^{2k-1} - 1, \ \ \ B_{3, k}^\pm : Y^2 = 2X^{2k-1} \pm 2, \\ \ \ B_{4,k}^r &: Y^2 =  3X^{2k-1} + (-2)^{r+2}, \ \ \  B_{5,k}^{\pm} : Y^2=3X^{2k-1} \pm 3, \ \ \  B_{6,k}^{r,\pm} : Y^2 = 3X^{2k-1} \pm 3 \cdot 2^r.
\end{split}
\end{equation}

\begingroup
\setlength{\tabcolsep}{3pt} 
\renewcommand{\arraystretch}{2.5}
\begin{center} 
\begin{table}[!ht]
\begin{small}
\begin{tabular}{|c|c|c|}
 \hline
$(A,B)$ & Defective $u_n(\alpha, \beta)$ & Constraints on parameters \\ \hline \hline
$(\pm m, p)$ & $u_3 = -1$ & $m>1$ and $p = m^2+1$ \\ \hline
\multirow{2}{*}{}
$(\pm m, p^{2k-1})$ &
$u_3 = \varepsilon 3^r$ &
$\begin{aligned} &\ \ \ \ \ \ (p, \pm m)\in B_{1, k}^{r, \varepsilon} \text{ with } 3\nmid m,\\ 
&(\varepsilon,r,m)\neq (1,1,2),
\text{ and } m^2 \geq 4\varepsilon 3^{r-1} \end{aligned}$\\ \hline
{\color{black}$(\pm m, p^{2k-1})$} & {\color{black}$u_4 = \mp m$} & {\color{black}$(p,\pm m) \in B_{2,k}$ with $m > 1$ odd} \\ \hline
$(\pm m, p^{2k-1})$ & $u_4 = \pm 2{\color{black}\varepsilon}m$ & $\begin{aligned}(p,\pm m)\in B_{3,k}^\varepsilon &\text{ with } {\color{black}(\varepsilon, m) \not = (1,2)}\\ &\text{  and } m > 2
\text{  even}
\end{aligned}$ \\ \hline
$(\pm m, p^{2k-1})$ & 
$u_6 = {\color{black}\pm (-2)^rm(2m^2+(-2)^r)/3}$ &
$\begin{aligned}(p, &\pm m)\in B_{4,k}^r \text{ with } \gcd(m,6) = 1, \\
&{\color{black}(r,m) \not = (1,1)}, \text{ and } {\color{black}m^2 \geq (-2)^{r+2}}\end{aligned}$ \\ \hline
$(\pm m, p^{2k-1})$ & ${\color{black}u_6=\pm \varepsilon m(2m^2+3\varepsilon)}$ & $(p,\pm m)\in B_{5,k}^\varepsilon$ with $3\mid m$ and $m>3$\\ \hline
$(\pm m, p^{2k-1})$ & $u_6 = \pm 2^{r+1}{\color{black}\varepsilon} m(m^2 + 3 {\color{black}\varepsilon} \cdot 2^{r-1}) $ &$\begin{aligned} (p, \pm m)\in B_{6,k}^{r,\varepsilon}
 \text{ with } m \equiv 3 \bmod{6} \\  \text{and } m^2 \geq 3 {\color{black}\varepsilon} \cdot 2^{r+2}\end{aligned}$ \\ \hline
\end{tabular}
\end{small}
\medskip
\captionof{table}{Parameterized families of defective $u_n(\alpha, \beta)$ satisfying (\ref{Modularity})
\label{table2}
\textit{\newline Notation: $m, k, r\in \Z^{+}$, $\varepsilon = \pm 1$, $p$ is a prime number.}}
\end{table}
\end{center}
\endgroup

\vskip.65in

\begingroup
\setlength{\tabcolsep}{5pt}
\renewcommand{\arraystretch}{2.5}
\begin{center}
\begin{table}[!ht]
\begin{tabular}{|c|l|} \hline
$(a_f(p),p^{2k-1})$ & \multicolumn{1}{c|}{$\widehat{\sigma}(p,m)$} \\[1.5ex] \hline \hline

\multirow{2}{*}{}
$(\pm 3, 2^3)$ & $\begin{aligned} 
\sigma_0(m+1) - 2 & \ \text{ when } 3|(m+1), \\
\sigma_0(m+1) - 1 & \ \text{ otherwise.} \\
\end{aligned}$ \\[1.5ex] \hline

\multirow{2}{*}{}
$(\pm 5, 2^3)$ & $\begin{aligned}
\sigma_0(m+1) - 2 & \ \text{ if } 6|(m+1), \\
\sigma_0(m+1) - 1 & \ \text{ otherwise}.
\end{aligned}$ \\[1.5ex] \hline

\multirow{2}{*}{}
$(\pm m, p^{2k-1})$ & $\begin{aligned}
\sigma_0(m+1) - 4 & \ \text{ if $(p, \pm m) \in S$,} \\
\sigma_0(m+1) - 1 & \ \text{ otherwise.}
\end{aligned}$ \\[1.5ex] \hline
\end{tabular}
\medskip
\captionof{table}{\textit{Lower bounds on $\Omega(a_f(p^m))$ in defective cases for weights $2k \geq 4$.}
\label{table3}
\textit{\newline Notation: $S$ is the collection of all points on any of $B_{1,k}^{r,\pm}, B_{2,k}, B_{3,k}^r, B_{4,k}, B_{5,k}^r$.}}
\end{table}
\end{center}
\endgroup


\bigskip

\begingroup
\setlength{\tabcolsep}{5pt} 
\renewcommand{\arraystretch}{1.9}
\begin{center} 
\begin{table}[!ht]
\begin{tabular}{|c|c|}
\multicolumn{2}{c}{} \\ \hline
$(d, D)$ & Integer Solutions to $F_{d-1}(X,Y)=D$  \\ \hline \hline
$(7, \pm 7)$ & $(\pm 1, \pm 4), ( \pm 2, \pm 1), (\mp 3, \mp 5)$\\ \hline
\multirow{2}{3.5em}{$(7,  \pm 13)$} & $( \pm 3,  \pm 10), (\pm 2,  \pm 7), ( \pm 3, \pm 4), (\pm 4, \pm 1),$ \\ & $( \pm 3,  \pm 1), (\mp 1, \pm 1), (\mp 2, \mp 5), (\mp 5,\mp 8), (\mp 7, \mp 11)$ \\ \hline
$(7, \pm 29)$ & $\begin{aligned}(\mp 6, \mp 1), (\mp 5, &\mp 16), (\mp 4, \mp 7), (\pm 1, \pm 5), \\&(\pm 3, \pm 2), (\pm 11, \pm 17)\end{aligned}$\\ \hline
$(11, \pm 11), (19, \pm 19),$  &\multirow{2}{3.5em}{$( \pm 1,\pm 4)$}\\ 
$(23, \pm 23), (31, \pm 31)$ &\\ \hline
$(11, \pm 23)$ & $( \pm 3, \pm 2), ( \pm 2, \pm 1), (\mp 2,\mp 3)$\\ \hline
$(13, 13), (17, 17), (29, 29), (37,37)$ & $(-1, -4), (1,4)$\\ \hline
$(13, -13),  (17, -17),$  & \multirow{2}{3.5em}{$\varnothing$}\\ 
$(29, -29), (37,-37)$ &\\ \hline
$(19, \pm 37)$ & $(\mp 2, \mp 5)$ \\ \hline
\end{tabular}
\smallskip
\captionof{table}{\textit{Solutions for the Thue equations where $D=\pm \ell$ and $7\leq \ell \leq 37$} }
\label{thuetable}
\end{table}
\end{center}
\endgroup


\begingroup
\setlength{\tabcolsep}{5pt} 
\renewcommand{\arraystretch}{1.9}
\begin{center} 
\begin{table}[!ht]
\begin{tabular}{|c|c|}
\multicolumn{2}{c}{} \\ \hline
$(d, D)$ & Integer Solutions to $F_{d-1}(X,Y)=D$  \\ \hline \hline
$(7, \pm 41) $ & $(\mp 3, \mp 7), (\mp 1, \pm 2), (\pm 4, \pm 5)  $\\ \hline
$\begin{aligned}&(41, 41), (53, 53), (61,61), \\
&(73, 73), (89,89), (97, 97)\end{aligned}$ & $(-1, -4), (1,4)$ \\ \hline
$(41, -41), (23, \pm 47), (13, 53), (53,-53), (29, \pm 59),$ &  \multirow{3}{3.5em}{$\varnothing$} \\ 
$(31, \pm 61), (61, -61), (17,-67), (37, \pm 73), (73,-73),$ & \\
$(13,-79), (41, \pm 83), (89,-89), (97,-97)$ & \\   \hline
$(7,  \pm 43)$ & $(\mp 3, \mp 8), (\mp 2, \pm 1), (\pm 5, \pm 7) $\\ \hline
$(11, \pm 43)$ & $(\mp 3, \mp 5), (\pm 2, \pm 5)$\\ \hline
$(43, \pm 43), (47, \pm 47), (59, \pm 59), (67,\pm 67),$ &  \multirow{2}{3.5em}{$( \pm 1,\pm 4)$}\\
$(71, \pm 71), (79, \pm 79), (83, \pm 83)$ &  \\ \hline
$(13, -53), (17,67)$ & $ (-2,-3), (2, 3)$\\ \hline
$(11, \pm 67)$ & $(\mp 7, \mp 12), (\mp 3, \mp 11), (\mp 2, \mp 7)$ \\ \hline

\multirow{2}{3.5em}{$(7,\pm 71)$} & $(\mp 16, \mp 25), (\mp 5, \mp 9), (\pm 1, \pm 6),$\\ & $(\pm 4, \pm 3), (\pm 7, \pm 23), (\pm 9, \pm 2)$\\ \hline
$(13, 79)$ & $(-2,-5),(2,5)$\\ \hline

\multirow{2}{3.5em}{$(7,\pm 83)$} & $(\mp 8, \mp 13), (\mp 7, \mp 1), (\mp 6, \mp 19),$\\
&  $(\pm 3, \pm 11), (\pm 5, \pm 2), (\pm 13, \pm 20)$\\ \hline
$(11, \pm 89)$ & $(\mp 1, \pm 1) $\\ \hline
$(7,\pm 97)$ & $(\mp 4, \mp 11), (\mp 3, \pm 1), (\pm 7, \pm 10)$\\ \hline

\end{tabular}
\medskip
\captionof{table}{\textit{Solutions (with GRH) to the Thue equations where $D=\pm \ell$ and $41\leq \ell \leq 97$} }
\label{thueGRHtable}
\end{table}
\end{center}
\endgroup

\bigskip

\begingroup
\setlength{\tabcolsep}{3pt} 
\renewcommand{\arraystretch}{1.9}
\begin{center}
\begin{table}
\begin{tabular}{|c|c|c|c|c|c|cl}
 \hline
$\ell$ & {\text {\rm $C_{2,\ell}^{+}$}} & {\text {\rm $C_{3,\ell}^{+}$}} & {\text {\rm $C_{4,\ell}^{+}$}}  & {\text {\rm $C_{6,\ell}^{+}$}} & {\text {\rm $C_{7,\ell}^{+}$}}\\  \hline \hline
$3$ & $(1,\pm 2)$ & $(1,\pm 2)$ & $(1,\pm 2)$  & $(1, \pm 2)$ & $(1,\pm 2)$  \\ \hline
$5$ & $(-1,\pm 2)$ & $(-1, \pm 2)$ & $(-1,\pm 2)$  &$(-1, \pm 2)$ & $(-1.\pm 2)$ \\ \hline
$\begin{aligned}7, &23, 29, 47,
 53,\\  &59, 61, 67, 83
 \end{aligned} $ & $\varnothing$ & $\varnothing$ & $\varnothing$  & $\varnothing$ & $\varnothing$ \\ \hline
$11$ & $\varnothing$ & $(5,\pm 56)$ & $\varnothing$  & $\varnothing$ & $\varnothing$ \\ \hline
$13$ & $\varnothing$ & $(3,\pm 16)$ & $\varnothing$  & $\varnothing$  & $\varnothing$ \\ \hline
$17$ & $\begin{aligned} &(-2,\pm 3), (-1,\pm 4), (2,\pm 5), &\\ &(4,\pm 9), (8, \pm 23) 
(43, \pm 282), \\ &(52, \pm 375), (5234, \pm 378661)\end{aligned}$ & $(-1,\pm 4)$ & $(-1,\pm 4)$  & $(-1,\pm 4)$ & $(-1,\pm 4)$ \\ \hline
$19$ & $(5, \pm 12)$ & $\varnothing$ & $\varnothing$  & $\varnothing$ & $\varnothing$ \\ \hline
$31$ & $(-3,\pm 2)$  & $\varnothing$ & $\varnothing$ & $\varnothing$ & $\varnothing$\\ \hline
$37$ & $\begin{aligned} (-1,\pm 6), (3, \pm 8),\\ (243,\pm 3788) \end{aligned}$  & $(-1,\pm 6), (27, \pm 3788)$ & $(-1,\pm 6)$ & $(-1,\pm 6)$ & $(-1,\pm 6)$\\ \hline
$41$ & $(2,\pm 7)$   & $(-2,\pm 3)$ & $(2,\pm 13)$ & $\varnothing$ & $\varnothing$\\ \hline
$43$ & $(-3,\pm 4) $  & $\varnothing$ & $\varnothing$ & $\varnothing$ & $\varnothing$\\ \hline
$71$ & $(5, \pm 14)$  & $\varnothing$ & $\varnothing$ & $\varnothing$ & $\varnothing$\\ \hline
$73$ & $\begin{aligned}(-4,\pm 3), (2,\pm 9), \\ (3,\pm 10),
 (6,\pm 17),\\  (72,\pm 611), (356, \pm 6717)\end{aligned}$  & $\varnothing$ & $\varnothing$ & $\varnothing$ & $\varnothing$\\ \hline
$79$ & $(45,\pm 302) $  & $\varnothing$ & $\varnothing$ & $\varnothing$ & $\varnothing$\\ \hline
$89$ & $\begin{aligned}(-4, \pm 5), (-2, \pm 9), \\
(10,\pm 33), (55,\pm 408)\end{aligned}$  & $(2,\pm 11)$ & $\varnothing$ & $\varnothing$ & $\varnothing$\\ \hline
$97$ & $\varnothing $  & $\varnothing$ & $(2,\pm 15)$ & $\varnothing$ & $\varnothing$\\ \hline
\end{tabular}
\medskip
\captionof{table}{\textit{Integer points on $C_{d,\ell}^{+}$}}
\label{Cplustable}
\end{table}
\end{center}
\endgroup

\begingroup
\setlength{\tabcolsep}{3pt} 
\renewcommand{\arraystretch}{1.9}
\begin{center}
\begin{table}
\begin{tabular}{|c|c|c|c|c|c|}
 \hline
$\ell$ & {\text {\rm $C_{2,\ell}^{-}$}} & {\text {\rm $C_{3,\ell}^{-}$}} & {\text {\rm $C_{4,\ell}^{-}$}}  & {\text {\rm $C_{6,\ell}^{-}$}} & {\text {\rm $C_{7,\ell}^{-}$}}\\  \hline \hline
$\begin{aligned} &\ \ 3, 5, 17, 29, 37,\\  &41, 43,  59, 73, 97
\end{aligned} $&  $\varnothing$ & $\varnothing$ & $\varnothing$  & $\varnothing$  & $\varnothing$\\ \hline
$7$ & $(2,\pm 1), \ (32,\pm 181)$ & $(2,\pm 5), \ (8,\pm 181)$ & $(2, \pm 11)$ &  $\varnothing$ & $\varnothing$ \\ \hline
$11$ & $(3,\pm 4), \ (15, \pm 58)$ & $\varnothing$ & $\varnothing$  & $\varnothing$ & $\varnothing$ \\ \hline
$13$ & $(17, \pm 70)$ & $\varnothing$ & $\varnothing$  & $\varnothing$ & $\varnothing$ \\ \hline
$19$ & $(7, \pm 18)$ & $(55, \pm 22434)$ & $\varnothing$  & $\varnothing$  & $\varnothing$ \\ \hline
$23$& $(3,\pm 2)$ & $(2,\pm 3)$ & $\varnothing$  & $(2,\pm 45)$ & $\varnothing$\\ \hline
$31$ & $\varnothing$  & $(2,\pm 1)$  & $\varnothing$ & $\varnothing$ & $\varnothing$ \\ \hline
$47$ & $(6, \pm 13), (12, \pm 41), (63, \pm 500) $  & $(3, \pm 14)$ & $(2,\pm 9)$ & $\varnothing$ & $\varnothing$\\ \hline
$53$ & $(9,\pm 26), (29, \pm 156)$  & $\varnothing$ & $\varnothing$ & $\varnothing$ & $\varnothing$\\ \hline
$61$ & $(5,\pm 8)$   & $\varnothing$ & $\varnothing$ & $\varnothing$ & $\varnothing$\\ \hline
$67$ & $(23, \pm 110)$  & $\varnothing$ & $\varnothing$ & $\varnothing$ & $\varnothing$\\ \hline
$71$ & $(8,\pm 21)$  & $\varnothing$ & $(3,\pm 46)$ & $\varnothing$ & $\varnothing$\\ \hline
$79$ & $(20, \pm 89)$  & $\varnothing$ & $(2,\pm 7)$ & $\varnothing$ & $\varnothing$\\ \hline
$83$ & $(27, \pm 140)$  & $\varnothing$ & $\varnothing$ & $\varnothing$ & $\varnothing$\\ \hline
$89$ & $(5,\pm 6) $  & $\varnothing$ & $\varnothing$ & $\varnothing$ & $\varnothing$\\ \hline
\end{tabular}
\medskip
\captionof{table}{\textit{Integer points on $C_{d, \ell}^{-}$}}
\label{Cminustable}
\end{table}
\end{center}
\endgroup

\bigskip

\begingroup
\setlength{\tabcolsep}{5pt} 
\renewcommand{\arraystretch}{1.6}
\begin{center}
\begin{table}
\begin{tabular}{|c|c|c|c|c|c|c|c|c|c|}
 \hline
$\ell$ & {\text {\rm $H_{3,\ell}^{-}$}} & {\text {\rm $H_{3,\ell}^{+}$}} & {\text {\rm $H_{5,\ell}^{-}$}}  & {\text {\rm $H_{5,\ell}^{+}$}}  & {\text {\rm $H_{7,\ell}^{-}$}}  & {\text {\rm $H_{7,\ell}^{+}$}}    & {\text {\rm $H_{11,\ell}^{-}$}}   & {\text {\rm $H_{13,\ell}^{-}$}}  \\  \hline \hline
$11$ & $\varnothing$ & $(1,7), (7, 767)$ & $\varnothing$  & $(1,7)$ & $\varnothing$ & $(1,7)$ & $\varnothing_*$   &$\varnothing$  \\ \hline
$19$ & $\varnothing$ & $(1,9), (3,61)$ & $\varnothing$  & $(1,9)$ & $\varnothing$ & $(1,9)$ & $\varnothing$ &$\varnothing_*$   \\ \hline
$29$ & $\varnothing$ & $(1,11)$ & $\varnothing$  & $(1,11)$ & $\varnothing$ & $(1,11)$ & $\varnothing_*$ & $\varnothing_*$   \\ \hline
$31$ & $(2,14)$ & $\varnothing$ & $\varnothing$  & $\varnothing$ & $(2,286)$ & $\varnothing$ & $\varnothing_*$   &$\varnothing_*$  \\ \hline
$41$ & $(3,59)$ & $(1,13), (2,22)$ & $\varnothing$  & $(1,13)$ & $\varnothing$ & $(1,13)_*$ & $\varnothing_*$  & $\varnothing_*$  \\ \hline
$59$ & $\varnothing$ & $\varnothing$ & $\varnothing$ &$\varnothing_*$ & $\varnothing$ & $\varnothing_*$ & $\varnothing_*$  &$\varnothing_*$  \\ \hline
$61$ & $\varnothing$ & $\varnothing$ & $\varnothing$  & $\varnothing$ & $\varnothing$ & $\varnothing_*$ & $\varnothing_*$  &$\varnothing_*$  \\ \hline
$71$ & $(2,6), (5, 279)$ & $(1,17)$ & $\varnothing$ & $(1,17)$ & $\varnothing$ & ${\text {\rm {\bf ?}}}$ & $\varnothing_*$   &$\varnothing_*$  \\ \hline
$79$ & $(2,2), (4,142)$ & $\varnothing$ & $\varnothing$  & $\varnothing$ & $\varnothing$ & $\varnothing_*$ & $\varnothing_*$   &$\varnothing_*$  \\ \hline
$89$ & $\varnothing$ & $(1,19), (2,26)$ & $\varnothing$  & $(1,19)_*$, $(2,74)_*$ & $\varnothing$ & $(1,19)_*$ & $\varnothing_*$  & ${\text {\rm {\bf ?}}}$  \\ \hline
\end{tabular}
\medskip
\captionof{table}{\textit{$(|X|, |Y|)$ for integer points on $H_{d,\ell}^{\pm}$ with $\leg{\ell}{5}=1$. \newline (note. GRH assumption indicated by $_*$.)}}
\label{Htable}
\end{table}
\end{center}
\endgroup

\end{document}